\newif\ifdraft \drafttrue
\PassOptionsToPackage{dvipsnames}{xcolor}
\documentclass[onefignum,onetabnum,a4paper,final]{siamart190516}

\usepackage[utf8]{inputenc}
\usepackage[T1]{fontenc}
\usepackage{amssymb}
\usepackage{amsfonts}
\usepackage{amsmath}
\usepackage{graphicx}
\usepackage{tikz}
\usepackage{pgfplots}
\usepackage{subfig} 
\usepackage{textcomp}
\usepackage{mathtools}
\usepackage{geometry}
\usepackage{tgpagella}
\usepackage{comment}
\usepackage[shortlabels]{enumitem}

\usepackage{moreverb}
\usepackage{aligned-overset}
\usepackage[most]{tcolorbox}

\numberwithin{equation}{section}

\newcommand{\parperp}{\vDash}
\newcommand{\R}{\mathbb{R}}
\newcommand{\Rpos}{\mathbb{R}_{+}}
\newcommand{\N}{\mathbb{N}}

\newcommand{\A}{\mathcal{A}}

\newcommand{\tUbeta}{\widetilde{U}^{(\beta)}}

\newcommand{\tN}{\widetilde{N}}
\newcommand{\tNp}{\widetilde{N}^{(p)}}
\newcommand{\tNpmone}{\widetilde{N}^{(p-1)}}
\newcommand{\tNppar}{\widetilde{N}^{(\ppar)}}
\newcommand{\tNpperp}{\widetilde{N}^{(\pperp)}}

\newcommand{\tFbeta}{\widetilde{F}^{(\beta)}}
\newcommand{\tfbeta}{\widetilde{f}^{(\beta)}}

\newcommand{\Upar}{U_{\parallel}}

\newcommand{\Uparbeta}{U_{\parallel}^{(\beta)}}

\newcommand{\Fparbeta}{F_{\parallel}^{(\beta)}}

\newcommand{\fparbeta}{f_{\parallel}^{(\beta)}}

\newcommand{\ppar}{p_{\parallel}}

\newcommand{\pperp}{p_{\perp}}

\newcommand{\Cloctwo}{C_{\mathrm{loc},2}}
\newcommand{\Creg}{C_{\mathrm{reg}}}

\newcommand{\omegaeps}{\xi}

\newcommand{\omegav}{\omega_{\mathbf{v}}}

\newcommand{\omegave}{\omega_{\mathbf{ve}}}
\newcommand{\omegavf}{\omega_{\mathbf{vf}}}
\newcommand{\omegavef}{\omega_{\mathbf{vef}}}
\newcommand{\omegaef}{\omega_{\mathbf{ef}}}
\newcommand{\tomegaef}{\widetilde{\omega}_{\mathbf{ef}}}

\newcommand{\omegae}{\omega_{\mathbf{e}}}
\newcommand{\omegaf}{\omega_{\mathbf{f}}}

\newcommand{\eremk}{\hbox{}\hfill\rule{0.8ex}{0.8ex}}
\newcommand{\tbeta}{\widetilde{\beta}}
\newcommand{\tbetam}{|\widetilde{\beta}|}

\newcommand{\rv}{r_\mathbf{v}}
\newcommand{\re}{r_\mathbf{e}}
\newcommand{\rf}{r_\mathbf{f}}
\newcommand{\rhove}{\rho_\mathbf{ve}}
\newcommand{\rhoef}{\rho_\mathbf{ef}}
\DeclareMathOperator{\dist}{dist}
\DeclareMathOperator{\card}{card}
\DeclareMathOperator{\tr}{tr}

\newsiamremark{remark}{Remark}

\newcommand{\abs}[1]{\left\vert #1 \right\vert}
\newcommand{\skp}[1]{\left< #1 \right>}
\newcommand{\norm}[1]{\left\| #1 \right\|}
\newcommand{\ra}[0]{\rightarrow}

\renewcommand{\div}{\operatorname*{div}}
\newcommand{\supp}{\operatorname*{supp}}

\newcommand{\deta}{\partial_x^\eta}
\newcommand{\etam}{|\eta|}
\newcommand{\dbeta}{\partial_x^\beta}
\newcommand{\betam}{|\beta|}

\newcommand{\CacInt}{C_{\mathrm{int}}}

\newcommand{\Doneparf}{D_{\mathbf{f}_{1, \parallel}}}
\newcommand{\Dtwoparf}{D_{\mathbf{f}_{2, \parallel}}}
\newcommand{\Doneperpe}{D_{\mathbf{e}_{1, \perp}}}
\newcommand{\Dtwoperpe}{D_{\mathbf{e}_{2, \perp}}}
\newcommand{\Dparf}{D_{\mathbf{f}_{\parallel}}}
\newcommand{\Dparg}{D_{\gpar}}
\newcommand{\Dperpg}{D_{\gperp}}
\newcommand{\Dparperpg}{D_{\gparperp}}
\newcommand{\Dpars}{D_{\mathbf{s}_{\parallel}}}

\newcommand{\Dperpf}{D_{\mathbf{f}_\perp}}
\newcommand{\Dpare}{D_{\mathbf{e}_\parallel}}
\newcommand{\Dperpe}{D_{\mathbf{e}_\perp}}
\newcommand{\fipar}{\mathbf{f}_{i, \parallel}}
\newcommand{\fperp}{\mathbf{f}_{\perp}}

\newcommand{\eiperp}{\mathbf{e}_{i, \perp}}
\newcommand{\epar}{\mathbf{e}_{\parallel}}
\newcommand{\gpar}{\mathbf{g}_{\parallel}}
\newcommand{\gparperp}{\mathbf{g}_{\parperp}}
\newcommand{\gperp}{\mathbf{g}_{\perp}}
\newcommand{\bfe}{\mathbf{e}}
\newcommand{\bfv}{\mathbf{v}}
\newcommand{\bff}{\mathbf{f}}
\newcommand{\bfs}{\mathbf{s}}

\newcommand{\cV}{\mathcal{V}}
\newcommand{\cF}{\mathcal{F}}
\newcommand{\cE}{\mathcal{E}}
\newcommand{\cB}{\mathcal{B}}
\newcommand{\hcB}{\widehat{\mathcal{B}}}

\newcommand{\hc}{\hat{c}}
\newcommand{\hR}{\widehat{R}}
\newcommand{\hB}{\widehat{B}}
\newcommand{\hH}{\widehat{H}}

\newcommand{\betaperp}{\beta_{\perp}}
\newcommand{\tbetaperp}{\widetilde{\beta}_{\perp}}
\newcommand{\betapar}{\beta_{\parallel}}
\newcommand{\tbetapar}{\widetilde{\beta}_{\parallel}}

\newcommand{\betaparperp}{\beta_{\parperp}}
\newcommand{\tbetaparperp}{\widetilde{\beta}_{\parperp}}
\newcommand{\pparperp}{p_{\vDash}}
\newcommand{\boundY}{\mathcal{Y}}

\newcommand{\tR}{\widetilde{R}}
\newcommand{\tc}{\tilde{c}}
\newcommand{\hW}{\widehat{W}}
\newcommand{\BL}{\mathrm{BL}}
\newcommand{\BLzero}{\mathrm{BL}^1_{\alpha, 0, \Omega}}



\title{Weighted analytic regularity for the\\ integral fractional Laplacian in polyhedra}
\author{%
  Markus Faustmann%
  \thanks{%
    Institut für Analysis und Scientific Computing, TU Wien, A-1040 Wien, Austria%
  }%
  \and 
  Carlo Marcati%
  \thanks{Dipartimento di Matematica ``F. Casorati'', Universit{\`a} di Pavia,
    I-27100 Pavia, Italy%
      }%
  \and
  Jens Markus Melenk\footnotemark[1]%
  \and
  Christoph Schwab%
  \thanks{Seminar for Applied Mathematics, ETH Zurich, CH-8092 Zürich, Switzerland%
  \funding{The research of JMM is funded by the Austrian Science Fund (FWF) by
      the special research program {\it Taming complexity in PDE systems} (grant
      SFB F65). 
      }%
}
}
\begin{document}

\maketitle

\begin{abstract}
On polytopal domains in $\mathbb{R}^3$, we prove weighted analytic regularity of solutions to the Dirichlet problem for the integral fractional Laplacian with analytic right-hand side. Employing the Caffarelli-Silvestre extension allows to localize the problem and to decompose the regularity estimates into results on vertex, edge, face, vertex-edge, vertex-face, edge-face and vertex-edge-face neighborhoods of the boundary. Using tangential differentiability of the extended solutions, a bootstrapping argument based on Caccioppoli inequalities on dyadic decompositions of the neighborhoods provides weighted, analytic control of higher order solution derivatives.
\end{abstract}
\begin{keyword}
  fractional Laplacian, analytic regularity, corner domains, weighted Sobolev spaces
\end{keyword}
\begin{AMS}
 26A33, 35A20, 35B45, 35J70, 35R11. 
\end{AMS}
\tableofcontents
\section{Introduction}
\label{sec:Intro}
On a bounded, polytopal domain $\Omega \subset \R^3$ with Lipschitz boundary $\partial\Omega$
comprising (the closure of) a finite union of plane, open polygons,
we consider the Dirichlet problem for the integral fractional Laplacian
\begin{equation}
  \label{eq:intro-eq}
  (-\Delta)^s u = f \text{ on }\Omega, \qquad u = 0 \;\text{ on }\; \R^d \setminus \overline{\Omega},
\end{equation}
with $0<s<1$, subject to a 
source term $f$ that is analytic in $\overline{\Omega}$.

As solutions to fractional PDEs typically exhibit a singular behaviour close to the whole boundary $\partial \Omega$ of the domain, the aim of this article is to capture this singular behaviour in Sobolev scales by introducing certain weight functions, which are powers of distances to vertices, edges or faces of the polytope and vanish on $\partial \Omega$. As such, we derive weighted analytic-type estimates for the variational solution $u$ in $\Omega$, which also extends the analysis of our previous work \cite{FMMS21} (on 2D polygons) to the 3D-case.

Our analysis will, as in the two-dimensional setting \cite{FMMS21},
be based on \emph{localization of \eqref{eq:intro-eq} 
through a local, divergence form, elliptic degenerate operator in dimension $4$}. 
Furthermore, the proof technique initiated in \cite{FMMS22_999,FMMS21} will also be used here:
we establish a base regularity shift of the variational solutions in $\Omega$ via the difference-quotient
technique due to Savar\'{e} \cite{Savare}, rather than by localization and Mellin-analysis as is customary
in the regularity analysis of elliptic PDEs in corner domains (see, e.g., \cite{MR162} and the references there).
This allows, largely building upon the general results in \cite{Savare,FMMS21}, 
for a more succinct proof
of a small regularity shift in fractional order, non-weighted Sobolev spaces.
Subsequently,
this regularity is inductively bootstrapped to arbitrary order of regularity via
local regularity estimates of Caccioppoli type on appropriately scaled balls 
in a Besicovitch covering of the domain. 
These local, analytic regularity estimates are subsequently assembled into 
a-priori bounds in weighted Sobolev spaces, with corner-, edge- and face-weight functions.

While structurally similar to our analysis of the two-dimensional case \cite{FMMS21}, the 
analysis in polyhedral domains brings additional technical difficulties: the coverings and local
regularity estimates exhibit a certain ``recursive by dimension of the singular set'' structure, 
reminiscent to the ``singular chains'' of M. Dauge in the analysis of the singularities of the 
Laplacean in polytopal domain in $\mathbb{R}^d$ for general dimension $d\geq 2$ in \cite{DaugeLNM}.
\subsection{Relation to previous work}
\label{sec:RelPrev}
As mentioned, 
the present analysis extends our work \cite{FMMS21} to polyhedral domains in $\mathbb{R}^3$, 
thereby being the first analytic regularity results for the integral fractional Laplacian 
in three space dimensions. 

Previous, recent work \cite{BN21} establishes essentially optimal finite regularity shifts in (non-weighted)
Besov spaces in general Lipschitz domains $\Omega\subset \mathbb{R}^d$ in arbitrary dimension $d\geq 2$, 
which are also applicable in the presently considered case. As compared with \cite{BN21}, we consider
a more restricted geometric setting of Lipschitz polyhedra $\Omega \subset \mathbb{R}^3$ with a finite
number of faces. As in \cite{BN21} and in the two-dimensional case \cite{FMMS21} we build the base
regularity shift on the techniques of Savare \cite{Savare}. To obtain the analytic regularity shifts,
however, we then employ coverings and local Caccioppoli-type estimates with inductive bootstrapping.
This is distinct from the analysis in \cite{BabGuo3dII,BabGuoI}, which is based on inductive bootstrapping
in finite-order, corner-weighted spaces of Kondrat'ev type. 
As in \cite{FMMS21}, we develop this regularity analysis for 
the four-dimensional, singular \emph{local} elliptic divergence-form PDE 
related to \eqref{eq:intro-eq} which was developed in \cite{caffarelli-stinga16}
and the references there.
%
\subsection{Impact on numerical methods}
\label{sec:ImpNum}
As is customary in the analysis of finite element methods (FEM) and 
in other recent works (e.g. \cite{borthagaray2022fractional} and the references there),
sharp regularity for variational solutions of \eqref{eq:intro-eq} will imply corresponding
convergence rate estimates of Galerkin approximations. 
Similar to the two-dimensional case, where analytic regularity of solutions to \eqref{eq:intro-eq} on bounded, polygonal domains $\Omega$, 
obtained in \cite{FMMS21}, implied exponential convergence bounds for corresponding
$hp$ FE Galerkin approximations in \cite{FMMS-hp},
the weighted analytic regularity estimates obtained in the present paper
form the foundation for proving \emph{exponential rates of convergence} 
of suitable families of $hp$-FEM in polyhedral domains $\Omega$ in a forthcoming work. 
\subsection{Structure of this text}
\label{sec:Struct}
Upon fixing some notation in the next subsection, we establish
the variational formulation of~\eqref{eq:intro-eq} in Section \ref{sec:Set}.
We also introduce the scales of boundary-, edge- and vertex-weighted Sobolev spaces 
in which we subsequently will establish analytic regularity shifts.
In Section~\ref{sec:StatMainRes}, we state our main regularity result, 
Theorem~\ref{thm:mainresult}. 
The proof of this theorem is developed in the remaining part of the paper.
Section~\ref{sec:RegExt} recapitulates a global regularity shift and 
localized interior regularity estimates for the extension problem,
which were proved in \cite{FMMS21}.
In Section~\ref{sec:LocTgReg},
local regularity for various tangential derivatives of the solution of the extension problem, 
in a vicinity of (smooth parts of) the boundary will be considered.
While the mathematical structure of the proofs is identical to the polygonal case in \cite{FMMS21},
the number of cases to be distinguished is larger than in the polygonal case: singular sets 
now have either dimension zero (vertices $\mathbf{v}$), one (edges $\mathbf{e}$) or two 
(faces $\mathbf{f}$). 
A somewhat larger number of combined cases (listed in Section~\ref{sec:PartOmega})
needs to be discussed item by item. 
These localized estimates are combined in Section~\ref{sec:WghHpPolygon} 
with covering arguments and scaling to establish the weighted analytic regularity.
Section~\ref{sec:Concl} gives a summary of our main results.
Appendix~\ref{sec:AppA} develops some elementary estimates related to fractional
norms, which are used in some of the arguments in the main text.
\subsection{Notation}
\label{sec:Nota}
The notation used here is largely consistent with our analysis in the polygonal setting
in \cite{FMMS21}. 
For open $\omega \subseteq \R^d$ and $t \in \N_0$, the spaces $H^t(\omega)$ 
are the classical Sobolev spaces of order $t$. For $t \in (0,1)$, 
fractional order Sobolev spaces are given in terms of the Aronstein-Slobodeckij seminorm 
$|\cdot|_{H^t(\omega)}$ and the full norm $\|\cdot\|_{H^t(\omega)}$ by 
\begin{align}
\label{eq:norm} 
|v|^2_{H^t(\omega)} = \int_{x \in \omega} \int_{z \in \omega} \frac{|v(x) - v(z)|^2}{\abs{x-z}^{d+2t}}
\,dz\,dx, 
\qquad \|v\|^2_{H^t(\omega)} = \|v\|^2_{L^2(\omega)} + |v|^2_{H^t(\omega)}, 
\end{align}
where we denote the Euclidean norm in $\R^d$ by $\abs{\;\cdot\;}$. 

For bounded Lipschitz domains $\Omega \subset \R^d$ and $t \in (0,1)$,  we additionally introduce  
\begin{align*}
\widetilde{H}^{t}(\Omega) \coloneqq \left\{u \in H^t(\R^d) \,: \, u\equiv 0 \; \text{on} \; \R^d \backslash \overline{\Omega} \right\}, 
\quad  \norm{v}_{\widetilde{H}^{t}(\Omega)}^2 \coloneqq \norm{v}_{H^t(\Omega)}^2 + \norm{v/r^t_{\partial\Omega}}_{L^2(\Omega)}^2,
\end{align*}
where $r_{\partial\Omega}(x)\coloneqq\operatorname{dist}(x,\partial\Omega)$ 
denotes the Euclidean distance of a point $x \in \Omega$ from the boundary $\partial \Omega$.
On $\widetilde{H}^t(\Omega)$ we have,  
by combining \cite[Lemma~{1.3.2.6}]{Grisvard} and \cite[Proposition~2.3]{acosta-borthagaray17}, the estimate 
\begin{equation}
\label{eq:Htildet-vs-HtRd}
\forall u \in \widetilde{H}^t(\Omega) \colon \quad 
\|u\|_{\widetilde{H}^t(\Omega)} \leq C |\tilde{u}|_{H^t(\R^d)} 
\end{equation}
for some $C > 0$ depending only on $t$ and $\Omega$. 
For $t \in (0,1)\backslash \{\frac 1 2\}$, 
the norms $\norm{\cdot}_{\widetilde{H}^{t}(\Omega)}$  
and $\norm{\cdot}_{H^{t}(\Omega)}$ are equivalent on $\widetilde{H}^{t}(\Omega)$, 
see, e.g., \cite[Sec.~{1.4.4}]{Grisvard}.
Furthermore, for $t > 0$, the space $H^{-t}(\Omega)$ 
denotes the dual space of $\widetilde{H}^t(\Omega)$, 
and we write $\skp{\cdot,\cdot}_{L^2(\Omega)}$ 
for the duality pairing that extends the $L^2(\Omega)$-inner product.
\bigskip

We denote by $\Rpos$ the positive real numbers.
For subsets $\omega \subset \R^d$, we will use the notation $\omega^+\coloneqq
\omega \times \Rpos$; in addition, for real $\boundY>0$, we write $ \omega^\boundY =
\omega\times (0, \boundY)$.
For any multi index $\beta = (\beta_1,\dots,\beta_d)\in \mathbb{N}^d_0$, we
denote $\dbeta = \partial^{\beta_1}_{x_1}\cdots \partial^{\beta_d}_{x_d}$ and
$\betam= \sum_{i=1}^d\beta_i$.
We adhere to the convention that empty sums are null, i.e., 
$\sum_{j=a}^b c_j =0$ when $b<a$; 
this even applies to the case where the terms $c_j$ may not be defined. 
We also follow the standard convention $0^0 = 1$.
\bigskip

We use the notation $\lesssim$ to abbreviate $\leq$ up to a generic constant $C>0$
that does not depend on critical parameters in our analysis.

\section{Setting and Statement of the Main Result}
\label{sec:Set}
There are several different ways to define the fractional Laplacian $(-\Delta)^s$ for $s \in (0,1)$. 
A classical definition on the full space ${\mathbb R}^d$ 
is in terms of the Fourier transformation ${\mathcal F}$, i.e., 
$({\mathcal F} (-\Delta)^s u)(\xi) = |\xi|^{2s} ({\mathcal F} u)(\xi)$. 
Alternative, equivalent definitions of $(-\Delta)^s$ are, e.g., 
via spectral, semi-group, or operator theory,~\cite{Kwasnicki} or via singular integrals.

In the following, we consider the integral fractional Laplacian defined 
pointwise for sufficiently smooth functions $u$ as the principal value integral  
\begin{align}\label{eq:fracLaplaceDef}
(-\Delta)^su(x) \coloneqq C(d,s) \; \text{P.V.} \int_{\R^d}\frac{u(x)-u(z)}{\abs{x-z}^{d+2s}} \, dz \quad \text{with} \quad
C(d,s)\coloneqq - 2^{2s}\frac{\Gamma(s+d/2)}{\pi^{d/2}\Gamma(-s)},
\end{align}
where $\Gamma(\cdot)$ denotes the Gamma function. 
We investigate the fractional differential equation 
\begin{subequations}\label{eq:modelproblem}
\begin{align}
 (-\Delta)^su &= f \qquad \text{in}\, \Omega, \\
 u &= 0 \quad \quad\, \text{in}\, \Omega^c\coloneqq\R^d \backslash \overline{\Omega},
\end{align}
\end{subequations}
where $s \in (0,1)$ and $f \in H^{-s}(\Omega)$ is a given right-hand side. 
Equation \eqref{eq:modelproblem} is understood in weak form: 
Find $u \in \widetilde{H}^s(\Omega)$ such that 
\begin{equation}
\label{eq:weakform}
a(u,v)\coloneqq \skp{(-\Delta)^s u,v}_{L^2(\R^d)} = \skp{f,v}_{L^2(\Omega)} 
\qquad \forall v \in \widetilde{H}^s(\Omega). 
\end{equation}
The bilinear form $a(\cdot,\cdot)$ has the 
alternative representation 
\begin{equation}
\label{eq:DefBil}
a(u,v) = 
 \frac{C(s)}{2} \int_{\R^d}\int_{\R^d} 
 \frac{(\tilde{u}(x)-\tilde{u}(z))(\tilde{v}(x) - \tilde{v}(z))}{\abs{x-z}^{d+2s}} \, dz \, dx 
\qquad \forall u,v \in \widetilde{H}^s(\Omega). 
\end{equation}
Observe that the domain of integration in the bilinear form $a(\cdot,\cdot)$ 
in \eqref{eq:DefBil} equals $(\Omega\times \R^d) \cup (\R^d \times \Omega)$.
Existence and uniqueness of a weak solution $u \in \widetilde{H}^s(\Omega)$ 
of \eqref{eq:weakform} follow from
the Lax--Milgram Lemma for any $f \in H^{-s}(\Omega)$, 
upon the observation
that the bilinear form 
$a(\cdot,\cdot): \widetilde{H}^s(\Omega)\times \widetilde{H}^s(\Omega)\to \R$ 
is continuous and coercive (observing that coercivity with respect to the
$\widetilde{H}^s(\Omega)$-norm follows from \eqref{eq:Htildet-vs-HtRd}).

The main result of this article
asserts that, provided the data $f$ is analytic in $\overline{\Omega}$,
the variational solution $u$ of \eqref{eq:modelproblem} admits
\emph{weighted analytic regularity} 
in a scale of boundary-, edge- and corner-weighted Sobolev spaces
in $\Omega$. 
To state the result, we introduce some notation.

In the following, we consider 
$\Omega \subset \R^3$ a bounded, Lipschitz polyhedron 
with boundary $\partial\Omega$ comprised of 
finitely many vertices, and straight edges and plane faces. 
In $\overline{\Omega}$, we denote by 
$\cV$ the set of vertices $\bfv$ and by 
$\cE$ the set of the (open) edges $\bfe$, and by
$\cF$ the set of the (open) faces $\bff$ of $\partial\Omega$. 
Evidently then, 
$\partial\Omega = \bigcup_{\cF} \bff \cup \bigcup_{\cE} \bfe \cup \bigcup_{\cV} \bfv$.

For 
$\bfv \in \cV$, 
$\bfe \in \cE$, and
$\bff \in \cF$,
we shall require the distance functions 
\begin{align*} 
  \rv(x)\coloneqq|x - \bfv|, 
  \qquad 
  \re(x)\coloneqq\inf_{y \in \bfe} |x - y|, 
  \qquad
  \rf(x)\coloneqq\inf_{y \in \bff} |x - y|,
  \quad 
  x\in \Omega,
\end{align*}
and corresponding (nondimensional) relative distances
\begin{align*}
  \rhove(x)\coloneqq \re(x)/\rv(x),
  \qquad 
  \rhoef(x)\coloneqq \rf(x)/\re(x).
\end{align*} 
\subsection{Partition of $\Omega$}
\label{sec:PartOmega}
For each vertex 
$\bfv \in \cV$, 
we denote by 
$\cE_{\bfv}\coloneqq \{\bfe \in \cE: \bfv \in \overline{\bfe}\}$
the set of all edges that meet at $\bfv$, and 
$\cF_{\bfv}\coloneqq \{\bff \in \cF: 
 \bff \cap \overline{\bfv} \ne \emptyset\}$
the set of all faces abutting at the vertex $\bfv$.
For any edge $\bfe \in \cE$, 
we define 
$\cV_{\bfe}\coloneqq \{\bfv \in \cV: 
 \bfv \in \overline{\bfe}\} 
 = 
 \partial {\bfe}$,
and 
$\cF_{\bfe}\coloneqq \{\bff \in \cF: \bff \cap \overline{\bfe} \ne \emptyset\}$ 
as the set of faces sharing the edge $\bfe$.

For any face $\bff\in\cF$,
$\cE_{\bff} \coloneqq \{\bfe \in \cE\,:\, \bfe \subset \partial{\bff} \}$ 
is the set of edges abutting the face $\bff$, and 
$\cV_{\bff} \coloneqq \{\bfv \in \cV\,:\, \bfv \subset \overline{\bff} \}$ is
the set of vertices contained in the face $\overline{\bff}$.

For fixed, sufficiently small $\omegaeps > 0$ and for 
$\bfv\in\cV$, $\bfe\in\cE$, $\bff\in\cF$,
we decompose $\Omega$ into various neighborhoods defined as 
  \begin{align*}
    \omegavef^{\omegaeps} & \coloneqq \{x \in \Omega\,:\, \rv(x) < \omegaeps \quad \wedge \quad \rhove(x) < \omegaeps \quad \wedge \quad \rhoef(x) < \omegaeps \},
    \\
    \omegave^{\omegaeps} &\coloneqq \{x \in \Omega\,:\, \rv(x) < \omegaeps \quad \wedge \quad \rhove(x) < \omegaeps \quad \wedge \quad \rhoef(x) \geq \omegaeps \quad \forall \bff \in \mathcal{F}_{\bfe} 
    \},
    \\
    \omegavf^{\omegaeps} &\coloneqq \{x \in \Omega\,:\, \rv(x) < \omegaeps \quad \wedge \quad \rhove(x) \geq \omegaeps \quad \wedge \quad \rhoef(x) < \omegaeps \quad \forall \bfe \in \mathcal{E}_{\bfv} \cap \mathcal{E}_{\bff} \},
    \\
   \omegav^{\omegaeps} & \coloneqq \{x \in \Omega\,:\, \rv(x) < \omegaeps \quad \wedge \quad \rhove(x) \geq \omegaeps \quad \wedge \quad \rhoef(x) \geq \omegaeps \quad \forall \bfe \in \mathcal{E}_{\bfv},\; \bff \in \mathcal{F}_{\bfv} \}, 
    \\
    \omegaef^{\omegaeps} &\coloneqq \{x \in \Omega\,:\, \rv(x) \geq \omegaeps \quad \wedge \quad \re(x) < \omegaeps^2 \quad \wedge \quad \rhoef(x) < \omegaeps \quad \forall \bfv \in \mathcal{V}_{\bfe} \},
    \\
    \omegae^{\omegaeps} &\coloneqq \{x \in \Omega\,:\, \rv(x) \geq \omegaeps \quad \wedge \quad \re(x) < \omegaeps^2 \quad \wedge \quad \rhoef(x) \geq \omegaeps \quad \forall \bfv \in \mathcal{V}_{\bfe},\; \bff\in \mathcal{F}_{\bfe} \},
\\
    \omegaf^{\omegaeps} &\coloneqq \{x \in \Omega\,:\, \rv(x) \geq \omegaeps \quad \wedge \quad \re(x) \geq \omegaeps^2 \quad \wedge \quad \rf(x) < \omegaeps^3 \quad \forall \bfv \in \mathcal{V}_{\bff}, \; \bfe \in \mathcal{E}_{\bff} \},
    \\
    \Omega_{\mathrm{int}}^{\omegaeps} 
    &\coloneqq \{x \in \Omega\,:\, \rv(x) \geq \omegaeps \quad \wedge \quad \re(x) \geq \omegaeps^2 \quad \wedge \quad 
                                   \rf(x) \geq \omegaeps^3 \quad \forall \bfv, \bfe, \bff \}.
  \end{align*}


\begin{figure}[ht]
\begin{center}
\begin{tikzpicture}[scale=1.2]
  \def\R{5}
  \def\A{60}
\def\Rin{1/3}

\draw (1/2*\R, 0) node [below]{$\mathbf{f}$} ;
\draw ({1/2*\R*cos(\A)-.1},{1/2*\R*sin(\A)}) node[above]{$\mathbf{f}'$};
\draw ({\R*cos(\A/2)-.3},{1/2*\R*sin(\A)}) node[above]{$\mathbf{f}''$};
\draw (0,0) node {\textbullet} node[left] {$\mathbf{e}$}; 
\draw (\R,0) node {\textbullet} node[right] {$\mathbf{e'}$}; 
\draw ({\R*cos(\A)}, {\R*sin(\A)}) node {\textbullet} node[left] {$\mathbf{e''}$}; 

  \draw[-] (0, 0) -- ({\R*cos(\A)}, {\R*sin(\A)});
  \draw[-] (0, 0) -- (\R, 0); 
  \draw[-] ({\R*cos(\A)}, {\R*sin(\A)}) -- (\R, 0); 
  
  \draw[dashed] (0, 0) -- ({\Rin*\R*cos(\A/3)},  {\Rin*\R*sin(\A/3)}); 
  \draw[dashed] (0, 0) -- ({\Rin*\R*cos(2*\A/3)},{\Rin*\R*sin(2*\A/3)}); 
  \draw[dashed] (\R, 0) -- ({\R-\Rin*\R*cos(\A/3)},  {\Rin*\R*sin(\A/3)}); 
  \draw[dashed] (\R, 0) -- ({\R-\Rin*\R*cos(2*\A/3)},{\Rin*\R*sin(2*\A/3)}); 
  \draw[dashed] ({\R*cos(\A)}, {\R*sin(\A)}) -- ({\R*cos(\A)+\Rin*\R*cos(-2*\A+\A/3)}, {\R*sin(\A)+\Rin*\R*sin(-2*\A+\A/3)}); 
  \draw[dashed] ({\R*cos(\A)}, {\R*sin(\A)}) -- ({\R*cos(\A)+\Rin*\R*cos(-2*\A+2*\A/3)}, {\R*sin(\A)+\Rin*\R*sin(-2*\A+2*\A/3)}); 
  
  \draw[dashed] ({\Rin*\R*cos(\A/3)}, {\Rin*\R*sin(\A/3)}) -- ({\R-\Rin*\R*cos(\A/3)},{\Rin*\R*sin(\A/3)}); 
  \draw[dashed] %
  ({\Rin*\R*cos(2*\A/3)}, {\Rin*\R*sin(2*\A/3)}) %
  --({\R*cos(\A)+\Rin*\R*cos(-2*\A+\A/3)}, {\R*sin(\A)+\Rin*\R*sin(-2*\A+\A/3)});
  \draw[dashed] %
  ({\R-\Rin*\R*cos(2*\A/3)}, {\Rin*\R*sin(2*\A/3)}) %
  --({\R*cos(\A)+\Rin*\R*cos(-2*\A+2*\A/3)}, {\R*sin(\A)+\Rin*\R*sin(-2*\A+2*\A/3)});

\draw [dashed,domain=0:\A] plot ({\Rin*\R*cos(\x)}, {\Rin*\R*sin(\x)});
\draw [dashed,domain=0:\A] plot ({\R-\Rin*\R*cos(\x)}, {\Rin*\R*sin(\x)});
\draw [dashed,domain=-2*\A:-\A] plot ({\R*cos(\A)+\Rin*\R*cos(\x)}, {\R*sin(\A)+\Rin*\R*sin(\x)});

\draw ({\R/2},{\R/3*sin(\A)}) node[below]{$\omega_{\mathbf{v}}$};
\draw ({\R/2},{2*\Rin/3*\R*sin(\A/3)}) node[below]{$\omega_{\mathbf{v}\mathbf{f}}$};
\draw (3/8*\R-0.3 ,{1/2*\R*sin(\A)-0.3}) node [above]{$\omega_{\mathbf{v}\mathbf{f'}}$} ;
\draw (\R-3/8*\R+0.4 ,{1/2*\R*sin(\A)-0.3}) node [above]{$\omega_{\mathbf{v}\mathbf{f''}}$} ;
 
\draw ({3/4*\Rin*\R*cos(\A/2)},{3/4*\Rin*\R*sin(\A/2)-0.2}) node [above]{$\omega_{\mathbf{v}\mathbf{e}}$} ;
\draw ({3/4*\Rin*\R*cos(\A/2)+0.1},{0.1}) node [above]{$\omega_{\mathbf{v}\mathbf{e}\mathbf{f}}$} ;
\draw ({1/2*\Rin*\R*cos(\A/6)+0.05},{3/4*\Rin*\R*sin(\A/2)+0.2}) node [above]{$\omega_{\mathbf{v}\mathbf{e}\mathbf{f'}}$} ;

\draw ({\R-3/4*\Rin*\R*cos(\A/2)},{3/4*\Rin*\R*sin(\A/2)-0.2}) node [above]{$\omega_{\mathbf{v}\mathbf{e'}}$} ;
\draw ({\R-3/4*\Rin*\R*cos(\A/2)-0.1},{0.1}) node [above]{$\omega_{\mathbf{v}\mathbf{e'}\mathbf{f}}$} ;
\draw ({\R-1/2*\Rin*\R*cos(\A/6)-0.05},{3/4*\Rin*\R*sin(\A/2)+0.25}) node [above]{$\omega_{\mathbf{v}\mathbf{e'}\mathbf{f'}}$} ;

\draw ({\R*cos(\A)+\Rin*\R*cos(-2*\A+2*\A/3)-0.25}, {\R*sin(\A)+\Rin*\R*sin(-2*\A+2*\A/3)}) node [above]{$\omega_{\mathbf{v}\mathbf{e''}}$} ;
\draw ({\R*cos(\A)+\Rin*\R*cos(-2*\A+1*\A/6)}, {\R*sin(\A)+\Rin*\R*sin(-2*\A+1*\A/6)}) node [above]{$\omega_{\mathbf{v}\mathbf{e''}\mathbf{f'}}$} ;
\draw ({\R*cos(\A)+\Rin*\R*cos(-2*\A+5*\A/6)}, {\R*sin(\A)+\Rin*\R*sin(-2*\A+5*\A/6)}) node [above]{$\omega_{\mathbf{v}\mathbf{e''}\mathbf{f''}}$} ;
\end{tikzpicture}
\begin{tikzpicture}[scale=1.2]
  \def\R{5}
  \def\A{60}
\def\Rin{3/7}

\draw (1/2*\R, 0) node [below]{$\mathbf{e''}$} ;
\draw ({3/6*\R*cos(\A/2)},  {3/6*\R*sin(\A/2)}) node [below]{$\mathbf{e}$} ;
\draw ({1/2*\R*cos(\A)-.1},{1/2*\R*sin(\A)}) node[above]{$\mathbf{e}'$};
\draw (0,0) node {\textbullet} node[left] {$\mathbf{v}$}; 

  \draw[-] (0, 0) -- ({\R*cos(\A)}, {\R*sin(\A)});
  \draw[-] (0, 0) -- (\R, 0); 
  \draw[] (0, 0) -- ({4/6*\R*cos(\A/2)},  {4/6*\R*sin(\A/2)}); 

\draw[](\R, 0) to[bend right] ({4/6*\R*cos(\A/2)},  {4/6*\R*sin(\A/2)});
\draw[]({4/6*\R*cos(\A/2)},  {4/6*\R*sin(\A/2)}) to[bend right] ({\R*cos(\A)}, {\R*sin(\A)}) ;

\draw [domain=0:\A] plot ({\R*cos(\x)}, {\R*sin(\x)});
\draw [dashed,domain=0:\A] plot ({\R*cos(\x)}, {\R*sin(\x)});
\draw [dashed,domain=0:\A] plot ({\R*cos(\x)}, {\R*sin(\x)});

\coordinate (A1) at ({4/6*\R*cos(\A/2)},  {4/6*\R*sin(\A/2)});
\coordinate (B1) at ({0.815*\R*cos(2*\A/6)},  {0.815*\R*sin(2*\A/6)});
\coordinate (C1) at  ({0.88*\R*cos(2*\A/5)},  {0.89*\R*sin(2*\A/5)});
\draw[blue,thick] (0, 0) -- (B1); 
\draw[dashed] (0, 0) -- (C1); 
\draw[blue,thick](A1) -- (C1); 
\draw[blue,thick] (C1) to[bend left] (B1);

\draw[cyan,dashed,thick] (C1) to[bend left] (B1);
\draw[magenta,dashed,thick](A1) -- (C1); 
\draw[cyan,dashed,thick] (0, 0) -- (B1); 

\coordinate (A1) at ({0.902*\R*cos(3*\A/15)},  {0.902*\R*sin(3*\A/15)});
\coordinate (B1) at ({0.96*\R*cos(4*\A/15)},  {0.96*\R*sin(4*\A/15)});
\coordinate (C1) at  ({0.88*\R*cos(2*\A/5)},  {0.89*\R*sin(2*\A/5)});
\draw[dashed] (0, 0) -- (B1); 
\draw[dashed] (0, 0) -- (C1); 
\draw[cyan,thick](0,0) -- (A1); 
\draw[cyan,thick] (C1) to[bend left=20] (B1);
\draw[cyan,thick] (B1) to[bend left=20] (A1);

\coordinate (A1) at ({4/6*\R*cos(\A/2)},  {4/6*\R*sin(\A/2)});
\coordinate (B1) at ({0.88*\R*cos(2*\A/5)},  {0.89*\R*sin(2*\A/5)});
\coordinate (C1) at  ({0.88*\R*cos(8*\A/15)},  {0.89*\R*sin(8*\A/15)});
\coordinate (D1) at ({0.805*\R*cos(2*\A/3)},  {0.79*\R*sin(2*\A/3)});

\draw[dashed] (0, 0) -- (C1); 
\draw[magenta,thick] (C1) -- (A1);
\draw[magenta,thick] (B1) to[bend right=20] (C1);
\draw[blue,thick] (C1)  to[bend right=20] (D1);
\draw[blue,thick] (0,0) -- (D1);

\draw[blue,dashed,thick](A1) -- (C1); 

\draw[magenta] ({0.86*\R*cos(2*\A/5)},  {1.0*\R*sin(2*\A/5)}) node[left] {$\omega_{\mathbf{ve}}$};
\draw[cyan] ({0.88*\R*cos(2*\A/5)},  {0.7*\R*sin(2*\A/5)}) node[right] {$\omega_{\mathbf{vf}}$};
\draw[blue] ({0.88*\R*cos(2*\A/5)},  {0.79*\R*sin(2*\A/5)}) node[left] {$\omega_{\mathbf{vef}}$};
\draw[blue] ({0.67*\R*cos(2*\A/5)},  {1.15*\R*sin(2*\A/5)}) node[right] {$\omega_{\mathbf{vef'}}$};
\end{tikzpicture}
\end{center}
\caption{\label{fig:vertex-notation} 
Notation near a vertex $\mathbf{v}$, left: top view of the vertex cone (the vertex $\mathbf{v}$ is behind, on a straight line to the barycenter of the triangle), right: side view of the vertex cone.}
\end{figure}
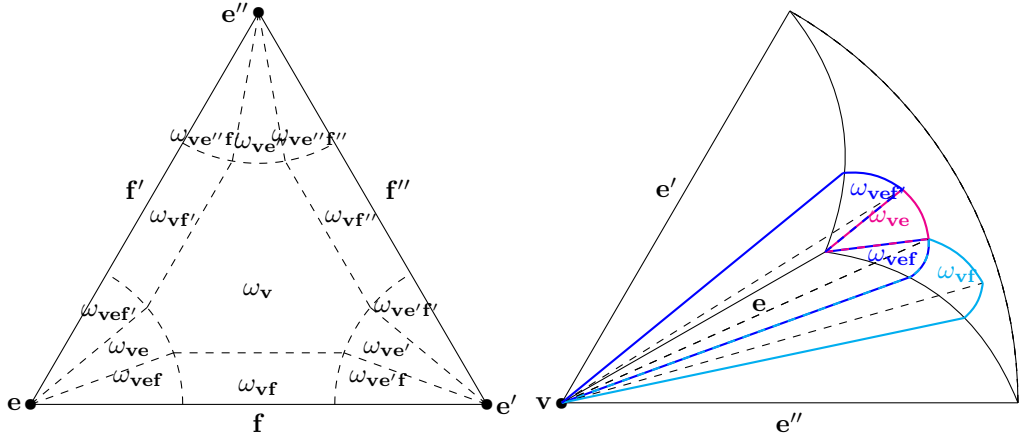

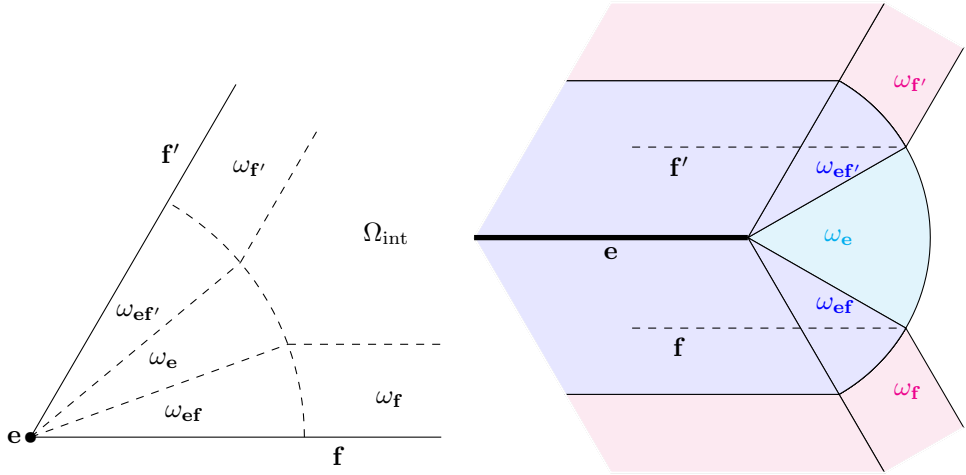
\begin{figure}[ht]
\begin{center}
\begin{tikzpicture}[scale=1.8]
  \def\R{3}
  \def\A{60}
\draw ({3/4*\R*cos(\A)-.1},{3/4*\R*sin(\A)}) node[above]{$\mathbf{f}'$};
\draw (0,0) node {\textbullet} node[left] {$\mathbf{e}$}; 
  \draw[-] (0, 0) -- ({\R*cos(\A)}, {\R*sin(\A)});
  \draw[-] (0, 0) -- (\R, 0); 
  \draw[dashed] (0, 0) -- ({2/3*\R*cos(\A/3)},  {2/3*\R*sin(\A/3)}); 
  \draw[dashed] (0, 0) -- ({2/3*\R*cos(2*\A/3)},{2/3*\R*sin(2*\A/3)}); 
  \draw[dashed] ({2/3*\R*cos(\A/3)}, {2/3*\R*sin(\A/3)}) -- (\R,{2/3*\R*sin(\A/3)}); 
  \draw[dashed] %
  ({2/3*\R*cos(2*\A/3)}, {2/3*\R*sin(2*\A/3)}) %
  -- ({2/3*\R*cos(2*\A/3) + \R*(1-2/3*cos(\A/3))*cos(\A)}, {2/3*\R*sin(2*\A/3) + \R*(1-2/3*cos(\A/3))*sin(\A)});
\draw (3/4*\R, 0) node [below]{$\mathbf{f}$} ;
\draw (3/8*\R ,0.05) node [above]{$\omega_{\mathbf{e}\mathbf{f}}$} ;
\draw (7/8*\R, 0.15) node [above]{$\omega_{\mathbf{f}}$} ;
\draw ({\R*cos(\A/2)}, {\R*sin(\A/2)}) node {$\Omega_{\mathrm{int}}$} ;
\draw ({3/8*\R*cos(\A/2)},{3/8*\R*sin(\A/2)-0.12}) node [above]{$\omega_{\mathbf{e}}$}; 
\draw ({3/8*\R*cos(3*\A/4)},{3/8*\R*sin(3*\A/4)}) node [above]{$\omega_{\mathbf{e}\mathbf{f}'}$} ;
\draw ({7/8*\R*cos(3*\A/4)-0.25},{7/8*\R*sin(3*\A/4)}) node [above]{$\omega_{\mathbf{f}'}$} ;
\draw [dashed,domain=0:\A] plot ({2/3*\R*cos(\x)}, {2/3*\R*sin(\x)});
\end{tikzpicture}
\hspace{3mm}
\begin{tikzpicture}[scale=1.2]
  \def\R{3}
  \def\A{60}
\draw[fill=magenta!10,domain=\A/2:\A] plot ({2/3*\R*cos(\x)}, {2/3*\R*sin(\x)}) -- ({\R*cos(\A)}, {\R*sin(\A)}) -- ({2/3*\R*cos(\A/2) + \R*(1-2/3*cos(\A/2))*cos(\A)}, {2/3*\R*sin(\A/2) + \R*(1-2/3*cos(\A/2))*sin(\A)}) -- cycle;
\draw[fill=magenta!10,domain=\A/2:\A] plot ({2/3*\R*cos(\x)}, {2/3*\R*sin(\x)}) -- (0,0) -- ({2/3*\R*cos(\A/2)}, {2/3*\R*sin(\A/2)});
\draw[fill=magenta!10]  ({2/3*\R*cos(\A)}, {2/3*\R*sin(\A)}) -- ({\R*cos(\A)}, {\R*sin(\A)}) -- ({\R*cos(\A)-\R}, {\R*sin(\A)}) -- ({2/3*\R*cos(\A)-\R}, {2/3*\R*sin(\A)}) -- cycle;

\draw[fill=magenta!10,domain=\A/2:\A] plot ({2/3*\R*cos(\x)}, {-2/3*\R*sin(\x)}) -- ({\R*cos(\A)}, {-\R*sin(\A)}) -- ({2/3*\R*cos(\A/2) + \R*(1-2/3*cos(\A/2))*cos(\A)}, {-2/3*\R*sin(\A/2) - \R*(1-2/3*cos(\A/2))*sin(\A)}) -- cycle;
\draw[fill=magenta!10,domain=\A/2:\A] plot ({2/3*\R*cos(\x)}, {-2/3*\R*sin(\x)}) -- (0,0) -- ({2/3*\R*cos(\A/2)}, {-2/3*\R*sin(\A/2)});
\draw[fill=magenta!10]  ({2/3*\R*cos(\A)}, {-2/3*\R*sin(\A)}) -- ({\R*cos(\A)}, {-\R*sin(\A)}) -- ({\R*cos(\A)-\R}, {-\R*sin(\A)}) -- ({2/3*\R*cos(\A)-\R}, {-2/3*\R*sin(\A)}) -- cycle;

\draw[fill=cyan!10,domain=-\A/2:\A/2] plot ({2/3*\R*cos(\x)}, {2/3*\R*sin(\x)}) -- (0,0) -- ({2/3*\R*cos(-\A/2)}, {2/3*\R*sin(-\A/2)});

\draw[fill=blue!10,domain=\A/2:\A] plot ({2/3*\R*cos(\x)}, {2/3*\R*sin(\x)}) -- ({2/3*\R*cos(\A)-\R}, {2/3*\R*sin(\A)}) -- ({2/3*\R*cos(\A/2)-\R}, {2/3*\R*sin(\A/2)}) -- ({2/3*\R*cos(\A/2)}, {2/3*\R*sin(\A/2)});
\draw[fill=blue!10,domain=\A/2:\A] plot ({2/3*\R*cos(\x)}, {2/3*\R*sin(\x)}) -- (0,0) -- ({2/3*\R*cos(\A/2)}, {2/3*\R*sin(\A/2)});
\draw[fill=blue!10]  ({2/3*\R*cos(\A)}, {2/3*\R*sin(\A)}) -- (0,0) -- (-\R,0) -- ({2/3*\R*cos(\A)-\R}, {2/3*\R*sin(\A)}) -- cycle;

\draw[fill=blue!10,domain=\A/2:\A] plot ({2/3*\R*cos(\x)}, {-2/3*\R*sin(\x)}) -- ({2/3*\R*cos(\A)-\R}, {-2/3*\R*sin(\A)}) -- ({2/3*\R*cos(\A/2)-\R}, {-2/3*\R*sin(\A/2)}) -- ({2/3*\R*cos(\A/2)}, {-2/3*\R*sin(\A/2)});
\draw[fill=blue!10,domain=\A/2:\A] plot ({2/3*\R*cos(\x)}, {-2/3*\R*sin(\x)}) -- (0,0) -- ({2/3*\R*cos(\A/2)}, {-2/3*\R*sin(\A/2)});
\draw[fill=blue!10]  ({2/3*\R*cos(\A)}, {-2/3*\R*sin(\A)}) -- (0,0) -- (-\R,0) -- ({2/3*\R*cos(\A)-\R}, {-2/3*\R*sin(\A)}) -- cycle;

\draw[dashed] ({2/3*\R*cos(\A/2)-\R}, {2/3*\R*sin(\A/2)}) -- ({2/3*\R*cos(\A/2)}, {2/3*\R*sin(\A/2)});
\draw[dashed] ({2/3*\R*cos(-\A/2)-\R}, {2/3*\R*sin(-\A/2)}) -- ({2/3*\R*cos(-\A/2)}, {2/3*\R*sin(-\A/2)});

\draw[white,thick] ({2/3*\R*cos(\A/2) + \R*(1-2/3*cos(\A/2))*cos(\A)}, {2/3*\R*sin(\A/2) + \R*(1-2/3*cos(\A/2))*sin(\A)}) -- ({\R*cos(\A)}, {\R*sin(\A)}) -- ({\R*cos(\A)-\R}, {\R*sin(\A)})--({-\R}, {0}) -- ({\R*cos(\A)-\R}, {-\R*sin(\A)}) -- ({\R*cos(\A)}, {-\R*sin(\A)})--({2/3*\R*cos(\A/2) + \R*(1-2/3*cos(\A/2))*cos(\A)}, {-2/3*\R*sin(\A/2) - \R*(1-2/3*cos(\A/2))*sin(\A)});

\draw[] (-\R/2,0) node[below] {$\mathbf{e}$};
\draw[] (-\R/4,-\R/3) node[below] {$\mathbf{f}$};
\draw[] (-\R/4,\R/3) node[below] {$\mathbf{f}'$};
\draw[line width=2] (-\R,0) -- (0,0);

\draw[cyan] (\R/3,0) node[] {$\omega_{\mathbf{e}}$};
\draw[blue] (\R/4-0.1,\R/4) node[right] {$\omega_{\mathbf{ef'}}$};
\draw[blue] (\R/4-0.1,-\R/4) node[right] {$\omega_{\mathbf{ef}}$};

\draw[magenta] (\R/2,\R/2+0.2) node[right] {$\omega_{\mathbf{f'}}$};
\draw[magenta] (\R/2,-\R/2-0.2) node[right] {$\omega_{\mathbf{f}}$};
\end{tikzpicture}
\end{center}
\caption{\label{fig:edge-notation}
Notation near an edge $\mathbf{e}$ with two faces $\mathbf{f},\mathbf{f}'$ 
meeting at the edge and no vertex close by, left: front view (edge collapses to point), right: side view.}
\end{figure}

Figure~\ref{fig:vertex-notation} 
illustrates the neighborhoods near a vertex and Figure~\ref{fig:edge-notation} 
shows the neighborhoods close to an edge but away from a vertex.
We drop the superscript $\omegaeps$ unless strictly necessary.

\noindent
{\bf Decompositions}:
We decompose the Lipschitz polyhedron $\Omega$ into (possibly overlapping) sectorial neighborhoods of vertices $\mathbf{v}$, 
which are unions of 
vertex, vertex-edge, vertex-face, and vertex-edge-face neighborhoods 
(as depicted in Figure~\ref{fig:vertex-notation}),
wedge-shaped neighborhoods of edges $\mathbf{e}$ 
(that are bounded away from a vertex, but are unions of edge- and edge-face neighborhoods 
as depicted in Figure~\ref{fig:edge-notation}), 
neighborhoods of faces $\mathbf{f}$, 
and an interior $\Omega_{\rm int}$, 
i.e., 
\begin{align}\label{eq:Nghbrhoods}
 \Omega = \Omega_{\rm int} \cup \bigcup_{\mathbf{v} \in \mathcal{V}}\left( \omega_{\mathbf{v}} 
           \cup \bigcup_{\mathbf{e} \in \mathcal{E}_{\mathbf{v}}, \; \mathbf{f} \in \mathcal{F}_{\mathbf{v}}} 
           \omega_{\mathbf{ve}} \cup \omega_{\mathbf{vf}} \cup \omegavef \right) 
           \cup \bigcup_{\mathbf{e} \in \mathcal{E}} \left( \omega_{\mathbf{e}} 
           \cup \bigcup_{\mathbf{f} \in \mathcal{F}_{\mathbf{e}}} \omega_{\mathbf{ef}} \right)
           \cup \bigcup_{\mathbf{f} \in \mathcal{F}} \omega_{\mathbf{f}}
\;.
\end{align}
Each sectoral and edge neighborhood may have a different value $\omegaeps$, 
but we assume that each $\omega_{\bullet}$ abuts at 
most one vertex, one edge, or one face of $\partial\Omega$. 
Since only finitely many distinct types of neighborhoods are needed to decompose the polygon, 
the interior $\Omega_{\rm int}\subset\Omega $ has a positive distance from the boundary.  
\subsection{Coordinates}
\label{sec:Coord}
To state the main result, and throughout the ensuing proof of analytic
estimates, we require coordinates
tangential resp. perpendicular to edges $\bfe$ and faces $\bff$ in the local neighborhoods.
\begin{definition} \label{def:Coord}[Co-ordinates and directional derivatives in neighborhoods of singular sets] 
\newline
\begin{enumerate}
\item
In {\bf face or vertex-face neighborhoods} $\omegaf$, $\omegavf$,
we let $\mathbf{f}_{i,\parallel}$, $i=1,2$ 
and $\mathbf{f}_{\perp}$ 
be unit vectors 
such that $\mathbf{f}_{i,\parallel}$ are mutually orthogonal and 
span the tangential plane to $\mathbf{f}$, 
and $\mathbf{f}_{\perp}$ is normal to $\mathbf{f} \in \mathcal{F}$. 
We assume that $\mathbf{f}_{\perp}$ and $\mathbf{f}_{i,\parallel}$ 
are right-oriented.
\item
In 
{\bf edge or vertex-edge neighborhoods} $\omegae$, $\omegave$, 
we let
$\mathbf{e}_{\parallel}$ and $\mathbf{e}_{1,\perp}$, $\mathbf{e}_{2,\perp}$ 
be unit vectors
such that $\mathbf{e}_{\parallel}$ is tangential to $\mathbf{e}$ 
and 
$\mathbf{e}_{i,\perp}$ are mutually orthogonal and span the plane transversal to $\mathbf{e}$.

\item
In {\bf edge-face or vertex-edge-face neighborhoods} $\omegaef$, $\omegavef$,
we choose three linearly independent, right-oriented 
unit vectors $\{\gpar,\gparperp,\gperp\}$ 
satisfying
\begin{itemize}
\item $\gpar$ is parallel to $\bfe$ and $\bff$;
\item $\gparperp$ is perpendicular to $\bfe$ and parallel to $\bff$;
\item $\gperp$ is perpendicular to $\bfe$ and $\bff$.
\end{itemize}
\end{enumerate}

For $\bfs\in \{\eiperp, \epar, \fperp, \fipar, \gpar, \gparperp, \gperp \}$ 
we denote first order derivatives as
$D_{\bfs} v \coloneqq \bfs \cdot \nabla_x v$.
For higher order derivatives, we set
\begin{equation*}
  D_{\bfs}^k v \coloneqq D_{\bfs}(D_{\bfs}^{k-1} v) \quad \text{for } k>1.
\end{equation*}
Finally, for $\beta = (\beta_1, \beta_2)\in \N^2_0$, we write
\begin{equation*}
  \Dperpe^{\beta} = \Doneperpe^{\beta_1}\Dtwoperpe^{\beta_2}, \qquad 
  \Dparf^{\beta} = \Doneparf^{\beta_1}\Dtwoparf^{\beta_2}.
\end{equation*}
\end{definition}
The coordinates introduced above can be written in a unified way. 
The following definition formalizes the notation 
used to write the statement of our main result and the proofs 
in a compact form. 
\begin{definition}\label{def:gs}
  Let $\omega \subset \Omega$ be any connected set abutting at most one vertex $\bfv$,
  one edge $\bfe$, and one face $\bff$ of $\partial \Omega$.
  We take $(\gperp, \gparperp, \gpar)$ 
   to be linearly independent unit vectors in $\R^3$ that additionally satisfy
    \begin{itemize}
    \item $\gperp$ is perpendicular to $\bff$ if $\bff \cap \partial \omega\neq \emptyset$
    and perpendicular to $\bfe$ if $\bfe \cap \partial \omega\neq \emptyset$;
  \item $\gparperp$ is parallel to $\bff$ if $\bff \cap \partial \omega\neq \emptyset$
    and perpendicular to $\bfe$ if $\bfe \cap \partial \omega\neq \emptyset$;
  \item $\gpar$ is parallel to $\bff$ if $\bff \cap \partial \omega\neq \emptyset$
    and parallel to $\bfe$ if $\bfe \cap \partial \omega\neq \emptyset$.
    \end{itemize}
  With these vectors and for  $\beta = (\betaperp, \betaparperp, \betapar)\in\N^3_0$, we introduce the derivative 
 \begin{align*}
 D_{(\gperp, \gparperp, \gpar)}^{\beta} =  D_{\gperp}^{\betaperp}D_{\gparperp}^{\betaparperp}D_{\gpar}^{\betapar}.
 \end{align*} 
\end{definition}
\subsection{Statement of the main result}
\label{sec:StatMainRes}
The following statement is the main result of this work.
It provides weighted analytic regularity in all neighborhoods used to decompose $\Omega$.
\begin{theorem}\label{thm:mainresult}
Let $\Omega \subset \R^3$ be a bounded, open Lipschitz polyhedron
whose boundary $\partial\Omega$ comprises
finitely many vertices, straight edges and plane faces.

Let the data $f \in C^{\infty}(\overline{\Omega})$ satisfy
with a constant $\gamma_f>0$
  \begin{equation}
    \label{eq:analyticdata}
\forall j \in \N_0\colon \quad 
    \sum_{\betam = j} \|\dbeta f\|_{L^2(\Omega)} \leq \gamma_f^{j+1}j^j. 
  \end{equation}
Let $u$ be the weak solution of \eqref{eq:modelproblem}.

Then, 
there exists $\gamma>0$ depending only on $\gamma_f$, $s$, 
and $\Omega$ such that for all $t<1/2$, 
    there exists $C_t>0$ such that
    for all $\beta = (\betaperp, \betaparperp, \betapar)\in\N^3_0$ 
    and all $\omega\subset\Omega$ as in Definition \ref{def:gs}, 
    it holds that 
  \begin{equation*}
    \| r_{\partial \Omega}^{-t-s} r_{\bfv}^{\betapar} r_{\bfe}^{\betaparperp} r_{\bff}^{\betaperp} 
            D_{(\gperp, \gparperp, \gpar)}^{\beta} u \|_{L^2(\omega)} \leq C_t \gamma^{\betam}\betam^{\betam}
  \end{equation*}
  with $\bfv$, $\bfe$, $\bff$ being the closest vertex, edge, face to $\omega$.
\end{theorem}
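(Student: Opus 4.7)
The plan is to lift the nonlocal problem \eqref{eq:modelproblem} to its Caffarelli--Stinga--Silvestre extension $U$, which solves a local divergence-form elliptic equation on $\Omega^+ \subset \R^4$ with the Muckenhoupt weight $y^{1-2s}$, and then to transfer analytic regularity of $U$ back to the trace $u$ on $\Omega\times\{0\}$. The bulk of the work is to prove the weighted estimate neighborhood-by-neighborhood along the eight-piece partition \eqref{eq:Nghbrhoods}; the interior case $\Omega_{\mathrm{int}}$ is standard analytic elliptic regularity, while each of the seven boundary neighborhood types requires its own dyadic covering and Caccioppoli bootstrap, with the correct choice of tangential frame from Definition~\ref{def:gs}.

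For the actual construction, I would first invoke the base (non-weighted) regularity shift from Section~\ref{sec:RegExt}, obtained via Savar\'e's difference-quotient method, which gives a small amount of extra smoothness of $u$ in $\widetilde{H}^{s+\tau}(\Omega)$ for some small $\tau>0$ uniformly up to $\partial\Omega$, and the companion interior Caccioppoli-type estimate for $U$. Then, for each neighborhood $\omega_\bullet$, I would set up a dyadic covering matched to its singular structure: a one-parameter chain in $r_\bff$ near a face, a two-parameter chain in $(r_\bfe, r_\bff/r_\bfe)$ near an edge-face, and a three-parameter chain in $(r_\bfv, r_\bfe/r_\bfv, r_\bff/r_\bfe)$ near a vertex-edge-face. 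Rescaling each dyadic cell to a reference ball of size $O(1)$ renders the extended PDE (together with its degenerate weight, which is scale-invariant) into a problem with $O(1)$-coefficients on which the interior/boundary Caccioppoli inequalities of Section~\ref{sec:LocTgReg} are applicable uniformly. On such a rescaled ball, tangential derivatives in the directions $\gpar$ and $\gparperp$ commute with the PDE and preserve the homogeneous Dirichlet condition along the stratum they are tangent to.

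The bootstrap is then an induction on $\betam$: assuming analytic bounds on $D_{(\gperp,\gparperp,\gpar)}^{\beta'}U$ for all $\beta'$ with $|\beta'|<\betam$, one applies tangential Caccioppoli estimates to $D_{\gpar}^{\betapar}D_{\gparperp}^{\betaparperp}U$, and recovers normal derivatives $D_{\gperp}^{\betaperp}$ from the equation itself, trading two tangential derivatives for one $\gperp$-derivative. Rescaling back from the reference ball to the dyadic shell of radii $r_\bfv^{a}(r_\bfe/r_\bfv)^{b}(r_\bff/r_\bfe)^{c}$ produces exactly the weight factors $r_\bfv^{\betapar}r_\bfe^{\betaparperp}r_\bff^{\betaperp}$; summing the resulting geometric series over the dyadic shells yields the $L^2(\omega)$-bound. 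The extra $r_{\partial\Omega}^{-t-s}$ weight is obtained from the trace inequality \eqref{eq:Htildet-vs-HtRd} combined with a Hardy-type estimate, transferring the $\widetilde{H}^s$-control on $u$ to $L^2$-control against the boundary-distance weight, at the cost of any $t<1/2$. The analyticity constant $\gamma$ and the $\betam^{\betam}$ growth are propagated through the induction from the analytic-data hypothesis \eqref{eq:analyticdata}, as in the polygonal case of \cite{FMMS21}.

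The main obstacle, and the reason this extends \cite{FMMS21} nontrivially, is the bookkeeping in the triply-combined neighborhood $\omegavef$ where three independent scales interact. One must verify that the tangential frame $(\gperp,\gparperp,\gpar)$ is consistent as one crosses from a face-dominated shell ($r_\bff \ll r_\bfe$) into an edge-dominated shell ($r_\bfe \ll r_\bfv$) and finally into a vertex-dominated shell, and that the Caccioppoli constants are uniform across all three scales despite the degeneracy $y^{1-2s}$. A second delicate point is the Leibniz-type combinatorics in three directional families: to keep the final factorial growth at $\betam^{\betam}$ rather than a power of $3$ worse, the induction must be organized so that each tangential commutator produces only a controlled number of strictly lower-order terms, paralleling but extending the two-directional bookkeeping of \cite{FMMS21}.
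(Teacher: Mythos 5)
Your proposal captures the overall architecture of the paper's argument — Caffarelli--Silvestre extension, Savar\'e-type base shift, Besicovitch coverings of each neighborhood by balls/half-balls/wedges whose radii scale with the distance to the relevant boundary stratum, Caccioppoli bootstrap, and transfer back to $u$ via the multiplicative trace inequality together with a Hardy argument. The ``recursive by dimension'' covering hierarchy you sketch (one scale near a face, two near an edge-face, three near a vertex-edge-face) is indeed the structure the paper uses.

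However, there is one genuine mismatch, and it is the one step where your mechanism would likely fail. You propose to ``recover normal derivatives $D_{\gperp}^{\betaperp}$ from the equation itself, trading two tangential derivatives for one $\gperp$-derivative.'' This is the classical Morrey--Nirenberg device, and it is \emph{not} what the paper does. For the extension problem $\operatorname{div}(y^\alpha\nabla U)=F$ in $\R^3\times\R_+$, isolating $\partial^2_{\gperp}U$ from the equation means expressing it in terms of the remaining two $x$-tangential second derivatives \emph{and} the term $y^{-\alpha}\partial_y(y^\alpha\partial_y U)$. The latter involves $y$-derivatives and the degenerate weight near $y=0$, which are precisely the quantities that are not controlled uniformly in the $L^2_\alpha$-bootstrap; there is no obvious way to propagate the factorial bounds through that substitution. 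The paper avoids this entirely: once it has localized to a covering ball $B_i\subset\Omega$ of radius proportional to the distance to the nearest boundary stratum, it applies the \emph{interior} high-order Caccioppoli inequality (Corollary~\ref{cor:CaccHighInt}) to control \emph{all} spatial directions at once, with no use of the equation to solve for perpendicular derivatives; the perpendicular weight factor $r_{\bff}^{\betaperp}$ (or $r_{\bfe}^{\betaparperp}$, etc.) is supplied purely by the ball radius $R_i$. The boundary Caccioppoli inequalities (Corollaries~\ref{cor:CaccHighBound-f},~\ref{cor:CaccHighBound-e}) are used only for directions tangent to the face or edge. Your proposal should be reorganized to this nested ball/half-ball/wedge scheme in which interior balls do all the perpendicular work; as written, the ``recover from the equation'' step is a gap.
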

The rest of this paper will develop the proof of these bounds.
\section{The Caffarelli-Silvestre extension}
\label{sec:CSExt}
Key to the present regularity analysis is a localization 
of the fractional Laplacian provided by the so-called 
\emph{Caffarelli-Silvestre extension}, \cite{CafSil07}:
the nonlocal operator $(-\Delta)^s$ 
can be realized via a Dirichlet-to-Neumann map 
of a degenerate, \emph{local} elliptic PDE on a half space in $\R^{d+1}$. 
Here, we shall be mainly interested in $d=3$. 

\subsection{Weighted spaces for the Caffarelli-Silvestre extension}
\label{sec:WghtSpcCS}
We recapitulate from \cite{FMMS21}
certain weighted function spaces which will be used in the sequel.
We distinguish the last component of points in $\R^{d+1}$ 
with the notation $(x,y)$ where
$x  = (x_1,\ldots,x_d)\in \R^d$, $y \in \R$ 
and we set
\begin{equation}
\label{eq:alpha}
\alpha \coloneqq 1-2s. 
\end{equation}
For open sets $D \subset \R^d \times \Rpos$, 
the weighted $L^2$-norm $\| \cdot \|_{L^2_\alpha(D)}$ is defined 
via
\begin{equation}\label{eq:wgtL2}
 \norm{U}_{L^2_\alpha(D)}^2 \coloneqq \int_{(x,y) \in D} y^{\alpha} \abs{U(x,y)}^2 dx \, dy.
\end{equation}
For the variational formulation of the CS extension,
we require the space $L^2_\alpha(D)$ of functions on $D$ that are
square (Lebesgue-)integrable with respect to the weight $y^\alpha$. 
With the weighted space 
$H^1_\alpha(D):= \{U \in L^2_\alpha(D)\,\colon\, \nabla U \in L^2_\alpha(D)\}$
we introduce the Beppo-Levi space \cite{deny-lions55}
\begin{equation}\label{eq:BL1a}
{\BL}^1_{\alpha}\coloneqq \{U \in L^2_{loc}(\R^{d} \times \Rpos)\,:\, 
\nabla U \in L^2_\alpha(\R^d \times \Rpos)\}.
\end{equation}
Elements $U \in {\BL}^1_\alpha$
admit a trace at $y=0$,
which we denote as 
$\operatorname{tr} U$. 
It holds that (e.g., \cite[Lem.~3.8]{KarMel19})
$\operatorname{tr} U \in H^s_{loc}(\R^d)$. 
Also, for
$\operatorname{supp} \operatorname{tr} U \subset \overline{\Omega}$ 
for a bounded Lipschitz domain $\Omega$, 
$\operatorname{tr} U \in \widetilde H^{s}(\Omega)$ 
and 
\begin{align}
\label{eq:L3.8-KarMel19}
 \|\operatorname{tr} U\|_{\widetilde{H}^s(\Omega)} \stackrel{\eqref{eq:Htildet-vs-HtRd}}{\lesssim} |\operatorname{tr} U|_{H^s(\R^d)} 
\stackrel{\text{\cite[Lem.~{3.8}]{KarMel19}}}{\lesssim} \norm{\nabla U}_{L^2_\alpha(\R^d \times \Rpos)}
\end{align}
with implied constant depending on $s$ and $\Omega$.
\subsection{Statement of the Caffarelli-Silvestre extension}
\label{sec:CSExt-sub}
Given $u \in \widetilde{H}^s(\Omega)$, 
let $U = U(x,y)$ denote the (unique in ${\BL}^1_{\alpha}$, see \cite{FMMS21}) 
minimum norm extension of $u$ to $\R^d \times \Rpos$, i.e., 
$$
U = \operatorname{arg min} 
\{\|\nabla U\|^2_{L^2_\alpha(\R^d \times \Rpos)}\,|\, 
   U \in {\BL}^1_\alpha,\, 
   \operatorname{tr} U = u \;\mbox{in}\; H^s(\R^d) \}.
$$ 

The Euler-Lagrange equations corresponding to this extension problem read
\begin{subequations}\label{eq:extension}
\begin{align}
\label{eq:extension-a}
 \div (y^\alpha \nabla U) &= 0  \;\quad\quad\text{in} \; \R^d \times (0,\infty), \\ 
\label{eq:extension-b}
 U(\cdot,0) & = u  \,\quad\quad\text{in} \; \R^d.
\end{align}
\end{subequations}
Henceforth, when referring to solutions of \eqref{eq:extension}, 
we will additionally understand that $U \in {\BL}^1_\alpha$. 

The relevance of \eqref{eq:extension} is due to the fact that
the fractional Laplacian applied to $u\in \widetilde{H}^s(\Omega)$ 
can be recovered as distributional normal trace of 
the extension problem \cite[Section 3]{CafSil07}, \cite{caffarelli-stinga16}:
\begin{align}\label{eq:DtNoperator}
(-\Delta)^s u =  -d_s \lim_{y\ra 0^+} y^\alpha \partial_y U(x,y),
\qquad 
d_s = 2^{2s-1}\Gamma(s)/\Gamma(1-s). 
\end{align}
\subsection{Variational Formulation of the CS Extension}
\label{sec:VarCSExt}
Fix $\boundY > 0$. 
Given $F \in L^2_{-\alpha}(\R^d \times (0,\boundY))$ and $f \in H^{-s}(\Omega)$,
consider the problem to
find the minimizer $U = U(x,y)$ with $x \in \R^d$ and $y \in \Rpos$ of 
\begin{align} \label{eq:minimization}
\mbox{ minimize ${\mathcal F}$ on 
${\BL}^1_{\alpha,0,\Omega}
\coloneqq  
\{U \in {\BL}^1_{\alpha}\,:\, \operatorname{tr} U =0 \; \mbox{ on $\Omega^c$}\},$ }
\end{align}
where 
\begin{align}
\label{eq:calFdef}
\mathcal{F}(U)& \coloneqq  \frac{1}{2} b(U,U) - \int_{\R^d \times (0,\boundY)} F U \, dx\,dy - \int_{\Omega} f \operatorname{tr} U \,dx, & 
b(U,V) & := \int_{\R^d \times \Rpos} y^\alpha \nabla U \cdot \nabla V\,dx\,dy. 
\end{align}
In virtue of a Poincar\'e inequality (\cite[Lemma 3.1]{FMMS21}),
the map ${\BL}^1_{\alpha,0,\Omega} \ni U \mapsto \|\nabla U\|_{L^2_\alpha(\R^d \times \R_+)}$ is a norm.
The  space ${\BL}^1_{\alpha,0,\Omega}$ 
endowed with this norm is a Hilbert space
with corresponding inner-product given by the bilinear form $b(\cdot,\cdot)$ in \eqref{eq:calFdef}.
Hence,
for every $\boundY\in (0,\infty)$, there is $C_{\boundY,\alpha} > 0$ such that 
\begin{equation}
\label{eq:lemma:properties-of-H1alpha}
\forall U \in {\BL}^1_{\alpha,0,\Omega}\colon \quad 
\|U\|_{L^2_{\alpha}(\R^d \times (0,\boundY))} \leq C_{\boundY,\alpha} \|\nabla U\|_{L^2_\alpha(\R^d\times \R_+)}.  
\end{equation}
Details of the proof are in \cite[Appendix~B]{FMMS21}.

Existence and uniqueness of solutions of \eqref{eq:minimization} follows from 
the Lax-Milgram Lemma since, for $F \in L^2_{-\alpha}(\R^d \times (0,\boundY))$ and $f \in H^{-s}(\Omega)$, 
the map $U \mapsto \int_{\R^d \times (0,\boundY)} F U + \int_\Omega f \operatorname{tr} U$ 
in \eqref{eq:calFdef} extends to a bounded linear functional on ${\BL}^1_{\alpha,0,\Omega}$.
In view of \eqref{eq:lemma:properties-of-H1alpha} and the trace estimate 
\eqref{eq:L3.8-KarMel19},
the minimization problem \eqref{eq:minimization} 
admits by Lax-Milgram a unique solution $U \in {\BL}^1_{\alpha,0,\Omega}$
with the {\sl a priori} estimate 
\begin{align}
\label{eq:energy-estimate} 
\|\nabla U\|_{L^2_{\alpha}(\R^d \times \Rpos)} \leq C \left[ \|F\|_{L^2_{-\alpha}(\R^d \times (0,\boundY))} + \|f\|_{H^{-s}(\Omega)}\right] 
\end{align}
with constant $C$ dependent on $s\in (0,1)$, $\boundY > 0$, and $\Omega$. 

The Euler-Lagrange equations formally satisfied by the solution $U$ of \eqref{eq:minimization} are: 
\begin{subequations}
\label{eq:extension3D}
\begin{align}
\label{eq:extension3D-a}
 -\div (y^\alpha \nabla U) &= F  &&\text{in} \; \R^d \times (0,\infty), \\ 
\label{eq:extension3D-b}
 \partial_{n_\alpha} U(\cdot,0) & = f  &&\text{in} \; \Omega, \\
\label{eq:extension3D-c}
\operatorname{tr} U & = 0 &&\text{on $\Omega^c$},
\end{align}
\end{subequations}
where $\partial_{n_\alpha} U(x,0) = - d_s\lim_{y \rightarrow 0}  y^\alpha \partial_y U(x,y)$ 
and we implicitly extended $F$ to $\R^d \times \Rpos$ by zero.
In view of \eqref{eq:DtNoperator} together with the fractional PDE $(-\Delta)^s u = f$, 
this is a Neumann-type Caffarelli-Silvestre extension problem with an additional source $F$.

\begin{remark}\label{remk:local-extension3D}
The system \eqref{eq:extension3D} is understood in a weak sense, i.e., 
to find $U \in {\BL}^1_{\alpha,0,\Omega}$ such that 
\begin{equation}
\label{eq:weak-form}
\forall V \in {\BL}^1_{\alpha,0,\Omega} \colon \quad 
b(U,V) = \int_{\R^d \times \Rpos}  F V \,dx\,dy + \int_\Omega f \operatorname{tr} V\,dx. 
\end{equation}
Due to \eqref{eq:lemma:properties-of-H1alpha}, the integral $\int_{\R^d \times \Rpos}  F V \,dx\,dy $ is well-defined.

%
\eremk
\end{remark}
\section{Solution regularity for the CS extension}
\label{sec:RegExt}
As in \cite{FMMS21}, 
we prove analytic regularity of solutions of \eqref{eq:intro-eq} in polyhedral $\Omega\subset \R^3$
via local (higher order) regularity results for solutions to the Caffarelli-Silvestre extension problem 
in Section~\ref{sec:CSExt-sub}.
These were obtained in \cite[Sec.3]{FMMS21} for general space dimension $d \geq 2$.
We re-state these for further reference for $d=3$.

\subsection{Global regularity: a shift theorem}
\label{sec:GlRegShThm}
The following lemma provides additional regularity of the extension problem in the $x$--direction. 
Its proof is based on the difference quotient technique developed in \cite{Savare}, and was already
    used in our analysis in two spatial variables \cite{FMMS21} and in \cite{BN21} 
to establish a regularity shift in Besov scales for the Dirichlet fractional Laplacian.

For functions $U$, $F$, $f$, it is convenient to introduce the abbreviation
\begin{equation}
\label{eq:CUFf} 
N^2(U,F,f)\coloneqq  \|\nabla U\|_{L^2_\alpha(\R^d \times \Rpos)} 
\left(
\|\nabla U\|_{L^2_{\alpha}(\R^d \times \Rpos)} 
+ 
\|F\|_{L^2_{-\alpha}(\R^d \times (0,\boundY))}
+ 
\|f\|_{H^{1-s}(\Omega)}\right). 
\end{equation}
In view of the {\sl a priori} estimate \eqref{eq:energy-estimate}, 
we have the simplified bound (with updated constant $C$)
\begin{equation}
\label{eq:CUFf-simplified}
N^2(U,F,f) 
\leq 
C \left( \|f\|^2_{H^{1-s}(\Omega)} + \|F\|^2_{L^2_{-\alpha}(\R^d\times(0,\boundY))} \right).
\end{equation}

\begin{lemma}
\label{lem:regularity3D}
Let $\Omega \subset \R^3$ be a bounded Lipschitz domain, and
let $B_{\widetilde R} \subset \R^3$ be a ball with 
$\Omega \subset B_{\widetilde R}$. 
For $t \in [0,1/2)$, 
there is $C_t > 0$ (depending only on $s$, $t$, $\Omega$, $\widetilde R$, and $\boundY$) 
such that for $f\in C^\infty(\overline{\Omega})$,
$F \in L^2_{-\alpha}(\R^{3} \times (0,\boundY))$ 
the solution $U$ of \eqref{eq:minimization} satisfies 
\begin{align*}
\int_{\Rpos} y^\alpha \norm{\nabla U(\cdot, y)}_{H^t(B_{\widetilde R})}^2 dy 
\leq C_t N^2(U,F,f)
\end{align*}
with $N^2(U,F,f)$ given by \eqref{eq:CUFf}. 
\end{lemma}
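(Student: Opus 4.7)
The plan is to adapt Savar\'e's difference-quotient technique \cite{Savare}, as employed in \cite{FMMS21,BN21}, to the Caffarelli-Silvestre extension in three space dimensions. Fix a tangential unit vector $e_i \in \R^3$ and a cutoff $\eta \in C^\infty_c(\R^3)$ with $\eta \equiv 1$ on $B_{\widetilde R}$ and $\supp \eta$ compactly contained in a slightly larger ball. For $h \in \R$ with $|h|$ sufficiently small, introduce the shift $\tau_h U(x,y) := U(x+h e_i,y)$ and the finite difference $D_h U := \tau_h U - U$. The target of the first step is a discrete bound of the form
\[
\int_{\Rpos} y^\alpha \|D_h \nabla U(\cdot,y)\|_{L^2(B_{\widetilde R})}^2\, dy \leq C_t\, |h|^{2t}\, N^2(U,F,f), \qquad t < 1/2.
\]

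The natural candidate $V = D_{-h}(\eta^2 D_h U)$ is not directly admissible in \eqref{eq:weak-form} because the translation $\tau_h U$ does not in general satisfy the zero trace constraint on $\Omega^c$; this is precisely the obstacle Savar\'e's construction overcomes, by symmetrizing the bilinear form over forward and backward translations, exploiting $b(U,V) = b(V,U)$ together with a discrete integration by parts in $x$ to trade trace admissibility of the test function for that of $U$ itself. Inserting the symmetrized test function yields a coercive contribution $\int y^\alpha |\nabla(\eta D_h U)|^2$ on the left; commutator terms involving $\nabla\eta \otimes \nabla U$ are absorbed via Young's inequality into the coercive term plus a controllable multiple of $\|\nabla U\|_{L^2_\alpha}^2$. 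The data side is handled by duality: the volume term by $\|F\|_{L^2_{-\alpha}}$ together with the Poincar\'e-type bound \eqref{eq:lemma:properties-of-H1alpha}, and the boundary term by pairing $f \in H^{1-s}(\Omega)$ against the trace of the test function and using \eqref{eq:L3.8-KarMel19}. The threshold $t < 1/2$ emerges at this stage as the integrability limit in $h$ for the boundary duality.

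The discrete bound is then converted into the Aronszajn-Slobodeckij seminorm in the standard way: dividing by $|h|^{3+2t}$, integrating $h$ over a small ball in $\R^3$, summing over the three coordinate directions, and exchanging the order of integration in $(h,y)$ via Fubini reproduces $\int_{\Rpos} y^\alpha |\nabla U(\cdot,y)|_{H^t(B_{\widetilde R})}^2\, dy$ on the left. Combined with the base estimate \eqref{eq:energy-estimate} to control the $L^2_\alpha$-part of the full $H^t$-norm, this delivers the claimed bound. The principal obstacle is the first step, namely the construction of a symmetrized, cutoff test function that remains admissible despite the nonlocal Dirichlet constraint on $\Omega^c$ and still preserves enough coercivity to push the gained regularity all the way up to (but not including) $t = 1/2$; the exclusion $t = 1/2$ is sharp, reflecting the $L^2$-integrability threshold for the boundary singularities of fractional-Laplace Dirichlet problems. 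Once this step is cleared, the remaining manipulations parallel those of the two-dimensional argument in \cite{FMMS21} essentially verbatim, with only notational adjustments accommodating $d = 3$.
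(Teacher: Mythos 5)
The paper's own proof of this lemma is a one-sentence citation: it simply refers to \cite[Lemma~3.3]{FMMS21} ``with $d=3$'' and does not reproduce an argument. Your proposal correctly names the underlying method (Savar\'e's difference-quotient technique, as flagged in the paper's own lead-in to the lemma), correctly identifies the central obstruction (the shift $\tau_h U$ fails the zero-trace constraint on $\Omega^c$), and the back end --- dividing the discrete bound by $|h|^{3+2t}$, integrating in $h$, summing over directions, using Fubini to recover the Aronszajn--Slobodeckij seminorm, and then adding the $L^2_\alpha$-piece via \eqref{eq:energy-estimate} --- is standard and correctly described.

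However, what you write about how Savar\'e resolves the non-admissibility is not accurate, and this is precisely the step you yourself call the ``principal obstacle.'' The phrase ``symmetrizing the bilinear form over forward and backward translations, exploiting $b(U,V)=b(V,U)$ together with a discrete integration by parts in $x$ to trade trace admissibility of the test function for that of $U$'' does not produce an admissible test function. If $V=D_{-h}(\eta^2 D_h U)$, then $\tr V$ is a linear combination of $u(\cdot\pm h e_i)$ weighted by translates of $\eta^2$, and in any fixed coordinate direction this does not vanish on $\Omega^c$ for a general polyhedron; neither the symmetry of $b$ nor a discrete integration by parts changes that. Note that the forward--backward structure $D^{-\tau}_\bullet(\zeta^2 D^\tau_\bullet \,\cdot\,)$ is exactly what the paper uses in the Caccioppoli inequality, Lemma~\ref{lem:CaccType3D}, but there it works precisely because $\zeta_x$ is compactly supported in $B_R\subset\Omega$ --- an escape that is unavailable in a \emph{global} shift theorem where the estimate must hold up to $\partial\Omega$. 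What Savar\'e's argument actually exploits is the Lipschitz structure of the boundary: near each point of $\partial\Omega$ there is a uniform cone of directions in which small translates of $\Omega$ land inside $\Omega$, one uses one-sided (inward) difference quotients within such cones after a finite partition of unity near $\partial\Omega$, and a nonlinear interpolation argument upgrades the resulting one-sided discrete estimates to the Besov/Sobolev shift. Correspondingly, the threshold $t<1/2$ is the shift permitted by $C^{0,1}$ boundary regularity, not ``the integrability limit in $h$ for the boundary duality.'' As it stands, the proposal names the right toolbox and the right obstacle, but does not carry out the key step, so it is a plan rather than a proof of the lemma.
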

This is \cite[Lemma 3.3]{FMMS21} with $d=3$.
\subsection{Caccioppoli inequalities for the CS extension} 
\label{sec:IntReg}
Our regularity will be based on Caccioppoli inequalities 
for solutions to the extension problem \eqref{eq:extension3D}.
These inequalities were derived in \cite{FMMS21}, 
but we also require them for some more general cases of tangential derivatives. 
Roughly speaking, they imply quantitative control of 
second order derivatives of $U$ on some local set (balls or sets introduced below)
in terms of first order derivatives on a (slightly) enlarged set.

\begin{definition}[Half ball, wedge]
  We call the intersection between a ball and a half
  space whose boundary passes through the center of the ball a \emph{half ball}.
  
  We call the intersection between a ball and two non-identical half
  spaces with boundaries passing through the center of the ball a \emph{wedge}.
\end{definition}

\begin{lemma}[Caccioppoli inequalities]
\label{lem:CaccType3D} 
Let $B_R(x_0)$ be an open ball with radius $R>0$ 
centered at $x_0 \in \overline{\Omega}\setminus \cV$.
Let $R>0$ be so small that
  \begin{enumerate}[label=(\roman*)]
  \item\label{item:ball} $B_R(x_0)\subset \Omega$, if $x_0 \in \Omega$;
  \item\label{item:halfball} $B_R(x_0) \cap \Omega$ is a half ball, if $x_0 \in \bff$;
  \item\label{item:wedge} $B_R(x_0) \cap \Omega$ is a wedge, if $x_0 \in \bfe$.
  \end{enumerate}
%
For $\theta \in (0,\infty]$ and $c \in (0,1]$ denote by 
$B_{cR}^\theta := (B_{cR}(x_0) \cap \Omega)\times (0,\theta) \subset \R^3 \times \R^+$ 
the corresponding concentrically scaled and extended ball/half-ball/wedge, respectively.

Let $U$ satisfy \eqref{eq:extension3D}
with given data $f$ and $F$ with 
$\supp(F)\subset \R^{3}\times [0, \boundY]$ and let $\theta'>\theta$.

Then, for $\bullet \in \{x_i: i=1,2,3\}$ in case \ref{item:ball},
$\bullet\in\{\fipar: i=1,2\}$ in case \ref{item:halfball}, and $\bullet=\epar$ in
case \ref{item:wedge}, 
there is $C_{\rm int} > 0$ independent of $R$ and $c,\theta,\theta'$ such that
\begin{align}
\label{eq:CaccType3D} 
  \norm{D_{\bullet}(\nabla U)}^{2}_{L^2_\alpha(B_{cR}^\theta)} 
&\leq \CacInt^{2} \Big( (((1-c)R)^{-2}+(\theta'-\theta)^{-2}) \norm{\nabla U}_{L^2_\alpha(B_R^{\theta'})}^{2}
\nonumber \\ 
&\qquad\quad + \norm{D_{\bullet} f}^{2}_{L^2(B_R)} + \norm{F}^{2}_{L^2_{-\alpha}(B_R^+)}\Big). 
\end{align}
\end{lemma}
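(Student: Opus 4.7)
The plan is to adapt the difference-quotient Caccioppoli argument of \cite{FMMS21} to each of the three geometric settings, exploiting that in all cases the direction $\bullet$ is \emph{tangential} to every face of $\partial\Omega$ that meets $B_R(x_0)$. Fix a smooth cutoff $\eta$ with $\eta\equiv 1$ on $B_{cR}(x_0)\times[0,\theta]$, $\supp\eta\subset B_R(x_0)\times[0,\theta')$, $\|\nabla_x\eta\|_\infty \lesssim ((1-c)R)^{-1}$, and $\|\partial_y\eta\|_\infty \lesssim (\theta'-\theta)^{-1}$. For $|h|$ small, introduce the tangential difference quotient $D^h_\bullet W(x,y) := (W(x+h\bullet,y)-W(x,y))/h$ and the candidate test function
$$V_h := -D^{-h}_\bullet\bigl(\eta^2\, D^h_\bullet U\bigr).$$
The key geometric point is that $V_h \in {\BL}^1_{\alpha,0,\Omega}$: in case \ref{item:ball} there is no boundary inside $B_R(x_0)$; in \ref{item:halfball} both vectors $\fipar$ lie in the tangent plane to $\bff$, so the shift $x\mapsto x+h\bullet$ preserves the half-ball structure; in \ref{item:wedge} the direction $\epar$ is parallel to $\bfe$ and hence to both faces in $\cF_{\bfe}$, so the wedge is preserved. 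In each case the Dirichlet condition $\operatorname{tr} U=0$ on $\Omega^c$ is stable under this shift for $|h|$ small enough.

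Inserting $V_h$ into the weak formulation \eqref{eq:weak-form} and using that $\bullet$ is orthogonal to the $y$-direction (so the weight $y^\alpha$ is invariant under $D^h_\bullet$), discrete integration-by-parts in $\bullet$ yields
$$b(U,V_h) = b(D^h_\bullet U,\eta^2 D^h_\bullet U) = \int y^\alpha \eta^2 |\nabla D^h_\bullet U|^2\,dx\,dy + 2\int y^\alpha \eta\, (D^h_\bullet U)\,\nabla\eta\cdot \nabla D^h_\bullet U\,dx\,dy.$$
The first term is the quantity to be estimated; the cross-term is absorbed via Young's inequality, leaving a contribution of order $(\|\nabla_x\eta\|_\infty^2+\|\partial_y\eta\|_\infty^2)\|D^h_\bullet U\|_{L^2_\alpha(B_R^{\theta'})}^2$, which is controlled by the standard difference-quotient bound $\|D^h_\bullet U\|_{L^2_\alpha}\le\|\nabla U\|_{L^2_\alpha(B_R^{\theta'})}$, precisely producing the $((1-c)R)^{-2}+(\theta'-\theta)^{-2}$ prefactor in \eqref{eq:CaccType3D}.

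For the data terms, I would treat $f$ and $F$ differently, which is what allows $\|F\|_{L^2_{-\alpha}}$ (rather than $\|D_\bullet F\|_{L^2_{-\alpha}}$) to appear on the right. A further tangential discrete integration-by-parts on the flat trace gives
$$\int_\Omega f\,\operatorname{tr} V_h\,dx = \int_{B_R} (D^h_\bullet f)\,\eta^2\,\operatorname{tr} D^h_\bullet U\,dx,$$
which by Cauchy--Schwarz and the weighted trace inequality \eqref{eq:L3.8-KarMel19} applied to $\eta D^h_\bullet U$ is controlled by $\|D^h_\bullet f\|_{L^2(B_R)}$ times a quantity that can be absorbed. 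The $F$-term, by contrast, is bounded \emph{without} shifting the difference quotient onto $F$:
$$\Bigl|\int_{B_R^+} F\,V_h\,dx\,dy\Bigr| \le \|F\|_{L^2_{-\alpha}(B_R^+)}\,\|V_h\|_{L^2_\alpha(B_R^+)} \lesssim \|F\|_{L^2_{-\alpha}(B_R^+)}\bigl(\|\nabla\eta\|_\infty\|D^h_\bullet U\|_{L^2_\alpha} + \|\eta\nabla D^h_\bullet U\|_{L^2_\alpha}\bigr),$$
using $\|D^{-h}_\bullet W\|_{L^2_\alpha} \le \|\partial_\bullet W\|_{L^2_\alpha}$ since $\bullet$ is orthogonal to $y$. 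A final Young inequality absorbs the highest-order piece back into the LHS. Collecting estimates, choosing the absorption constants small, and passing $h\to 0$ yields \eqref{eq:CaccType3D}. The principal obstacle is the admissibility of $V_h$: the geometric hypotheses on $B_R(x_0)$ (half-ball, respectively wedge) together with the specific choice of the tangential direction $\bullet$ are precisely what guarantees $V_h\in{\BL}^1_{\alpha,0,\Omega}$; any non-tangential direction would generically destroy the Dirichlet condition outside $\Omega$ and invalidate the whole scheme from the outset.
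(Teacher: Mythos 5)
Your proposal is correct and follows essentially the same argument as the paper's proof: a Caccioppoli/difference-quotient scheme with the test function $V_h = -D^{-h}_\bullet(\eta^2 D^h_\bullet U)$, with the product cutoff in $x$ and $y$, the discrete integration by parts exploiting that $\bullet$ is orthogonal to the $y$-direction (so $y^\alpha$ commutes with the shift), the tangentiality argument to justify that $V_h$ is admissible in ${\BL}^1_{\alpha,0,\Omega}$ in all three geometric cases, shifting one difference quotient onto $f$ (via the flat trace) but leaving $F$ intact, and absorbing the highest-order terms by Young's inequality before letting $h\to 0$. The only cosmetic difference is in the $f$-term: you apply Cauchy--Schwarz in $L^2$ and then the trace bound \eqref{eq:L3.8-KarMel19}, whereas the paper pairs $\zeta_x D^\tau_\bullet f$ with $\zeta_x D^\tau_\bullet u$ in the $H^{-s}$--$H^s$ duality and only at the very end invokes the embedding $L^2\hookrightarrow H^{-s}$ to reach $\|D_\bullet f\|_{L^2(B_R)}$; both routes land in the same place.
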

\begin{proof}
  We use a cut-off function $\zeta = \zeta(x,y)$ with $0\leq \zeta \leq 1$ and product structure 
\begin{align*}
\zeta(x,y) = \zeta_x(x) \zeta_y(y), \qquad \zeta_x \in C_0^\infty(B_R), \quad \zeta_y \in C_0^\infty(-\theta^\prime,\theta^\prime).
\end{align*}
Here, $\zeta_x$ is such that 
$\zeta_x \equiv 1$ on $B_{cR}$ as well as 
  $\|\nabla \zeta_x\|_{L^\infty(B_R)} \leq C_\zeta ((1-c)R)^{-1}$ for some $C_\zeta > 0$  independent of $c$, $R$.  
Similarly, $\zeta_y$ satisfies $\zeta_y \equiv 1$ on $(-\theta ,\theta)$ as well as 
$\|\partial_y^j \zeta_y\|_{L^\infty(-\theta^\prime,\theta^\prime)} \leq C_\zeta {(\theta^\prime-\theta )}^{-j}$ for $j\in\{0,1\}$ 
with a constant $C_{\zeta}$ independent of $R$, $\theta$, $\theta'$. 
Hence $\|\nabla \zeta\|_{L^\infty(\R^3 \times \R_+)} \lesssim ((1-c) R)^{-1} + (\theta'-\theta)^{-1}$. 

Let $e_{\bullet}$ be the already defined unit vectors for $\bullet \in \{\fipar,\epar\}$ and $e_{x_i}$ be the unit vector in the $x_i$-coordinate. 
Let $\tau\in \R \backslash\{0\}$.
We define 
the difference quotient $D_{\bullet}^\tau$ as the operator such that,
for all 
$w:\R^3\times\R\to \R$,
\begin{align*}
(D_{\bullet}^\tau w)(x, y) \coloneqq  \frac{w(x+\tau e_{\bullet}, y)-w(x, y)}{\tau}, \qquad \forall x\in \R^3, y\in \R^+.
\end{align*}
We recall that by, e.g., \cite[Sec.~{6.3}]{evans98}, we have uniformly in $\tau$
\begin{equation}
  \label{eq:Dtau-partial}
  \|D^\tau _{\bullet} v\|_{L^2_{\alpha} (\R^3 \times \Rpos)} \lesssim \|\nabla v\|_{L^2_{\alpha}(\R^3\times\Rpos)}.
\end{equation}
For $|\tau|$ sufficiently small, consider the function $V = D_{\bullet}^{-\tau}(\zeta^2 D_{\bullet}^\tau U)$.
We claim $V\in\BLzero$, i.e.,
\begin{equation*}
  \tr V = 0\text{ on }\Omega^c, \qquad
V\in L^2_{loc}(\R^3\times\Rpos), \qquad
\nabla V \in L^2_\alpha(\R^3\times \Rpos).
\end{equation*}
The first property is true as long as $\tau$ is small
enough, due to the compact support of $\zeta_x$ in $B_R\subset \Omega$.
The second property follows from $\zeta\in L^\infty(\R^3\times\Rpos)$ and $V\in L^2_{loc}(\R^3\times\Rpos)$.
To show the third one, note that derivatives commute with the difference
quotient operator. It follows that
\begin{equation*}
  \partial_y V =  D_{\bullet}^{-\tau}(\zeta^2 D_{\bullet}^\tau\partial_y U)). 
\end{equation*}
Hence, $\partial_y V\in L^2_\alpha(\R^3\times\Rpos)$ since $\partial_yU \in
L^2_\alpha(\R^3\times \Rpos)$ and $\zeta$ is bounded.

Similarly, for any $j\in \{1,2,3\}$,
\begin{equation*}
  \partial_{x_j} V =  2D_{\bullet}^{-\tau}\big(\zeta(\partial_{x_j}\zeta)D_{\bullet}^{\tau} U\big) +  D_{\bullet}^{-\tau}(\zeta^2 D_{\bullet}^{\tau}\partial_{x_j} U) \eqqcolon (I) + (II).
\end{equation*}
We have
\begin{equation*}
  (I) = \frac{2}{\tau}\bigg[\big(\zeta\partial_{x_j}\zeta\big)(x-\tau e_{\bullet}, y) D_{\bullet}^{-\tau} U +  \big(\zeta\partial_{x_j}\zeta\big)(x,y) D_{\bullet}^{\tau} U\bigg].
\end{equation*}
Using the boundedness of $\zeta\partial_{x_j}\zeta$ and
since $D_{\bullet}^{-\tau} U\in L^2_\alpha(\R^3\times\Rpos)$ and 
$D_{\bullet}^{\tau} U\in L^2_\alpha(\R^3\times\Rpos)$ by \eqref{eq:Dtau-partial}, we obtain
that $(I)\in L^2_\alpha(\R^3\times\Rpos)$. In addition, by the boundedness of
$\zeta$ and since $U\in \BLzero$ implies $\partial_{x_j}U \in
L^2_\alpha(\R^3\times\Rpos)$,
we also obtain $(II)\in L^2_\alpha(\R^3\times\Rpos)$. We conclude that $\nabla
V\in L^2_{\alpha}(\R^3\times\Rpos)$. This implies $V\in\BLzero$.

We can therefore choose $V$ as a test function 
in the weak formulation of \eqref{eq:extension3D} 
and calculate
\begin{align*}
\operatorname{tr} V = -\frac{1}{\tau^2} \Big(\zeta_x^2(x-\tau e_{\bullet})(u(x)-u(x-\tau e_{\bullet}))+\zeta_x^2(x)(u(x)-u(x+\tau e_{\bullet}))\Big) = D_{\bullet}^{-\tau}(\zeta_x^2 D_{\bullet}^\tau u).
\end{align*}

Integration by parts in \eqref{eq:extension3D} tested with $V$
over $\R^3 \times \Rpos$ 
and 
using that the Neumann trace (up to the constant $d_s$ from \eqref{eq:DtNoperator})
realizes the fractional Laplacian gives
\begin{align*}
&\int_{\R^3 \times \Rpos} F V \, dx \, dy - \frac{1}{d_s} \int_{\R^3} (-\Delta)^s u  \operatorname{tr}V \, dx = 
\int_{\R^3 \times \Rpos} y^{\alpha} \nabla U \cdot\nabla V  dx\, dy \\ 
&\qquad\qquad\qquad= \int_{\R^3 \times \Rpos} D_{\bullet}^\tau (y^\alpha \nabla U) \cdot \nabla (\zeta^2 D_{\bullet}^\tau U) \,dx\, dy \\ 
&\qquad\qquad\qquad=  \int_{B_R^+} y^\alpha D_{\bullet}^\tau (\nabla U) \cdot \left(\zeta^2 \nabla D_{\bullet}^\tau U + 2\zeta \nabla \zeta D_{\bullet}^\tau U\right) dx \, dy\\
&\qquad\qquad\qquad =
  \int_{B_R^+} y^\alpha \zeta^2  D_{\bullet}^\tau (\nabla U) \cdot D_{\bullet}^\tau (\nabla U)\, dx\,dy + 
  \int_{B_R^+} 2 y^\alpha\zeta \nabla\zeta \cdot D_{\bullet}^\tau (\nabla U)  D_{\bullet}^\tau U\, dx\, dy. 
\end{align*}

Using the equation $(-\Delta)^s u = f$ on $\Omega$, Young's inequality, 
and the Poincar\'e inequality together with the trace estimate \eqref{eq:L3.8-KarMel19}, 
we get the existence of constants $C_j>0$, $j \in \{1,\dots,5\}$, such that
\begin{align*}
  \norm{\zeta D_{\bullet}^\tau(\nabla U)}_{L^2_\alpha(B_R^+)}^2
& \leq 
    C_1 \bigg(
\abs{  \int_{B_R^+} y^\alpha \zeta \nabla\zeta \cdot  D_{\bullet}^\tau (\nabla U) D_{\bullet}^\tau U \, dx \, dy} + 
  \abs{\int_{\R^3 \times \Rpos} F \; D_{\bullet}^{-\tau} \zeta^2 D_{\bullet}^\tau U \, dx \, dy}  \\
  & \qquad+ \abs{\int_{\R^3} D_{\bullet}^{\tau} f  \zeta_x^2 D_{\bullet}^\tau u \, dx}\bigg)\\
  &\leq
  \frac{1}{4}  \norm{\zeta D_{\bullet}^\tau(\nabla U)}_{L^2_\alpha(B_R^+)}^2 +  
  C_2 \bigg(\norm{\nabla \zeta}_{L^{\infty}(B_R^+)}^2\norm{D_{\bullet}^\tau U}_{L^2_\alpha(B_R^{\theta'})}^2 \\
 & \qquad +  \norm{F}_{L^2_{-\alpha}(B_R^+)} \|\nabla (\zeta^2 D^\tau_{\bullet} U)\|_{L^2_{\alpha}(B^+_R)} 
 + \norm{\zeta_x D_{\bullet}^\tau f}_{H^{-s}(\Omega)} \norm{\zeta_x D^\tau_{\bullet} u}_{H^s(\R^3)}  \bigg)\\
  & \leq
\frac{1}{2}  \norm{\zeta D_{\bullet}^\tau(\nabla U)}_{L^2_\alpha(B_R^+)}^2 
+  
C_3 \bigg(\|\nabla \zeta\|_{L^\infty(B_R^+)}^2 \|\nabla U\|_{L^2_{\alpha}(B^{\theta'}_R)}^2 + \|F\|^2_{L^2_{-\alpha}(B^+_R)} 
 \\
&\qquad\qquad + \norm{\zeta_x D_{\bullet}^\tau f}_{H^{-s}(\Omega)} \abs{\zeta_x D^\tau_{\bullet} u}_{H^s(\R^3)}\bigg) 
\\
  \overset{\eqref{eq:L3.8-KarMel19}}
  & \leq
\frac{1}{2}  \norm{\zeta D_{\bullet}^\tau(\nabla U)}_{L^2_\alpha(B_R^+)}^2 
  +  
C_4 \bigg(\|\nabla \zeta\|_{L^\infty(B_R^+)}^2 \|\nabla U\|_{L^2_{\alpha}(B^{\theta'}_R)}^2 + \|F\|^2_{L^2_{-\alpha}(B^+_R)}  
\\
&\qquad\qquad + \norm{\zeta_x D_{\bullet}^\tau f}_{H^{-s}(\Omega)} 
   \norm{\nabla (\zeta D^\tau_{\bullet} U)}_{L^2_{\alpha}( \R^3 \times \Rpos)} \bigg)
\\
 &\leq 
   \frac{3}{4}  \norm{\zeta D_{\bullet}^\tau(\nabla U)}_{L^2_\alpha(B_R^+)}^2
   \\ & \qquad \qquad
+  C_5 \bigg(\|\nabla \zeta\|_{L^\infty(B_R^+)}^2 \|\nabla U\|_{L^2_{\alpha}(B^{\theta'}_R)}^2 + \|F\|^2_{L^2_{-\alpha}(B^+_R)} 
 + \norm{\zeta_x D_{\bullet}^\tau f}^2_{H^{-s}(\Omega)}  \bigg).  
\end{align*}
Absorbing the first term of the right-hand side in the left-hand side and taking the limit $\tau \rightarrow 0$, 
we obtain the sought inequality for the second derivatives 
since $\norm{\nabla \zeta}_{L^{\infty}(B_R^+)} \lesssim ((1-c)R)^{-1}+ (\theta'-\theta)^{-1}$.
We conclude using
$\|\zeta_x D_{\bullet} f\|_{H^{-s}(\Omega)} \leq C_{\rm loc} \|D_{\bullet} f\|_{L^2(B_R)}$ 
for some $C_{\rm loc} > 0$ independent of $R$, $c$, and $f$.
\end{proof}

The Caccioppoli inequality in Lemma~\ref{lem:CaccType3D} 
can be iterated on concentric balls to provide control of higher order derivatives 
by lower order derivatives locally. 

\begin{corollary}[High order interior Caccioppoli inequality]
\label{cor:CaccHighInt} 
Let $B_R(x_0)\subset \Omega$ be an open ball with radius $R>0$ centered at $x_0 \in \Omega$. 
For $\theta \in (0,\infty]$ and $c \in (0,1]$ denote by $B_{cR}^\theta := B_{cR}(x_0)\times (0,\theta)$ 
the corresponding concentrically scaled and extended ball. 
Let $U$ satisfy \eqref{eq:extension3D}
with given data $f$ and $F$ with $\supp(F)\subset \R^3\times [0, \boundY]$ and let $\theta'>\theta$.

Then, 
there is $\gamma > 0$ 
such that 
for all $\beta \in \mathbb{N}_0^3$ 
we have with $p = \betam  $ 
\begin{multline}
  \label{eq:CaccHighBound-int}
\norm{\dbeta\nabla U}^2_{L^2_\alpha(B_{cR}^\theta)} \leq
 (\gamma p)^{2p} R^{-2p}  \norm{\nabla U}^2_{L^2_\alpha(B_R^{\theta'})} \\
  + \sum_{j=1}^p (\gamma p)^{2(p-j)} R^{2(j-p)}\left(\max_{\substack{\etam=j\\ \eta\leq \beta}}\norm{\deta f}^2_{L^2(B_R)} 
  +\max_{\substack{\etam=j-1\\ \eta\leq \beta}} \norm{\deta F}^2_{L^2_{-\alpha}(B_{R}^+)}\right).
\end{multline}
\end{corollary}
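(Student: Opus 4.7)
The plan is to prove \eqref{eq:CaccHighBound-int} by iterating the Caccioppoli inequality \eqref{eq:CaccType3D} of Lemma~\ref{lem:CaccType3D} on a sequence of concentrically nested balls and time–strips interpolating between $B^\theta_{cR}$ and $B^{\theta'}_R$. Since we are in the interior case~\ref{item:ball} of that lemma, all three Cartesian directions are admissible choices for $\bullet$, and, crucially, any Cartesian derivative commutes with $\operatorname{div}(y^\alpha \nabla\,\cdot\,)$ because the coefficient $y^\alpha$ is independent of $x$. Hence, for any multi-index $\eta \le \beta$, the function $\partial^\eta U$ solves the degenerate extension problem \eqref{eq:extension3D} with source $\partial^\eta F$ and Neumann datum $\partial^\eta f$ on $B_R$, and the exterior Dirichlet condition plays no role since $\overline{B_R(x_0)} \subset \Omega$.

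The base case $p = 0$ is trivial, using the convention $0^0 = 1$ and monotonicity of the $L^2_\alpha$--norm in the integration domain. For $p \ge 1$, fix $\beta$ with $|\beta| = p$, write it as an ordered sum $\beta = e_{i_1} + \cdots + e_{i_p}$ of coordinate unit vectors with partial sums $\beta^{(k)} := e_{i_1} + \cdots + e_{i_k}$, and introduce the equispaced radii and heights
\begin{equation*}
  R_k := cR + k(1-c)R/p, \qquad \theta_k := \theta + k(\theta'-\theta)/p, \qquad k = 0, \dots, p,
\end{equation*}
so that both gaps $R_k - R_{k-1}$ and $\theta_k - \theta_{k-1}$ scale like $1/p$.

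At step $k \in \{1, \dots, p\}$, we apply \eqref{eq:CaccType3D} with $\bullet = x_{i_{p-k+1}}$ to the function $\partial^{\beta^{(p-k)}} U$ on the nested pair $B^{\theta_{k-1}}_{R_{k-1}} \subset B^{\theta_k}_{R_k}$. This yields an inequality of the form
\begin{equation*}
  \| \partial^{\beta^{(p-k+1)}} \nabla U \|_{L^2_\alpha(B^{\theta_{k-1}}_{R_{k-1}})}^2 \le A_k\, \| \partial^{\beta^{(p-k)}} \nabla U \|_{L^2_\alpha(B^{\theta_k}_{R_k})}^2 + \CacInt^2 \bigl( \| \partial^{\beta^{(p-k+1)}} f \|^2_{L^2(B_R)} + \| \partial^{\beta^{(p-k)}} F \|^2_{L^2_{-\alpha}(B_R^+)} \bigr),
\end{equation*}
with geometric prefactor $A_k \le \CacInt^2 C_0^2\, p^2/R^2$, where $C_0 := \max\bigl((1-c)^{-1}, R/(\theta'-\theta)\bigr)$.

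Telescoping these $p$ estimates from $k = 1$ down to $k = p$, the data term produced at step $k$ is multiplied by $\prod_{\ell = 1}^{k-1} A_\ell \le (\CacInt C_0\, p/R)^{2(k-1)}$, while the final $\|\nabla U\|^2$ factor is multiplied by $\prod_{\ell = 1}^{p} A_\ell \le (\CacInt C_0\, p/R)^{2p}$. Reindexing $j := p - k + 1$ (so that $|\beta^{(p-k+1)}| = j$ and $|\beta^{(p-k)}| = j-1$ with $\beta^{(\cdot)} \le \beta$) and passing to the maximum over admissible multi-indices $\eta \le \beta$, one obtains exactly \eqref{eq:CaccHighBound-int} with $\gamma := C\,\CacInt C_0$. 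The only real obstacle is careful bookkeeping of the telescoping constants and indices; no new analytic input beyond Lemma~\ref{lem:CaccType3D} and the aforementioned commutation of Cartesian derivatives with the degenerate operator is required.
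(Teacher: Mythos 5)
Your proof is correct and follows essentially the same approach as the paper: iterate the Caccioppoli inequality of Lemma~\ref{lem:CaccType3D} over $p$ nested concentric balls and $y$-cylinders with equispaced radial and vertical gaps, telescope the resulting geometric prefactors and data terms, and use that Cartesian derivatives commute with the degenerate operator so $\partial^\eta U$ solves \eqref{eq:extension3D} with data $\partial^\eta F$, $\partial^\eta f$. The only small imprecision is that your $C_0$ contains $R/(\theta'-\theta)$, so as written $\gamma := C\,\CacInt C_0$ would depend on $R$; you should note, as the paper does, that $R \leq \operatorname{diam}\Omega$ (since $B_R(x_0)\subset\Omega$) so that $C_0$ and hence $\gamma$ depend only on $\Omega$, $c$, $\theta$, $\theta'$, and $\CacInt$.
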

\begin{proof}
We start by noting that the case $p = 0$ is trivially true since 
empty sums are zero and $0^0 = 1$. 
For $p \ge 1$, we fix a multi index $\beta$ such that $\betam = p$. 
  As the $x$-derivatives commute with the differential operator in
  \eqref{eq:extension3D}, we have that $\dbeta U$ solves 
equation \eqref{eq:extension3D} with data $\dbeta F$ and $\dbeta f$. 
For given $c>0$ and $0<\theta<\theta'$, let
\begin{align*}
  c_i &= c + (i-1)\frac{1-c}{p}, \quad 
  \theta_i = \theta + (i-1)\frac{\theta^\prime-\theta}{p}, \qquad i=1, \dots, p+1. 
\end{align*}
Then, we have $c_{i+1}R-c_iR = \frac{(1-c)R}{p}$, $c_1 R = cR$, and 
$c_{p+1} R = R$ as well as $\theta_{i+1}-\theta_i = \frac{\theta'-\theta}{p}$, $\theta_1=\theta$, 
and $\theta_{p+1} = \theta'$.  As $R \leq \operatorname{diam} \Omega$, 
we obtain 
\begin{align*}
(\theta_{i+1}-\theta_i)^{-2} + (c_{i+1} R - c_{i} R)^{-2} \leq C p^2 R^{-2}/(1-c)^2
\end{align*} 
with a constant $C > 0$ depending only on $\Omega$, $\theta$, $\theta'$. 
For ease of notation and without loss of generality, we assume that $\beta_1>0$. 
Applying Lemma~\ref{lem:CaccType3D} iteratively on the sets $B_{c_iR}^{\theta_i}$ for $i>1$ provides 
\begin{align*}
 & \norm{\dbeta \nabla U}^2_{L^2_\alpha(B_{c R}^{\theta})} \\
& \quad \leq \CacInt^{2} 
           \left(\frac{p^2}{(1-c)^2}R^{-2} \norm{\partial_x^{(\beta_1-1, \beta_2)}\nabla U}^2_{L^2_\alpha(B_{c_2 R}^{\theta_2})}
 + C_{\rm loc}^2 \norm{\dbeta  f}^2_{L^2(B_{c_2 R})} + \norm{\partial_x^{(\beta_1-1, \beta_2)}F}^2_{L^2_{-\alpha}(B_{c_2 R}^+)}  \right) \\
  & \quad \leq \left(\frac{\CacInt p }{(1-c)}\right)^{2p} R^{-2p} \norm{\nabla U}^2_{L^2_\alpha(B_{R}^{\theta'})}
    + C_{\rm loc}^2 \sum_{j=1}^p\left(\frac{\CacInt p }{(1-c)}\right)^{2p-2j}R^{-2p+2j}\max_{\etam=j}\norm{\deta  f}^2_{L^2(B_{c_{p-j+2} R})} \\
  &\qquad + \sum_{j=0}^{p-1}\left(\frac{\CacInt p }{(1-c)}\right)^{2p-2j-2}R^{-2p+2j+2}\max_{\etam=j}\norm{\deta F}^2_{L^2_{-\alpha}(B_{c_{p-j+1} R}^+)}. 
\end{align*}
Choosing $\gamma = \max(C_{\rm loc}^2, 1)\CacInt/(1-c)$ concludes the proof.
\end{proof}

The same arguments also apply to the other cases 
in the statement of Lemma~\ref{lem:CaccType3D} for sets near faces and edges.

\begin{corollary}[High order boundary Caccioppoli inequality on $\bff$] 
  \label{cor:CaccHighBound-f}
\mbox{ } \newline
Let $\bff\in \cF$ be an open face of $\partial\Omega$ and $x_0 \in \bff$. 
For $R>0$, let $B_R(x_0)\cap \Omega$ be an open half-ball. 
For $\theta \in (0,\infty]$ and $c \in (0,1]$ denote by $B_{cR}^\theta := (B_{cR}(x_0) \cap \Omega)\times (0,\theta)$ 
the corresponding concentrically scaled and extended half-ball. 
Let $U$ satisfy \eqref{eq:extension3D}
with given data $f$ and $F$ with $\supp(F)\subset \R^3\times [0, \boundY]$ and let $\theta'>\theta$.

Then, 
there is $\gamma>0$ such that for every for all $\betapar \in \N^2_0$ with $p = |\betapar|$,
\begin{multline}
  \label{eq:CaccHighBound-f}
\|\Dparf^{\betapar}\nabla U\|^2_{L^2_\alpha(B_{cR}^\theta)}  \leq 
  (\gamma p)^{2p} R^{-2p} \|\nabla U\|^2_{L^2_\alpha(B_R^{\theta'})}  \\
  + \sum_{j=1}^p (\gamma p)^{2(p-j)} R^{2(j-p)}\left(\max_{\substack{\etam=j\\\eta\leq \betapar}}\|\Dparf^{\eta} f\|^2_{L^2(B_R)} 
  +\max_{\substack{\etam=j-1\\\eta\leq \betapar}} \|\Dparf^{\eta} F\|^2_{L^2_{-\alpha}(B_{R}^+)}\right).
\end{multline}
\end{corollary}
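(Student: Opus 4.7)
The strategy is to mimic the proof of Corollary~\ref{cor:CaccHighInt} verbatim, but with Lemma~\ref{lem:CaccType3D} case \ref{item:ball} replaced by case \ref{item:halfball}, and with the generic coordinate derivative $\partial_{x_j}$ replaced by a tangential derivative $D_{\mathbf{f}_{j,\parallel}}$. The two facts that make this substitution legal are: (a) the vectors $\mathbf{f}_{1,\parallel}$ and $\mathbf{f}_{2,\parallel}$ are constant (the face is planar), so $\Dparf$ commutes with the differential operator $\div(y^\alpha \nabla \cdot)$ and with the Neumann/Dirichlet trace operators appearing in \eqref{eq:extension3D}; (b) since $\Dparf$ is tangential to $\bff$ and $\operatorname{tr} U$ is supported in $\overline{\Omega}$, the function $\Dparf^{\eta} U$ still belongs to $\BLzero$ and satisfies the extension problem with data $\Dparf^{\eta} F$ and $\Dparf^{\eta} f$ for every $\eta \le \betapar$.

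Concretely, I would proceed as follows. First dispense with the trivial case $p=0$. Then, for $p \ge 1$, fix $\betapar$ with $|\betapar|=p$ and introduce the same sequences of radii and heights
\[
c_i = c + (i-1)\frac{1-c}{p}, \qquad \theta_i = \theta + (i-1)\frac{\theta'-\theta}{p}, \quad i=1,\ldots,p+1,
\]
so that each pair $(c_i R,\theta_i)\subset(c_{i+1}R,\theta_{i+1})$ differs by $O(R/p)$ and $O((\theta'-\theta)/p)$, respectively. Because $x_0\in\bff$ and the $c_i R$-balls are concentric with $B_R(x_0)$, each intersection $B_{c_i R}(x_0)\cap \Omega$ is again a half-ball, so case \ref{item:halfball} of Lemma~\ref{lem:CaccType3D} applies at every step.

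Next, write $\betapar = (\beta_1,\beta_2)$ and, without loss of generality, assume $\beta_1>0$. One application of Lemma~\ref{lem:CaccType3D} with $\bullet = \mathbf{f}_{1,\parallel}$ to the function $\Dparf^{\betapar - e_1}U$ on the pair $(B_{cR}^\theta, B_{c_2R}^{\theta_2})$ yields
\[
\|\Dparf^{\betapar}\nabla U\|^2_{L^2_\alpha(B_{cR}^\theta)}
\le \CacInt^2\bigl( C p^2 R^{-2}(1-c)^{-2}\|\Dparf^{\betapar-e_1}\nabla U\|^2_{L^2_\alpha(B_{c_2R}^{\theta_2})}
+ \|\Dparf^{\betapar} f\|^2_{L^2(B_R)} + \|\Dparf^{\betapar-e_1} F\|^2_{L^2_{-\alpha}(B_R^+)}\bigr).
\]
Iterating this bound $p$ times along the chain $B_{c_1R}^{\theta_1}\subset\cdots\subset B_{c_{p+1}R}^{\theta_{p+1}} = B_R^{\theta'}$ and collecting the geometric factors $(\CacInt p/(1-c))^{2(p-j)} R^{-2(p-j)}$ then reproduces \eqref{eq:CaccHighBound-f} once $\gamma$ is chosen as in the interior case, with the constant $C_{\rm loc}$ from the final step of the proof of Lemma~\ref{lem:CaccType3D} absorbed into $\gamma$.

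The only real obstacle — which is essentially bookkeeping rather than analysis — is verifying that differentiation with respect to the fixed tangential directions is admissible up to the boundary: specifically, that $\Dparf^{\eta} U \in \BLzero$ for each intermediate $\eta \le \betapar$. This follows from the flatness of $\bff$, which guarantees that tangential translations preserve both $\Omega^c$ near $\bff$ and the zero trace condition, so the difference-quotient argument used inside the proof of Lemma~\ref{lem:CaccType3D} is available at every iteration. With this in hand, the proof is a direct transcription of Corollary~\ref{cor:CaccHighInt}.
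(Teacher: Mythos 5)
Your proposal is correct and is precisely the argument the paper intends: the paper gives no separate proof of Corollary~\ref{cor:CaccHighBound-f}, stating only that ``the same arguments also apply to the other cases in the statement of Lemma~\ref{lem:CaccType3D},'' and you have correctly transcribed the proof of Corollary~\ref{cor:CaccHighInt} with case~\ref{item:halfball} of Lemma~\ref{lem:CaccType3D} in place of case~\ref{item:ball} and $\Dparf$ in place of $\partial_x$. Your observation that the flatness of $\bff$ makes the tangential difference-quotient argument and the membership $\Dparf^\eta U\in\BLzero$ available at each iteration step is the one genuinely non-cosmetic point, and you have identified and justified it correctly.
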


\begin{corollary}[High order boundary Caccioppoli inequality on $\bfe$]
  \label{cor:CaccHighBound-e}
\mbox{ } \newline
Let $\bfe \in \cE$ be an open edge of $\partial\Omega$ and $x_0 \in \bfe$. 
For $R>0$, let $B_R(x_0)\cap \Omega$ be an open wedge. 
For $\theta \in (0,\infty]$ and $c \in (0,1]$ denote by $B_{cR}^\theta := (B_{cR}(x_0) \cap \Omega)\times (0,\theta)$ 
the corresponding concentrically scaled and extended wedge. 
Let $U$ satisfy \eqref{eq:extension3D}
with given data $f$ and $F$ with $\supp(F)\subset \R^3\times [0, \boundY]$ and let $\theta'>\theta$.

Then, there is $\gamma>0$ such that for every $p \in \N_{0}$ 
  \begin{align}
    \label{eq:CaccHighBound-e}
    \|\Dpare^p\nabla U\|^2_{L^2_\alpha(B_{cR}^\theta)}
    & \leq (\gamma p)^{2p} R^{-2p} \|\nabla U\|^2_{L^2_\alpha(B_R^{\theta'})}  \\
    \nonumber
    & \qquad + \sum_{j=1}^p (\gamma p)^{2(p-j)} R^{2(j-p)}\left(\|\Dpare^j f\|^2_{L^2(B_R)}
    + \|\Dpare^{j-1} F\|^2_{L^2_{-\alpha}(B_{R}^+)}\right).
  \end{align}
\end{corollary}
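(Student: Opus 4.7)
The proposed proof is the verbatim analogue of the proof of Corollary~\ref{cor:CaccHighInt}, with the generic $x$-derivative $\dbeta$ replaced by the iterated tangential derivative $\Dpare^p$ and with Lemma~\ref{lem:CaccType3D} applied in its wedge case~\ref{item:wedge} (taking $\bullet = \epar$) at each step.

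The first step is to observe that, since $\epar$ is a constant unit vector parallel to $\bfe$, the operator $\Dpare$ commutes with $\div(y^\alpha \nabla\,\cdot\,)$ and with the Neumann operator $\partial_{n_\alpha}$, and the homogeneous trace condition on $\Omega^c$ is preserved because $\epar$ is tangential to $\partial\Omega$ along $\bfe$. Consequently, for every $0 \leq k \leq p-1$, the function $\Dpare^k U$ again solves the extension system \eqref{eq:extension3D} with source $\Dpare^k F$ and Neumann data $\Dpare^k f$, so Lemma~\ref{lem:CaccType3D}\ref{item:wedge} is applicable to each of them on any concentric wedge inside $B_R(x_0) \cap \Omega$.

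The second step is to set up intermediate parameters $c_i = c + (i-1)(1-c)/p$ and $\theta_i = \theta + (i-1)(\theta'-\theta)/p$ for $i = 1,\dots,p+1$, so that $B_{c_1 R}^{\theta_1} = B_{cR}^{\theta}$, $B_{c_{p+1} R}^{\theta_{p+1}} = B_R^{\theta'}$, and uniformly in $i$ the bound $((c_{i+1}-c_i)R)^{-2} + (\theta_{i+1}-\theta_i)^{-2} \leq C p^2 (1-c)^{-2} R^{-2}$ holds. Crucially, each intermediate set $B_{c_i R}(x_0) \cap \Omega$ remains a wedge (the two bounding half-spaces are unchanged upon shrinking the concentric ball), so Lemma~\ref{lem:CaccType3D} with $\bullet = \epar$ can be applied to $\Dpare^{p-i} U$ on the nested pair $B_{c_i R}^{\theta_i} \subset B_{c_{i+1} R}^{\theta_{i+1}}$. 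Iterating $p$ times and collecting the geometric factor $(\CacInt\, p/(1-c))^2 R^{-2}$ across the chain reproduces exactly \eqref{eq:CaccHighBound-e} with $\gamma = \max(C_{\mathrm{loc}}^2, 1)\, \CacInt / (1-c)$; the case $p=0$ is trivial by the empty-sum convention.

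The only point which genuinely requires verification is that the difference-quotient argument inside Lemma~\ref{lem:CaccType3D} is compatible with iteration in the $\epar$-direction on the wedge. This is indeed the case because $\epar$ is parallel to both half-planes that bound $B_R(x_0) \cap \Omega$, so for $|\tau|$ small the translation $x \mapsto x + \tau \epar$ maps the support of any cutoff $\zeta_x \in C_c^\infty(B_R)$ into the admissible set, and the test function $D_\epar^{-\tau}(\zeta_x^2 D_\epar^\tau u)$ stays in $\widetilde{H}^s(\Omega)$. Beyond this geometric remark, no new analytic difficulty appears: the Caccioppoli step, the absorption of $\tfrac{1}{2}\|\zeta D_\epar^\tau(\nabla U)\|_{L^2_\alpha(B_R^+)}^2$, and the bookkeeping of the $p$-dependent prefactors are identical to those already carried out in the proof of Corollary~\ref{cor:CaccHighInt}.
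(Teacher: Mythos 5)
Your proof is correct and takes precisely the approach the paper has in mind: the paper states Corollary~\ref{cor:CaccHighBound-e} with only the remark that ``the same arguments also apply'' as in Corollary~\ref{cor:CaccHighInt}, and your proposal fills in exactly those details — iterating Lemma~\ref{lem:CaccType3D} case~\ref{item:wedge} with $\bullet = \epar$ over the nested wedges $B_{c_iR}^{\theta_i}$, using the same intermediate parameters and the same constant $\gamma = \max(C_{\rm loc}^2,1)\CacInt/(1-c)$. Your extra observation that $\epar$ is parallel to both bounding half-spaces of the wedge (so that the difference-quotient test function $D_{\epar}^{-\tau}(\zeta_x^2 D_{\epar}^\tau u)$ retains zero trace on $\Omega^c$ near $B_R$, and each iterate $\Dpare^k U$ remains admissible for the localized Caccioppoli step) is a legitimate point that the proof of Lemma~\ref{lem:CaccType3D} slightly glosses over by invoking ``$B_R\subset\Omega$'', which is accurate only in the interior case; making it explicit is a sound addition rather than a deviation.
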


\section{Local tangential regularity for the CS extension} 
\label{sec:LocTgReg}

Employing additional regularity of $U$, which was shown in 
Lemma~\ref{lem:regularity3D}, the term $\norm{\nabla U}_{L^2_{\alpha}(B_R^+)}$
in \eqref{eq:CaccHighBound-int} -- \eqref{eq:CaccHighBound-e} is small for $R\rightarrow 0$. 
This is the made precise in the following lemma, 
which is the exact analog of the corresponding statement in dimension $d=2$ 
near edges \cite[Lem. 4.3]{FMMS21}.
\begin{lemma}\label{lem:estH13D}
For $t \in [0,1/2)$, there exists $\Creg > 0$ 
(depending only on $t$ and $\Omega$) 
such that the solution $U$ of \eqref{eq:minimization} satisfies 
\begin{align}
\label{eq:lem:estH13D-5}
\|r^{-t}_{\partial\Omega} \nabla U \|^2_{L^2_{\alpha}(\Omega^+)} 
\leq 
C_{\rm reg} C_t N^2(U,F,f)
\end{align}
with the constant $C_t>0$ from Lemma~\ref{lem:regularity3D} and $N^2(U,F,f)$ given by \eqref{eq:CUFf}. 
\end{lemma}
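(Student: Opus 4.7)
The strategy is to combine the shift theorem of Lemma~\ref{lem:regularity3D} with a classical Hardy-type embedding applied slice by slice in $y$. First, by Fubini,
\begin{equation*}
\|r_{\partial\Omega}^{-t}\nabla U\|_{L^2_\alpha(\Omega^+)}^2 = \int_0^\infty y^\alpha \int_\Omega \frac{|\nabla U(x,y)|^2}{r_{\partial\Omega}(x)^{2t}}\,dx\,dy,
\end{equation*}
so it suffices to control the inner integral pointwise in $y$ by $\|\nabla U(\cdot,y)\|_{H^t(B_{\widetilde R})}^2$ and then integrate against $y^\alpha\,dy$.

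For the slicewise control I would invoke the Hardy inequality on bounded Lipschitz domains, valid precisely for $t\in[0,1/2)$ and without any boundary condition on the argument (see, e.g., \cite[Thm.~1.4.4.3 / Cor.~1.4.4.10]{Grisvard}):
\begin{equation*}
\|v\, r_{\partial\Omega}^{-t}\|_{L^2(\Omega)} \leq C(t,\Omega)\,\|v\|_{H^t(\Omega)} \qquad \forall v \in H^t(\Omega).
\end{equation*}
Applied componentwise to $v=\nabla U(\cdot,y)$ and combined with $\Omega\subset B_{\widetilde R}$, this gives
\begin{equation*}
\int_\Omega \frac{|\nabla U(x,y)|^2}{r_{\partial\Omega}(x)^{2t}}\,dx \;\lesssim\; \|\nabla U(\cdot,y)\|_{H^t(\Omega)}^2 \;\leq\; \|\nabla U(\cdot,y)\|_{H^t(B_{\widetilde R})}^2 .
\end{equation*}
Multiplying by $y^\alpha$, integrating in $y\in(0,\infty)$, and plugging in the shift estimate of Lemma~\ref{lem:regularity3D} yields the claim, with $\Creg$ absorbing the Hardy constant $C(t,\Omega)$.

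The only delicate point is the availability of the Hardy inequality without a vanishing trace on $\partial\Omega$, since $\nabla U(\cdot,y)$ has no such zero boundary trace. This is exactly why the restriction $t<1/2$ is imposed: for such $t$, $\widetilde H^t(\Omega)$ and $H^t(\Omega)$ coincide as sets with equivalent norms, and the embedding of $H^t(\Omega)$ into $L^2(\Omega; r_{\partial\Omega}^{-2t}\,dx)$ holds unconditionally. Apart from this, the argument is essentially a one-line reduction to Lemma~\ref{lem:regularity3D} and should mirror its two-dimensional counterpart \cite[Lem.~4.3]{FMMS21} verbatim, merely with $d=3$ in the slice variable.
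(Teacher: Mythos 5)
Your proof is correct and follows the same route the paper uses: it omits the proof of this lemma, noting only that it is "the exact analog" of \cite[Lem.~4.3]{FMMS21}, and indeed in Step~6 of the proof of Lemma~\ref{lem:localhighregularity3D-ef} the authors invoke precisely the Grisvard weighted embedding \cite[Thm.~1.4.4.3]{Grisvard} slicewise in $y$ (as you do) before plugging in the shift estimate. The only missing verbal point in your writeup -- that the last inequality $\|v\|_{H^t(\Omega)}\leq\|v\|_{H^t(B_{\widetilde R})}$ holds because restricting the double integral defining the Aronstein--Slobodeckij seminorm to the smaller set $\Omega\subset B_{\widetilde R}$ only decreases it -- is immediate, so there is no gap.
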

Lemma~\ref{lem:regularity3D} provides global regularity
for the solution $U$ of \eqref{eq:extension3D}. 
For all $R, \boundY>0$ and $x_0\in \R^3$, 
let $B_R^\boundY(x_0) \coloneqq B_R(x_0)\times (0, \boundY)$.
We introduce, 
for any set $B_R^\boundY\subset \R^3\times \Rpos$ and any $p\in \N_0$,
\begin{equation}
  \label{eq:Ntilde}
  \tNp_{B_R^\boundY}(F,f) \coloneqq 
  \sum_{j=1}^{p+1} (\gamma p)^{-2j}\bigg(3^j\max_{\betam =j}\|\dbeta f\|^2_{L^2(B_R)}
  + 3^{j-1}\max_{\betam=j-1}\|\dbeta F\|^2_{L^2_{-\alpha}(B_R^\boundY)}\bigg).
\end{equation}
We derive localized versions of Lemma~\ref{lem:regularity3D} 
for tangential derivatives of $U$ at the boundary. 
Their proofs are minor variations of arguments in the proof of \cite[Lemma 4.4]{FMMS21};
we present the details here for completeness. 

\begin{lemma}[High order localized shift theorem near a face or an edge]
\label{lem:localhighregularity3D-ef}
Let $U$ be the solution of \eqref{eq:minimization}.  Let $\bfs\in \cE\cup\cF$.
Let $x_0 \in \bfs$. 
Let $R\in (0,1/2]$, $c\in(0,1)$, and assume that 
$B_R(x_0) \cap \Omega$ is a half ball (if $\bfs\in \cF$) or a wedge (if $\bfs\in \cE$). 

Then,
for $t \in [0,1/2)$, there is $C > 0$ independent of $R$ and $x_0$
 such that, for all $\beta\in \N$ (if $\bfs\in \cE$) or $\beta \in \N_0^2$ (if
 $\bfs \in \cF$), with $\betam \eqqcolon p\in \N_0$, 
\begin{equation}
\label{eq:lem:localhighregularity3D-5}
  \| r_{\partial \Omega}^{-t}\Dpars^\beta \nabla U \|^2_{L^2_\alpha(B^{\boundY/2}_{cR})}
   \leq C  R^{-2p-1}(\gamma p)^{2p}(1+\gamma p) \left(\| \nabla U\|_{L^2_\alpha(B_R^\boundY)}^{2} 
        +  R^{s+1}\tNp_{B_R^\boundY}(F, f)  \right),
\end{equation}
where $\gamma$ is the constant in Corollary~\ref{cor:CaccHighBound-e} or \ref{cor:CaccHighBound-f}. 
\end{lemma}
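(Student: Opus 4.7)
The plan is to combine the high-order tangential Caccioppoli inequality (Corollary~\ref{cor:CaccHighBound-f} or~\ref{cor:CaccHighBound-e}) with a localized and rescaled version of the shift theorems of Lemmas~\ref{lem:regularity3D}--\ref{lem:estH13D} applied to a tangential derivative of $U$ of order $p-1$.

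First, I would observe that since $\bfs$ is tangential to the face (or edge) abutting $B_R(x_0)$, and $B_R(x_0)\cap\Omega$ is a half-ball or a wedge, tangential differentiation $\Dpars$ preserves the homogeneous Dirichlet boundary condition on the flat boundary piece inside $B_R(x_0)$; consequently, for any multi-index $\eta\le\beta$, the function $\Dpars^\eta U$ formally satisfies the extension equation on $B_R(x_0)\cap \Omega$ with source data $\Dpars^\eta F$ and Neumann data $\Dpars^\eta f$. In order to bring this local fact into the global framework of Lemma~\ref{lem:estH13D}, I would introduce a smooth cutoff $\chi$ supported in $B_{c'R}\times(-\boundY,\boundY)$, with $\chi\equiv1$ on $B_{cR}\times(-\boundY/2,\boundY/2)$ for some $c<c'<1$. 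Writing $\eta = \beta - \mathbf{e}_i$ for some index $i$ with $\beta_i\ge 1$, the product $\widehat W := \chi\,\Dpars^\eta U$ lies in $\BLzero$ and satisfies an extension-type problem with right-hand side $\chi\,\Dpars^\eta F$ plus commutator contributions bounded by $\|\nabla\chi\|_{L^\infty}$ times lower-order tangential derivatives of $U$.

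Next, I would apply Lemma~\ref{lem:estH13D} to $\widehat W$ and then differentiate once more in the tangential direction corresponding to $\mathbf{e}_i$. On $B_{cR}^{\boundY/2}$, where $\chi\equiv 1$, this produces $\Dpars^\beta\nabla U$, which matches the left-hand side of the claim. A scaling argument from $B_R$ to a ball of unit size tracks how the constants in Lemmas~\ref{lem:regularity3D}--\ref{lem:estH13D} depend on $R$: the weight $r_{\partial\Omega}^{-t}$ and the $H^{1-s}(\Omega)$ norm both rescale polynomially in $R$, yielding the overall prefactor $R^{-2p-1}$ on the left-hand side and the factor $R^{s+1}$ in front of the data norms in the target inequality. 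The energy norm $\|\nabla\widehat W\|^2_{L^2_\alpha(B_{c'R}^\boundY)}$ that then appears on the right-hand side is controlled via Corollary~\ref{cor:CaccHighBound-f} or~\ref{cor:CaccHighBound-e} applied with the multi-index $\eta$ of length $p-1$ on a slightly enlarged ball; this produces the factor $(\gamma(p-1))^{2(p-1)}R^{-2(p-1)}\|\nabla U\|^2_{L^2_\alpha(B_R^\boundY)}$, together with data contributions that reorganize precisely into the sum defining $\tNp_{B_R^\boundY}(F,f)$ in \eqref{eq:Ntilde}. Combining these estimates and bookkeeping the powers of $R$ and $\gamma p$ yields the stated bound.

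The main technical obstacle is the book-keeping of constants: the scaling from $B_R$ to a unit ball, the commutator terms produced by the cutoff $\chi$, and the iteration over the tangential degree $p$ must all conspire to give the precise factors $R^{-2p-1}$, $(\gamma p)^{2p}(1+\gamma p)$, and $R^{s+1}\tNp_{B_R^\boundY}(F,f)$---in particular, without spurious powers of $p$ arising from the commutator. The geometric hypothesis that $B_R(x_0)\cap\Omega$ is a half-ball or a wedge is essential: it guarantees that the only boundary pieces meeting $B_R(x_0)$ are the face $\bff$ (in case $\bfs\in\cF$) or the two faces sharing the edge $\bfe$ (in case $\bfs\in\cE$), so that tangential differentiation along $\bfs$ truly commutes with the homogeneous Dirichlet condition without generating corner contributions from other faces or vertices.
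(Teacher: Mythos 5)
Your proposal correctly identifies the ingredients (cutoff localization, the high-order Caccioppoli corollaries, the base shift theorem, the geometric role of the half-ball/wedge assumption), but the central step has the order of operations reversed in a way that cannot be repaired. You propose to form $\widehat W = \chi\,\Dpars^\eta U$ with $|\eta| = p-1$, apply the shift theorem (Lemma~\ref{lem:estH13D} / Lemma~\ref{lem:regularity3D}) to $\widehat W$, and ``then differentiate once more in the tangential direction.'' This does not work: the shift theorem delivers a fractional gain of order $t < 1/2$ in the tangential Sobolev scale for $\nabla \widehat W$, not a full extra tangential derivative, so there is no way to upgrade the resulting $L^2_\alpha$ bound for $r_{\partial\Omega}^{-t}\nabla\widehat W$ (which carries $p-1$ tangential derivatives of $U$) into a bound for $r_{\partial\Omega}^{-t}\Dpars^\beta\nabla U$ with $|\beta| = p$. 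One cannot differentiate the conclusion of an a priori estimate.

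The paper's proof takes the opposite order: one first forms the cutoff of the \emph{full} $p$-th order tangential derivative, $\tUbeta = \eta\,\Dpars^\beta U$ with $|\beta| = p$, checks that it solves an extension-type problem with a modified source $\tFbeta$ (containing commutator terms with derivatives of $\eta$ falling on $\Dpars^\beta U$ and $\Dpars^{\tbeta}U$, $|\tbeta|=p-1$) and boundary data $\tfbeta$, and then applies Lemma~\ref{lem:regularity3D} once to $\tUbeta$. The weighted estimate $\|r_{\partial\Omega}^{-t}\nabla\tUbeta\|^2_{L^2_\alpha} \lesssim N^2(\tUbeta,\tFbeta,\tfbeta)$ then follows from Grisvard's inequality, and $N^2(\tUbeta,\tFbeta,\tfbeta)$ is bounded by invoking Corollary~\ref{cor:CaccHighBound-e} or~\ref{cor:CaccHighBound-f} at tangential orders $p$ and $p-1$ (to handle, respectively, the leading and commutator terms), which produces precisely the factors $R^{-2p-1}(\gamma p)^{2p}(1+\gamma p)$ and $R^{s+1}\tNp_{B_R^\boundY}(F,f)$. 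Your intuitions about the cutoff commutators, the factorial-free bookkeeping, and the role of the half-ball/wedge geometry are all on target, but they need to be deployed around this corrected ordering: differentiate to order $p$, localize, apply the shift theorem once, then estimate the resulting $N^2$ via the Caccioppoli corollaries.
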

\begin{proof}
  Let $\tc = (c+1)/2 \in (c, 1)$.
  Let $\eta_x \in C^\infty_0(B_{\tc R}(x_0))$ with $\eta_x\equiv 1 $ on
  $B_{cR}(x_0)$, $\eta_y \in C^\infty_0(-\boundY,\boundY)$ with $\eta_y \equiv 1$ on $(-\boundY/2,\boundY/2)$ and 
  $\|\nabla^j \eta_x\|_{L^\infty(B_R(x_0))} \leq C_\eta R^{-j}$, $j \in \{0,1,2\}$ as well as 
  $\|\partial_y^j \eta_y\|_{L^\infty(-\boundY,\boundY)} \leq C_\eta \boundY^{-j}$, $j \in \{0,1,2\}$, 
  with a constant $C_\eta > 0$ independent of $R$ and $\boundY$.
  Let $\eta(x,y):= \eta_x(x) \eta_y(y)$. 

We denote $\kappa=1$ if $\bfs\in\cE$  and $\kappa=2$ if $\bfs\in \cF$ (so that
$\beta\in \N^\kappa_0$). 
We abbreviate $\Uparbeta\coloneqq  \Dpars^\beta U$, $\tUbeta\coloneqq  \eta \Dpars^\beta U$, 
$\Fparbeta = \Dpars^\beta F$, and $\fparbeta  = \Dpars^\beta f$. 
Throughout the proof we will use the fact that, 
for all $j\in \N$ and all sufficiently smooth functions $v$, 
we have 
\begin{equation*}
  \max_{\etam=j}|\Dpars^\eta v | \leq 3^{j/2}\max_{\betam=j} |\dbeta v|.
\end{equation*}
We also note that the assumptions on $\eta(x,y) = \eta_x(x) \eta_y(y)$ imply the
existence of $\tilde C_\eta > 0$ 
(which absorbs the dependence on $\boundY$ and $c$ that we do not further track) 
such that 
\begin{equation}
\|\nabla^j_{x} \partial_y^{j'} \eta\|_{L^\infty(\R^3 \times \R)} \leq \tilde C_\eta R^{-j}, \qquad j \in \{0,1,2\}, j' \in \{0,1,2\}. 
\end{equation}
\paragraph{\bf Step 1} (Localization of the equation). 
Using that $U$ solves the extension problem \eqref{eq:extension3D}, we obtain that the function 
$\tUbeta = \eta \Uparbeta$ satisfies in $\Omega\times (0,\infty)$ 
the equation
\begin{multline*}
   \tFbeta : = \div (y^\alpha \nabla \tUbeta)
  \\
  \begin{aligned}[t]
  &=   y^\alpha \operatorname{div}_x (\nabla_x \tUbeta) + \partial_y(y^\alpha \partial_y\tUbeta) 
  \\ &
    \begin{multlined}[][\arraycolsep]
      =  y^\alpha\left((\Delta_x \eta) \Uparbeta + 2 \nabla_x \eta \cdot \nabla_x
        \Uparbeta +\eta \Delta_x \Uparbeta\right) + \eta\partial_y(y^\alpha
      \partial_y\Uparbeta)
      \\
      + \partial_y (y^\alpha \Uparbeta \partial_y \eta ) + y^\alpha \partial_y \Uparbeta \partial_y \eta 
    \end{multlined}
  \\
  &
    = y^\alpha\left((\Delta_x \eta) \Uparbeta + 2 \nabla_x \eta \cdot \nabla_x \Uparbeta\right) 
+ \partial_y (y^\alpha \Uparbeta \partial_y \eta ) + y^\alpha \partial_y \Uparbeta \partial_y \eta 
+ \eta   \operatorname*{div} (y^\alpha \nabla \Uparbeta) 
    \\
  & = y^\alpha\left((\Delta_x \eta) \Uparbeta + 2 \nabla_x \eta \cdot \nabla_x \Uparbeta\right)
+ \partial_y (y^\alpha \Uparbeta \partial_y \eta ) + y^\alpha \partial_y \Uparbeta \partial_y \eta
+ \eta \Fparbeta 
  \end{aligned}
\end{multline*}
as well as the boundary conditions 
\begin{align*}
\partial_{n_\alpha} \tUbeta(\cdot,0) & = \eta(\cdot,0) \Dpars^{\beta} f  \eqqcolon  \tfbeta   &&\text{ on $\Omega$}, \\
\operatorname{tr}\tUbeta & = 0  &&\text{ on $\Omega^c$.}
\end{align*}
By the support properties of the cut-off function $\eta$, we have $\supp \tFbeta
\subset \overline{B_{\tc R}}(x_0) \times [0,\boundY] $.
Using Lemma~\ref{lem:regularity3D}, 
for all $t \in [0,1/2)$, there is a $C_t>0$ such that
\begin{equation}
  \label{eq:reg2D_in_highord}
  \int_{\Rpos}y^\alpha \|\nabla\tUbeta(\cdot, y)\|^2_{H^t(B_{\widetilde{R}})}dy \leq C_t N^2(\tUbeta, \tFbeta, \tfbeta),
\end{equation}
where $B_{\widetilde{R}}$ is a ball containing $\overline{\Omega}$.
By \eqref{eq:CUFf}, we must bound $N^2(\tUbeta, \tFbeta, \tfbeta)$, i.e., 
the quantities
$\|\nabla \tUbeta\|_{L^2_{\alpha}(\R^3\times \Rpos)}$, $\|\tFbeta\|_{L^2_{-\alpha}(\R^3 \times (0,\boundY))}$, 
and 
$\|\tfbeta\|_{H^{1-s}(\Omega)}$. 
In the following, $\gamma$ is the constant introduced in
Corollary~\ref{cor:CaccHighBound-e} or \ref{cor:CaccHighBound-f}.

\paragraph{\bf Step 2} (Estimate of $\|\nabla \tUbeta\|_{L^2_{\alpha}(\R^3\times \Rpos)}$).
Let $\tbeta\in \N^\kappa_0$ be any (multi-)index such that $\tbetam = p-1$.
We write
\begin{align}\label{eq:lem:localregularity2D-00}
\nonumber 
\|\nabla \tUbeta\|^2_{L^2_{\alpha}(\R^3\times \Rpos)} &\leq 
2\|\nabla \eta\|^2_{L^\infty(B_R^\boundY)}  \|\nabla_x \Upar^{(\tbeta)}\|^2_{L^2_{\alpha}(B_{\tc R}^\boundY)} + 2\|\eta\|^2_{L^\infty(B_{\tc R}^\boundY)} \|\nabla \Uparbeta\|^2_{L^2_{\alpha}(B^\boundY_{{\tc R}})} 
  \\ 
&\leq 2\tilde C_{\eta}^2 \left( R^{-2}
\|\nabla\Upar^{(\tbeta)}\|^2_{L^2_{\alpha}(B^\boundY_{\tc R})} + \|\nabla \Uparbeta\|^2_{L^2_{\alpha}(B^\boundY_{\tc R})}\right). 
\end{align}
We employ Corollary~\ref{cor:CaccHighBound-e} or \ref{cor:CaccHighBound-f} (with
$\tc$ instead of $c$)
to obtain for all $\beta \in \N^\kappa_0$
\begin{equation}\label{eq:lem:localregularity2D-10}
  \begin{aligned}
  \|\nabla \Uparbeta\|^2_{L^2_{\alpha}(B^\boundY_{\tc R})}
  &
    \begin{multlined}[t][\arraycolsep]
      \leq R^{-2p}(\gamma p)^{2p}\bigg(\|\nabla U\|^2_{L^2_{\alpha}(B^\boundY_{R})}
      \\ + \sum_{j=1}^p R^{2j} (\gamma p)^{-2j} \Big(\max_{\substack{\etam=j\\\eta\leq \beta}}\|\Dpars^\eta f\|^2_{L^2(B_{R})} + \max_{\substack{\etam=j-1\\\eta\leq \beta}}\|\Dpars^{\eta} F\|^2_{L^2_{-\alpha}(B_{R}^\boundY)}\Big)\bigg) 
    \end{multlined}
    \\ &
         \begin{multlined}[t][\arraycolsep]
         \leq R^{-2p}(\gamma p)^{2p} \bigg( \|\nabla U\|^2_{L^2_{\alpha}(B^\boundY_{R})} \\
   + R^{2}\sum_{j=1}^p R^{2(j-1)} (\gamma p)^{-2j}\Big(3^j\max_{\betam=j} \|\dbeta f\|^2_{L^2(B_{R})}
    + 3^{j-1}\max_{\betam=j-1} \|\dbeta F\|^2_{L^2_{-\alpha}(B_{R}^\boundY)} \Big)\bigg) 
  \end{multlined}
\\ &\leq R^{-2p} (\gamma p)^{2p}\left(\|\nabla U\|^2_{L^2_{\alpha}(B^\boundY_{R})} + R^2 \tNp_{B^\boundY_R}(F, f) \right).
  \end{aligned}
\end{equation} 
%
For $p \in \N$, we 
apply \eqref{eq:lem:localregularity2D-10} to the $\tbeta$-derivative 
and 
exploit the estimate $(\gamma(p-1))^{-2} \leq \max\{1,\gamma^{-2}\}$ for $p>1$ to  bound
$(\gamma (p-1))^{2p-2} \tNpmone_{B_R^\boundY}(F,f) \lesssim  \max\{1,\gamma^{-2}\}(\gamma p)^{2p} \tNp_{B_R^\boundY}(F,f)$.
Consequently, 
we obtain the existence of a constant $C>0$ such that for all $p\in \N$ it holds that
(recall $|\tbeta| = p-1$) 
\begin{align}\label{eq:lem:localregularity2D-20}
  \|\nabla \Upar^{(\tbeta)}\|^2_{L^2_{\alpha}(B^\boundY_{\tc R})}
  &\leq C
    \max\{1,\gamma^{-2}\} 
    R^{-2p+2} (\gamma p)^{2p}\left(\|\nabla U\|^2_{L^2_{\alpha}(B^\boundY_{R})} + R^2 \tNp_{B^\boundY_R}(F, f) \right)
    \;.
\end{align}
Inserting \eqref{eq:lem:localregularity2D-10} and \eqref{eq:lem:localregularity2D-20} 
into \eqref{eq:lem:localregularity2D-00} 
provides the estimate
\begin{align*}
  \|\nabla \tUbeta\|^2_{L^2_{\alpha}(\R^3\times \Rpos)} \leq C
  R^{-2p} (\gamma p)^{2p}\left(\|\nabla U\|^2_{L^2_{\alpha}(B^\boundY_{R})} + R^2 \tNp_{B^\boundY_R}(F, f) \right)
\end{align*}
with a constant $C>0$ depending only on the constants $\tilde C_{\eta}$, $c$, and $\gamma$.

\paragraph{\bf Step 3} (Estimate of $\|\tFbeta\|_{L^2_{-\alpha}(\R^3 \times \Rpos)}$).
We treat the five terms appearing in $\|\tFbeta\|_{L^2_{-\alpha}(\R^3 \times \Rpos)}$ separately. 
With \eqref{eq:lem:localregularity2D-10}, we obtain
\begin{align*}
\norm{ y^\alpha\nabla_x \eta \cdot \nabla_x \Uparbeta}^2_{L^2_{-\alpha}(\R^3 \times (0,\boundY))} &= 
\norm{\nabla_x \eta \cdot \nabla_x \Uparbeta}^2_{L^2_{\alpha}(\R^3 \times \Rpos)} \leq 
C_\eta^2 \frac{1}{R^2} \norm{\nabla_x \Uparbeta}^2_{L^2_{\alpha}(B_{\tc R}^\boundY)} 
\\ 
\overset{ \eqref{eq:lem:localregularity2D-10}}&{\leq}
C R^{-2p-2} (\gamma p)^{2p}\left(\|\nabla U\|^2_{L^2_{\alpha}(B^\boundY_{R})} + R^2 \tNp_{B^\boundY_R}(F, f) \right)
\;.
\end{align*}
Similarly, we get (with $\tbetam = p-1$ again)
\begin{align*}
\norm{y^\alpha(\Delta_x \eta) \Uparbeta}^2_{L^2_{-\alpha}(\R^3 \times (0,\boundY))}
&=  \norm{(\Delta_x \eta) \Uparbeta}^2_{L^2_{\alpha}(B^\boundY_{\tc R})} \leq
  C_\eta^2 \frac{1}{R^4} \norm{\nabla \Upar^{(\tbeta)}}^2_{L^2_{\alpha}(B_{\tc R}^\boundY)}
\\ 
  \overset{\text{\eqref{eq:lem:localregularity2D-20}}}&{\leq} 
  C R^{-2p-2} (\gamma p)^{2p}\left(\|\nabla U\|^2_{L^2_{\alpha}(B^\boundY_{R})} + R^2 \tNp_{B^\boundY_R}(F, f) \right)
\;.
\end{align*}
Next, we estimate 
\begin{align*}
  \|\eta \Fparbeta\|^2_{L^2_{-\alpha} (\R^3 \times (0,\boundY))}
  \leq \|\Fparbeta\|^2_{L^2_{-\alpha} (B_{\tc R}^\boundY)} \leq 3^p \max_{\betam=p} \|\dbeta F\|_{L^2_{-\alpha} (B_{\tc R}^\boundY)}^2 \leq (\gamma p)^{2p+2} \tNp_{B_R^\boundY}(F, f). 
\end{align*}
Finally, for the term 
$\partial_y (y^\alpha \Uparbeta \partial_y \eta ) + y^\alpha \partial_y \Uparbeta \partial_y \eta$, we observe that $\partial_y \eta$ vanishes near $y = 0$ so that the weight
$y^\alpha$ does not come into play as it can be bounded from above and below by positive constants depending only on $\boundY$. We arrive at 
\begin{multline*}
 \norm{\partial_y (y^\alpha \Uparbeta \partial_y \eta ) + y^\alpha \partial_y \Uparbeta \partial_y \eta}_{L^2_{-\alpha}(\R^3 \times (0,\boundY))}^{2} 
  \\ 
       \begin{aligned}[t]
       & \leq 
C \left( \boundY^{-2} \|\Uparbeta\|_{L^2_\alpha(B_{\tc R} \times (0,\boundY))}^{2} 
  + \boundY^{-1}\|\nabla \Uparbeta\|_{L^2_\alpha(B^\boundY_{\tc R})}^{2} \right) 
\\
  \overset{\eqref{eq:lem:localregularity2D-10},\eqref{eq:lem:localregularity2D-20}}&{\leq } C_\boundY
R^{-2p-2} (\gamma p)^{2p}\left(\|\nabla U\|^2_{L^2_{\alpha}(B^\boundY_{R})} + R^2 \tNp_{B^\boundY_R}(F, f) \right)
       \end{aligned}
\end{multline*}
for suitable $C_\boundY > 0$ depending on $\boundY$. 

\paragraph{\bf Step 4} (Estimate of $\|\tfbeta\|_{H^{1-s}(\Omega)}$.)
Here, we use
Lemma~\ref{lemma:localization-fractional-norms} and $R<1/2$ together with $s<1$ to obtain 
\begin{align*}
\|\tfbeta\|_{H^{1-s}(\Omega)}^2 & 
\leq 2\Cloctwo^2 C_\eta^2 \left( 9R^{2s-2} \|\Dpars^\beta f\|^2_{L^2(B_R)} + |\Dpars^\beta f|^2_{H^{1-s}(B_R)} \right) 
  \\ &
  \leq C\Cloctwo^2C_\eta^2 R^{2s-2}\left(3^p \max_{\betam=p}\| \dbeta f\|^2_{L^2(B_R)} + 3^{p+1} \max_{\betam=p+1}\|\dbeta f\|^2_{L^2(B_R)}  \right)
  \\ &
  \leq C \Cloctwo^2C_\eta^2 R^{2s-2}(\gamma p)^{2p}(1+(\gamma p)^2) \tNp_{B_R^\boundY}(F, f) 
\end{align*}
with a constant $C>0$ depending only on $\Omega$, $s$, and $c$.


\paragraph{\bf Step 5} (Putting everything together.)
Combining the above estimates, we obtain that there exists 
a constant $C>0$ depending only on
$\tilde C_\eta$,
$\Cloctwo$, $\boundY$, $\gamma$, $\Omega$, $s$, and $c$ such that
\begin{align*}
  & N^2(\tUbeta,\tFbeta,\tfbeta)
  \\ & \quad = 
\|\nabla \tUbeta\|^2_{L^2_\alpha(\R^3 \times \Rpos)} + 
\|\nabla \tUbeta\|_{L^2_\alpha(\R^3 \times \Rpos)} \|\tFbeta\|_{L^2_{-\alpha}(\R^3\times (0,\boundY))} + 
       \|\nabla \tUbeta\|_{L^2_\alpha(\R^3 \times \Rpos)} \|\tfbeta\|_{H^{1-s}(\Omega)}   
  \\ & \quad \leq
        C \left( 1 + \gamma p R^{-1} + R^{-1}(1+\gamma p) \right)R^{-2p}(\gamma p)^{2p} \left( \|\nabla U\|_{L^2_\alpha(B_R^\boundY)}^{2} 
                 + R^{s+1}\tNp_{B_R^\boundY}(F, f) \right).
\end{align*}
Inserting this estimate in \eqref{eq:reg2D_in_highord} 
we conclude that
\begin{equation*}
  \int_{\Rpos} y^\alpha \norm{\nabla \tUbeta(\cdot,y)}_{H^t(\Omega)}^2 dy \leq C
  \left( 1 + \gamma p  \right)R^{-2p-1}(\gamma p)^{2p} \left( \|\nabla U\|_{L^2_\alpha(B_R^\boundY)}^{2} 
         + R^{s+1}\tNp_{B_R^\boundY}(F, f) \right).
\end{equation*}

\paragraph{\bf Step 6} The estimate \eqref{eq:lem:localhighregularity3D-5} 
follows from \cite[Thm.~{1.4.4.3}]{Grisvard},
which gives
\begin{equation*}
  \int_{\Rpos} y^\alpha \|r^{-t}_{\partial\Omega} \nabla \widetilde U^{(p)}(\cdot,y)\|^2_{L^2(\Omega)}\,dy 
\leq C 
\int_{\Rpos} y^\alpha \|\nabla \widetilde U^{(p)}(\cdot,y)\|^2_{H^t(\Omega)}\,dy
\end{equation*}
and from $\tUbeta = \Dpars^p U$ on $B_{cR}\times (0, \boundY/2)$ by the definition of $\eta$.
\end{proof}

The following lemma is the same of the above, but in the interior of the domain.
\begin{lemma}[High order localized shift theorem in the interior]
\label{lem:localhighregularity3D-int}
Let $U$ be the solution of \eqref{eq:minimization}.  
Let $x_0 \in \Omega$. 
Let $R\in (0,1/2]$, $c\in(0,1)$, and assume that 
$B_R(x_0) \subset \Omega$. 

Then,
for $t \in [0,1/2)$, there is $C > 0$ independent of $R$ and $x_0$
 such that, for all $\beta\in \N_0^3$, with $p = \betam \in \N_0$, 
%
\begin{equation}
\label{eq:lem:localhighregularity3D-int}
  \| r_{\partial \Omega}^{-t}\dbeta \nabla U \|^2_{L^2_\alpha(B^{\boundY/2}_{cR})}
   \leq C  R^{-2p-1}(\gamma p)^{2p}(1+\gamma p) \left(\| \nabla U\|_{L^2_\alpha(B_R^\boundY)}^{2} 
            +  R^{s+1}\tNp_{B_R^\boundY}(F, f)  \right).
\end{equation}
%
\end{lemma}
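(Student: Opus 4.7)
The plan is to mimic, line by line, the proof of Lemma~\ref{lem:localhighregularity3D-ef}, with three simplifications imposed by the interior geometry: Cartesian partial derivatives $\dbeta$ replace tangential derivatives $\Dpars^\beta$; the interior Caccioppoli bound in Corollary~\ref{cor:CaccHighInt} replaces the boundary Caccioppoli bounds of Corollaries~\ref{cor:CaccHighBound-f}--\ref{cor:CaccHighBound-e}; and $B_R(x_0)$ is a full ball, so the $3^{j/2}$ conversion factor between tangential and Cartesian derivatives used in the boundary case is not needed. First I would choose a product cutoff $\eta(x,y)=\eta_x(x)\eta_y(y)$ with $\eta_x\in C_0^\infty(B_{\tilde c R}(x_0))$ for $\tilde c=(c+1)/2$, $\eta_y\in C_0^\infty(-\boundY,\boundY)$, satisfying $\eta_x\equiv 1$ on $B_{cR}(x_0)$, $\eta_y\equiv 1$ on $(-\boundY/2,\boundY/2)$, with the standard $R^{-j}$ bounds on $\nabla_x^j\eta_x$ and $\boundY^{-j}$ bounds on $\partial_y^j\eta_y$. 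Because $B_R(x_0)\subset\Omega$, the function $\widetilde U^{(\beta)}\coloneqq\eta\,\dbeta U$ automatically has trace zero on $\Omega^c$.

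Since Cartesian derivatives commute with $\div(y^\alpha\nabla\cdot)$, the same product-rule computation as in Step~1 of the proof of Lemma~\ref{lem:localhighregularity3D-ef} yields
\begin{equation*}
  \div(y^\alpha\nabla\widetilde U^{(\beta)})=\widetilde F^{(\beta)},\qquad
  \partial_{n_\alpha}\widetilde U^{(\beta)}(\cdot,0)=\widetilde f^{(\beta)}\coloneqq\eta(\cdot,0)\dbeta f,
\end{equation*}
where $\widetilde F^{(\beta)}$ is the sum of $y^\alpha((\Delta_x\eta)\dbeta U+2\nabla_x\eta\cdot\nabla_x\dbeta U)$, the $y$-derivative terms in $\partial_y(y^\alpha\dbeta U\,\partial_y\eta)+y^\alpha\partial_y\dbeta U\,\partial_y\eta$, and $\eta\dbeta F$; it is supported in $\overline{B_{\tilde c R}(x_0)}\times[0,\boundY]$. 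Applying Lemma~\ref{lem:regularity3D} to $\widetilde U^{(\beta)}$ reduces the proof to controlling the three ingredients of $N^2(\widetilde U^{(\beta)},\widetilde F^{(\beta)},\widetilde f^{(\beta)})$.

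For $\|\nabla\widetilde U^{(\beta)}\|_{L^2_\alpha}$ and $\|\widetilde F^{(\beta)}\|_{L^2_{-\alpha}}$, I would invoke Corollary~\ref{cor:CaccHighInt} twice: once with the multi-index $\beta$, and once with any $\widetilde\beta$ of length $p-1$ to cover the terms where the commutator deposits a derivative on $\eta$ or $\Delta_x\eta$. The bookkeeping trick $(\gamma(p-1))^{-2}\leq\max\{1,\gamma^{-2}\}$ used in Step~2 of the proof of Lemma~\ref{lem:localhighregularity3D-ef} absorbs the $(\gamma(p-1))^{2(p-1)}\tNpmone_{B_R^\boundY}(F,f)$ factor into a multiple of $(\gamma p)^{2p}\tNp_{B_R^\boundY}(F,f)$. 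The data term $\|\widetilde f^{(\beta)}\|_{H^{1-s}(\Omega)}$ is bounded via Lemma~\ref{lemma:localization-fractional-norms}, which splits into an $L^2$-piece controlled by $\max_{|\eta|=p}\|\deta f\|_{L^2(B_R)}^2$ and a seminorm piece controlled by $\max_{|\eta|=p+1}\|\deta f\|_{L^2(B_R)}^2$; both pieces fit directly into the definition \eqref{eq:Ntilde} of $\tNp_{B_R^\boundY}(F,f)$, yielding the factor $(\gamma p)^{2p}(1+(\gamma p)^2)$.

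Assembling the three bounds inside $N^2$ and inserting into \eqref{eq:reg2D_in_highord} (the same global shift estimate that was used in the boundary case) produces the desired $\int y^\alpha\|\nabla\widetilde U^{(\beta)}(\cdot,y)\|_{H^t(\Omega)}^2\,dy$ bound with the right-hand side of \eqref{eq:lem:localhighregularity3D-int}. Finally, \cite[Thm.~1.4.4.3]{Grisvard} converts this $H^t(\Omega)$ norm into the weighted norm with weight $r_{\partial\Omega}^{-t}$, and the identity $\widetilde U^{(\beta)}=\dbeta U$ on $B_{cR}(x_0)\times(0,\boundY/2)$ yields \eqref{eq:lem:localhighregularity3D-int}. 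I do not expect any genuine obstacle here: the argument is a verbatim simplification of the proof of Lemma~\ref{lem:localhighregularity3D-ef}, and in fact is slightly cleaner because the interior setting eliminates both the geometric case distinctions (half ball versus wedge) and the tangential-to-Cartesian conversion factors present in the boundary case.
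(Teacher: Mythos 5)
Your proposal is correct and takes essentially the same approach as the paper: the paper's own proof of this lemma is a single sentence deferring to the proof of Lemma~\ref{lem:localhighregularity3D-ef} with Corollary~\ref{cor:CaccHighInt} replacing the boundary Caccioppoli bounds, and you have simply spelled out the details implicit in that reference (including the correct observations that $\tr\,\eta\,\dbeta U=0$ on $\Omega^c$ now follows from the compact support of $\eta_x$ in $\Omega$ rather than from tangentiality, and that the $3^{j/2}$ conversion factor is vacuously absorbed since $3^j\geq 1$ is built into the definition of $\tNp$).
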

\begin{proof}
  The proof is the same as that of Lemma~\ref{lem:localhighregularity3D-ef},
  with Corollary~\ref{cor:CaccHighInt} replacing
  Corollary~\ref{cor:CaccHighBound-e} or \ref{cor:CaccHighBound-f}.
\end{proof}

\newpage
\section{Weighted $H^p$-estimates in polyhedra}
\label{sec:WghHpPolygon}
In this section, 
we derive higher order weighted regularity results, 
at first for the extension problem and finally for the fractional PDE.
The strategy is as in the two-dimensional case:
we first introduce suitable countable, locally finite coverings of the
various neighborhoods in Section~\ref{sec:Cover}.
We then obtain in each of the neighborhoods local, Caccioppoli-type regularity
shifts for the solution $U$ of the CS extension defined in Section~\ref{sec:CSExt-sub},
in Section~\ref{sec:WgtHpRegExtPbm}.
Finally, in Section~\ref{sec:WgtHpRegu}, we deduce from the estimates on $U$ 
the analytic regularity results for the solution $u$ of \eqref{eq:weakform}.
%
\subsection{Coverings}
\label{sec:Cover}
As in space dimension $d=2$, \cite{FMMS21}, 
a main ingredient in the proofs of a-priori estimates are suitable 
localizations of all the geometric neighborhoods in the partition \eqref{eq:Nghbrhoods}
of the polyhedron $\Omega$.

This is achieved by covering such neighborhoods by balls, half-balls or wedges with the following two properties: 
a) their diameter is proportional to the distance to 
lower-dimensional singular supports, i.e., vertices, edges and faces,
and
b) scaled versions of the balls/cut-balls satisfy a locally finite overlap property.

The general procedure in our construction of suitable localized coverings
of all neighborhoods is hierarchic with respect to the dimension of the singular support set: 
if $\omega_{\bullet}$ is close to only one singular component, i.e., to either one
vertex, edge or face (i.e. $\bullet \in \{\bfv,\bfe,\bff\}$), 
we use balls inscribed in $\Omega$ with radii
proportional to the distance to $\partial \Omega$. 

For $\omega_{\bullet}$ close to two singular components of $\partial \Omega$, 
i.e., $\bullet \in \{\bfv\bfe,\bfv\bff,\bfe\bff\}$, 
we localize at first with half-balls (in case of neighborhoods close to faces) centered on $\bff$ 
in direction of the edge/vertex or wedges (in case of $\omega_{\bfv\bfe}$) in direction of the vertex.
Then, the half-balls/wedges are localized again using balls centered in $\Omega$ 
in direction of the face/edge (implicitly 
done in Lemma~\ref{lemma:regularity-W} and Lemma~\ref{lemma:regularity-halfball}).  

For $\omega_{\bullet}$ situated simultaneously close to three singular components of $\partial\Omega$, 
i.e. belonging to vertex-edge-face-neighborhoods, 
we first localize with wedges centered on the edge in direction of the vertex, 
then with half-balls centered on the face in direction of the edge,
and 
finally with balls centered in $\Omega$ in direction of the face.

As in the two-dimensional case \cite[Lemma 5.1]{FMMS21},  
we work with local estimates obtained from Besicovitch's Covering Theorem.
\begin{lemma}[{\cite[Lem.~{A.1}]{melenk-wohlmuth12}, \cite[Lem.~{A.1}]{horger-melenk-wohlmuth13}}]
\label{lemma:MW}
Let $\omega\subset {\mathbb R}^d$ be bounded, open and let $M \subset \partial\omega$ be closed, and nonempty.
Fix $c$, $\zeta \in (0,1)$ such that $1 -c (1+\zeta) \eqqcolon c_0 > 0$. 
For each $x \in \omega$, let $B_x\coloneqq  \overline{B}_{c \operatorname{dist}(x,M)}(x)$ be the closed ball of 
radius $c \operatorname{dist}(x,M)$ centered at $x$, 
and let
$\widehat{B}_x\coloneqq  \overline{B}_{(1+\zeta) c \operatorname{dist}(x,M)}(x)$ 
be the scaled closed ball of radius $(1+\zeta) c \operatorname{dist}(x,M)$ centered at $x$.  

Then, there is a countable set 
$(x_i)_{i \in {\mathcal I}} \subset  \omega$ 
(for some suitable index set ${\mathcal I} \subset {\mathbb N}$) 
and a number $N \in {\mathbb N}$ 
depending solely on $d$, $c$, $\zeta$ with the following properties:
\begin{enumerate}
\item 
(covering property) $\bigcup_{i} B_{x_i} \supset \omega$. 
\item
(finite overlap) $\operatorname{card}\{i\,|\, x \in \widehat{B}_{x_i} \} \leq N$
for all $x \in {\mathbb R}^d$.
\end{enumerate}
\end{lemma}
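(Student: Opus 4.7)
The plan is to derive the lemma from a Vitali-type selection combined with a volume-packing argument. Throughout, I write $r(x) \coloneqq c \operatorname{dist}(x, M)$, so that $B_x = \overline{B}_{r(x)}(x)$ and $\widehat{B}_x = \overline{B}_{(1+\zeta)r(x)}(x)$.

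First, I would apply the Vitali covering lemma to the family $\{B(x, r(x)/5)\}_{x \in \omega}$ to extract a countable, pairwise disjoint subfamily $\{B(x_i, r(x_i)/5)\}_{i \in \mathcal{I}}$ whose fivefold enlargements $\{B_{x_i}\}_{i \in \mathcal{I}}$ still cover $\omega$. This already yields property (1). Countability is a consequence of the disjointness of the shrunken cores together with the boundedness of $\omega$, which forces only countably many of them to have positive Lebesgue measure (recall $r(x)>0$ since $x \in \omega$ and $M \subset \partial\omega$ is closed).

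The second step is a comparability estimate that crucially uses $c_0 = 1 - c(1+\zeta) > 0$. If $x \in \widehat{B}_{x_i}$, then $|x - x_i| \leq (1+\zeta) c \operatorname{dist}(x_i, M)$, and since $y \mapsto \operatorname{dist}(y, M)$ is $1$-Lipschitz,
\begin{equation*}
  c_0 \operatorname{dist}(x_i, M) \;\leq\; \operatorname{dist}(x, M) \;\leq\; \bigl(1 + (1+\zeta)c\bigr) \operatorname{dist}(x_i, M).
\end{equation*}
Hence, whenever several enlarged balls $\widehat{B}_{x_i}$ contain a common point $x$, their radii $r(x_i)$ are all comparable to $c \operatorname{dist}(x, M)$, with constants depending only on $c, \zeta$.

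The third step is the volume-counting argument that gives property (2). Fix $x \in \mathbb{R}^d$ and set $\mathcal{I}_x \coloneqq \{i \in \mathcal{I} : x \in \widehat{B}_{x_i}\}$. By step two, each $x_i$ with $i \in \mathcal{I}_x$ lies in a fixed ball $B(x, C_1 \operatorname{dist}(x, M))$ with $C_1 = C_1(c, \zeta)$, so the disjoint cores $B(x_i, r(x_i)/5)$ are contained in a single enlarged ball of radius $C_2 \operatorname{dist}(x, M)$ with $C_2 = C_2(c, \zeta)$. Since each core has radius at least $\tfrac{1}{5} c\, c_0 \operatorname{dist}(x, M)$, comparing Lebesgue measures bounds $|\mathcal{I}_x|$ by $N = (5 C_2 / (c\, c_0))^d$, which depends only on $d, c, \zeta$.

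The only delicate point is ensuring that $c_0 > 0$ translates into both (a) a uniform lower bound for $\operatorname{dist}(x_i, M)/\operatorname{dist}(x, M)$ on overlaps and (b) the disjointness of the $1/5$-cores needed for the packing; these are exactly what step two and the choice of Vitali shrinkage factor achieve. Everything else is a standard application of the Vitali lemma and volume comparison.
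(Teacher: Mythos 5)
The paper does not reprove this lemma: it is imported verbatim from \cite{melenk-wohlmuth12} and \cite{horger-melenk-wohlmuth13}, and the surrounding discussion (``local estimates obtained from Besicovitch's Covering Theorem'') indicates that those references obtain the covering through Besicovitch's theorem. Your self-contained proof via the Vitali $5r$-lemma, the $1$-Lipschitz comparability of $\operatorname{dist}(\cdot,M)$, and a volume-packing count is correct, and all the estimates check out. In particular, (i) the uniform bound on the radii $r(x)=c\operatorname{dist}(x,M)$ needed to invoke Vitali holds because $\omega$ is bounded and $M\subset\partial\omega$; (ii) the covering property follows since each $x\in\omega$ lies in its own shrunken core and $r(x)>0$; (iii) the two-sided estimate
\begin{equation*}
c_0\,\operatorname{dist}(x_i,M)\;\leq\;\operatorname{dist}(x,M)\;\leq\;\bigl(1+(1+\zeta)c\bigr)\operatorname{dist}(x_i,M)
\qquad\text{whenever }x\in\widehat{B}_{x_i}
\end{equation*}
is exactly what makes $c_0>0$ essential, and it immediately yields both the containment of the $1/5$-cores $B(x_i,r(x_i)/5)$, $i\in\mathcal{I}_x$, in a common ball of radius $C_2(c,\zeta)\operatorname{dist}(x,M)$ and a lower bound $\tfrac{c\,c_0}{5}\operatorname{dist}(x,M)$ on each core radius (your lower bound is slightly conservative compared with the sharper $\tfrac{c}{5(1+(1+\zeta)c)}\operatorname{dist}(x,M)$, but this only inflates $N$ by a factor depending on $c,\zeta$ and is therefore admissible). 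The case $\operatorname{dist}(x,M)=0$ is vacuous since then $\mathcal{I}_x=\emptyset$, so the overlap bound holds for all $x\in\mathbb{R}^d$ as required. Compared with the Besicovitch route the referenced works take, your approach is more elementary (it avoids the Besicovitch selection and its dimensional constant), but both need the same Lipschitz comparability step to pass from overlap of the covering balls $B_{x_i}$ to overlap of the enlarged balls $\widehat{B}_{x_i}$; Besicovitch buys a cleaner dependence of $N$ on $d$, while Vitali keeps the argument fully self-contained at the cost of a coarser (but still admissible) $N$.
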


\subsubsection{Covering of $\omegav$, $\omegae$, and $\omegaf$}

We start with coverings of vertex, edge and face neighborhoods and provide coverings using balls 
insribed in $\Omega$ whose size is proportional 
to their distance to the vertex, edge or face, respectively.
\begin{lemma}[covering of $\omega_{\bullet}$, $\bullet\in\{\mathbf{v}, \mathbf{e}, \mathbf{f}\}$]
\label{lemma:covering-omega_c}
Given ${\bullet} \in \mathcal{V}  \cup \mathcal{E} \cup \mathcal{F} $ and $\omegaeps > 0$, 
there are parameters $0 < c < \widehat c < 1$ as well as 
points $(x_i)_{i \in {\mathbb N}}\subset \omega_{\bullet} = \omega^\omegaeps_{\bullet}$ 
such that: 
\begin{enumerate}[(i)]
\item 
The collection 
${\mathcal B}\coloneqq  \{B_i\coloneqq  B_{c \operatorname{dist}(x_i,{\bullet})}(x_i) \,|\, i \in {\mathbb N}\}$ 
of open balls covers $\omega_{\bullet}$.  
\item
The collection 
$\widehat {\mathcal B}\coloneqq  \{\widehat B_i\coloneqq  B_{\widehat c \operatorname{dist}(x_i,{\bullet})}(x_i) \,|\, i \in {\mathbb N}\}$ 
of open balls satisfies a finite overlap property, i.e., there is an integer $N > 0$ 
depending only on the spatial dimension $d = 3$ and the parameters $c$, $\hc$ such that 
$\card \{i\,|\, x \in \widehat B_i\} \leq N$ for all $x \in \R^3$. 
The balls from $\widehat{\mathcal B}$ are contained in $\Omega$. 
\end{enumerate}
\end{lemma}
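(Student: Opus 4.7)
The plan is to apply Lemma~\ref{lemma:MW} (the Besicovitch-type result of \cite{melenk-wohlmuth12,horger-melenk-wohlmuth13}) to each neighborhood $\omega_{\bullet}$ separately, with the set $M$ chosen as the singular support to which $\omega_{\bullet}$ abuts: $M = \{\bfv\}$ when $\bullet = \bfv$, $M = \overline{\bfe}$ when $\bullet = \bfe$, and $M = \overline{\bff}$ when $\bullet = \bff$. In each case, by the very definitions of the neighborhoods in Section~\ref{sec:PartOmega}, the function $\operatorname{dist}(x,\bullet)$ coincides, on $\omega_\bullet$, with $\rv$, $\re$, or $\rf$ respectively, and $\bullet$ meets $\partial \omega_\bullet$, so Lemma~\ref{lemma:MW} is applicable. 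This immediately yields countable sets of centers $(x_i)_{i\in \N}$, the covering property (i), and the finite-overlap property in (ii), with the overlap constant $N$ depending only on $d=3$ and the chosen parameters $c,\zeta$ (and thus $c,\hc$ with $\hc = c(1+\zeta)$).

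The remaining part, and the only substantive step, is to verify that for sufficiently small $c, \hc \in (0,1)$ the enlarged balls $\widehat B_i$ lie inside $\Omega$. I would argue this separately for each of the three cases by exploiting the defining inequalities of $\omega_\bullet$:

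\begin{itemize}
\item For $\bullet = \bff$: points $x \in \omegaf$ satisfy $\rv(x) \geq \omegaeps$ and $\re(x) \geq \omegaeps^2$ for all vertices and edges of $\partial\Omega$, and $\rf(x) < \omegaeps^3$, so the nearest boundary point lies on the open plane face $\bff$. Choosing $\hc < 1$ (and $\omegaeps$ small so that the flat face extends far enough relative to $\omegaeps^3$), the ball $B_{\hc \rf(x)}(x)$ does not reach the face's boundary and $\widehat B_i \subset \Omega$.
\item For $\bullet = \bfe$: points $x \in \omegae$ satisfy $\rv(x) \geq \omegaeps$, $\re(x) < \omegaeps^2$, and $\rhoef(x) \geq \omegaeps$ for all faces of $\mathcal{F}_\bfe$, hence $\rf(x) \geq \omegaeps \re(x)$ for every such face. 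Taking $\hc < \omegaeps/2$, the ball $B_{\hc \re(x)}(x)$ cannot cross any face adjacent to $\bfe$, nor reach the vertices (which lie at distance $\geq \omegaeps \gg \omegaeps^2$).
\item For $\bullet = \bfv$: points $x \in \omegav$ satisfy $\rhove(x) \geq \omegaeps$ and $\rhoef(x) \geq \omegaeps$ for all incident edges and faces, so $\re(x) \geq \omegaeps \rv(x)$ and $\rf(x) \geq \omegaeps \re(x) \geq \omegaeps^2 \rv(x)$. Choosing $\hc < \omegaeps^2/2$ then guarantees $\widehat B_i \subset \Omega$.
\end{itemize}

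The main (and essentially only) obstacle is this geometric verification of $\widehat B_i \subset \Omega$; it is not automatic from Lemma~\ref{lemma:MW} alone, because that lemma controls only the distance to $M$, not to the remaining parts of $\partial\Omega$. Once the parameters $c < \hc$ are fixed small enough (depending on $\omegaeps$ and the opening angles of $\Omega$ at $\bfv,\bfe,\bff$) to ensure the enlarged balls avoid all other components of $\partial\Omega$, both (i) and (ii) follow directly from Lemma~\ref{lemma:MW}, completing the proof.
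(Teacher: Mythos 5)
Your proof is correct and takes essentially the same approach as the paper: apply Lemma~\ref{lemma:MW} with $M$ equal to the singular support ($\{\bfv\}$, $\overline{\bfe}$, or $\overline{\bff}$) and take the parameters small enough. The paper compresses the containment argument into the phrase ``sufficiently small parameters $c$, $\zeta$'' without spelling out the case-by-case geometry you give, but does additionally note two minor points you omit: that Lemma~\ref{lemma:MW} produces \emph{closed} balls, so one slightly enlarges $c$ to pass to open balls, and that $c<1$ forces the index set to be infinite (justifying indexing by $\N$).
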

\begin{proof}
Apply Lemma~\ref{lemma:MW} with $M = \{\bullet\}$ 
and sufficiently small parameters $c$, $\zeta>0$. 
Observe that by possibly slightly increasing the parameter $c$, 
one can ensure that the open balls rather than the closed balls 
given by Lemma~\ref{lemma:MW} cover $\omega_{\bullet}$. 
Also, since $c < 1$, 
the index set ${\mathcal I}$ of Lemma~\ref{lemma:MW} 
cannot be finite so that we may assume 
${\mathcal I} = {\mathbb N}$.  
%
\end{proof}
\subsubsection{Covering of $\omegaef$}
We now introduce a covering of edge-face neighborhoods $\omegaef$. 
We start by a
covering of half-balls resting on the face $\bff$ and with 
size proportional to the distance from the edge.
\begin{lemma}
\label{lemma:covering-omega_ef}
Given $\bfe \in \cE$, $\bff \in \cF_\bfe$, there are $\omegaeps> 0$ and parameters $0 < c< \widehat c < 1$ 
as well as points $(x_i)_{i \in {\mathbb N}} \subset \bff$ such that,
denoting $R_i = c\dist(x_i, \bfe)$ and $\hR_i = \hc \dist(x_i, \bfe)$:
\begin{enumerate}[(i)]
\item 
The sets $H_i\coloneqq  B_{R_i}(x_i) \cap \Omega$ are half-balls and the collection 
$\cB\coloneqq  \{H_i\,|\, i \in {\mathbb N}\}$ covers $\omegaef = \omegaef^\omegaeps$.  

\item
The collection $\hcB\coloneqq  \{\widehat H_i\coloneqq  B_{\hR_i}(x_i) \cap \Omega\}$ is a collection of half-balls and 
satisfies a finite overlap property, i.e., there is $N > 0$ 
depending only on the spatial dimension $d = 3$ and the parameters $c$, $\hc$ 
such that 
$\card \{i\,|\, x \in \widehat H_i\} \leq N$ for all $x \in \R^3$. 
\end{enumerate}
\end{lemma}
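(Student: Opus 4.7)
The plan is to reduce this three-dimensional covering problem to a two-dimensional Besicovitch covering of a strip in $\bff$ adjacent to $\bfe$, and then to lift each planar disk to a three-dimensional half-ball. The radii must be chosen small enough to guarantee the half-ball property across the dihedral angle at $\bfe$, while the parameter $\omegaeps$ controls how close points of $\omegaef$ lie to $\bff$.

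First I would introduce the orthogonal projection $\pi\colon\omegaef\to\bff$ onto the plane containing $\bff$. By definition of $\omegaef$, every $x\in\omegaef$ satisfies $\rf(x)<\omegaeps\,\re(x)$ and $\rv(x)\ge\omegaeps$ for every $\bfv\in\cV_\bfe$; hence for $\omegaeps$ sufficiently small $\pi(x)$ lies in a strip $S\subset\bff$ that is adjacent to $\bfe$ but bounded away from $\cV_\bfe$, and the normal displacement $|x-\pi(x)|=\rf(x)$ is a small fraction of $\dist(\pi(x),\bfe)\simeq\re(x)$. Applying Lemma~\ref{lemma:MW} inside the plane of $\bff$ with $M=\bfe$, and with a two-dimensional scaling constant $c_0$ slightly smaller than the target $c$, produces a countable set $(x_i)_{i\in\N}\subset\bff$ such that the planar disks $B_{c_0\dist(x_i,\bfe)}(x_i)\cap\bff$ cover $S\supset\pi(\omegaef)$ and such that enlarged planar disks of radius between $c$ and $\hc$ times $\dist(x_i,\bfe)$ have finite overlap in $\bff$. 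I would then set $R_i:=c\,\dist(x_i,\bfe)$, $\hR_i:=\hc\,\dist(x_i,\bfe)$, and form the three-dimensional sets $H_i:=B_{R_i}(x_i)\cap\Omega$, $\widehat H_i:=B_{\hR_i}(x_i)\cap\Omega$.

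The main obstacle will be the simultaneous calibration of $c$, $\hc$, and $\omegaeps$ against the geometric data of $\Omega$ so that both the half-ball property and the covering property hold. For the half-ball property, I would pick $\hc<\sin\theta_{\min}$, where $\theta_{\min}:=\min_{\bff'\in\cF_\bfe\setminus\{\bff\}}\theta_{\bff,\bff'}>0$ is the minimum dihedral angle at $\bfe$; together with the fact that $S$ stays bounded away from $\cV_\bfe$, this ensures that $B_{\hR_i}(x_i)$ is disjoint from every boundary component of $\Omega$ other than the plane of $\bff$, so the intersection with $\Omega$ is cut only by that plane, i.e., is a half-ball. For the covering property, given $x\in\omegaef$ with $\pi(x)$ in the planar disk of some index $i$, the Pythagorean identity gives
\begin{equation*}
|x-x_i|^2 = |x-\pi(x)|^2+|\pi(x)-x_i|^2 < \omegaeps^2\,\re(x)^2+c_0^2\,\dist(x_i,\bfe)^2,
\end{equation*}
and using $\re(x)\le(1-\omegaeps)^{-1}(1+c_0)\dist(x_i,\bfe)$ this lies below $c^2\dist(x_i,\bfe)^2$ as soon as $\omegaeps$ is small enough in terms of $c-c_0$, yielding $x\in H_i$. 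The finite overlap of $\{\widehat H_i\}$ then transfers from the two-dimensional overlap of the enlarged planar disks by projecting via $\pi$ and absorbing the small normal displacement into a marginally larger planar radius.
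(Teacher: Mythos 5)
Your argument is correct and follows essentially the same route as the paper: apply the Besicovitch covering lemma (Lemma~\ref{lemma:MW}) in the plane of $\bff$ with $M=\bfe$ to obtain points $(x_i)\subset\bff$, lift the planar disks to three-dimensional half-balls, and transfer the finite-overlap bound from 2D to 3D by orthogonal projection onto the plane of $\bff$. You merely make two steps more explicit than the paper does — the Pythagorean estimate showing the 3D balls cover the thin slab $\omegaef^\omegaeps$ for $\omegaeps$ small, and the condition $\hc<\sin\theta_{\min}$ ensuring the half-ball property — while the paper simply takes $c$ "sufficiently small" and adjusts it by a factor $(1+\zeta/2)$ afterward.
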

\begin{proof}
Let $\widetilde{\bff}$ be the (infinite) plane containing $\bff$. 
We apply Lemma~\ref{lemma:MW} to the 2D plane surface $\bff\cap \partial\omegaef^\omegaeps$ (for some sufficiently small $\omegaeps$) 
 and $M\coloneqq \{\bfe \}$ and the parameter $c$ sufficiently small so that 
$B_{2 c \dist(x,\bfe)}(x) \cap \Omega $ is a half-ball for all $x \in \bff \cap \partial\omegaef^\omegaeps$. 
Lemma~\ref{lemma:MW} provides a collection $(x_i)_{i \in {\mathbb N}} \subset
\bff$ such that the balls 
$B_i\coloneqq  B_{R_i}(x_i) \subset {\mathbb R}^3$
and the scaled balls $\widehat{B}_i\coloneqq  B_{c (1+\zeta)\dist(x_i,\bfe)}(x_i) \subset {\mathbb R}^3$ (for suitable, sufficiently small $\zeta$)
satisfy the following: the 2D balls $\{B_i\cap \widetilde{\bff}\,|\, i \in {\mathbb N}\}$  
cover $\partial\omegaef^\omegaeps \cap \bff$, and the 2D balls 
$\{\widehat B_i\cap \widetilde {\bff}\,|\, i \in {\mathbb N}\}$ satisfy a finite overlap condition on $\widetilde{\bff}$. 
By possibly slightly increasing the parameter $c$ (e.g., by replacing $c$ with $c(1+\zeta/2)$), 
the newly defined balls $B_i$ then cover a set 
$\omegaef^{\omegaeps}$ 
for a possibly reduced $\omegaeps$. 
It remains to see that the balls $\widehat B_i$ satisfy a finite overlap condition on ${\mathbb R}^2$: 
given $x \in \widehat B_i$,
its projection $x_{\bff}$ onto $\widetilde{\bff}$ satisfies $x_{\bff} \in \widehat B_i\cap \widetilde{\bff}$ 
since $x_i \in {\bff} \subset \widetilde{\bff}$. 
This 
implies that the overlap constants of the 3D balls $\widehat B_i$ in ${\mathbb
  R}^3$ is the same as the overlap constant of the 2D balls
$\widehat B_i \cap \widetilde{\bff}$ in $\widetilde{\bff}$. 
The
half-balls $H_i\coloneqq B_i \cap \Omega$ and $\widehat H_i\coloneqq  \widehat B_i \cap \Omega$ 
have the stated properties. 
%
\end{proof}
\subsubsection{Covering of $\omegavf$}
Similarly, we provide a covering of the vertex-face neighborhoods $\omegavf$ using half-balls centered on the face $f$.
\begin{lemma}
  \label{lemma:covering-omega_vf}
  Given $\bfv \in \cV$, $\bff \in \cF_\bfv$, there are $\omegaeps> 0$ and parameters $0 < c< \widehat c < 1$ 
  as well as points $(x_i)_{i \in {\mathbb N}} \subset \bff$ such that,
  denoting $R_i = c\dist(x_i, \bfv)$ and $\hR_i = \hc \dist(x_i, \bfv)$:
  \begin{enumerate}[(i)]
  \item 
    The sets $H_i\coloneqq  B_{R_i}(x_i) \cap \Omega$ are half-balls and the collection 
    $\cB\coloneqq  \{H_i\,|\, i \in {\mathbb N}\}$ covers $\omegavf = \omegavf^\omegaeps$.  
  \item
    The collection $\hcB\coloneqq  \{\widehat H_i\coloneqq  B_{\hR_i}(x_i) \cap \Omega\}$ is a collection of half-balls and 
    satisfies a finite overlap property, i.e., 
there is $N > 0$ depending only on the spatial dimension $d = 3$ and the parameters $c$, $\hc$ 
such that 
    $\card \{i\,|\, x \in \widehat H_i\} \leq N$ for all $x \in \R^3$. 
  \end{enumerate}
\end{lemma}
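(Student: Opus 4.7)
The plan is to mirror the proof of Lemma~\ref{lemma:covering-omega_ef} almost verbatim, replacing the edge $\bfe$ by the vertex $\bfv$ as the lower-dimensional singular set on the face. Let $\widetilde{\bff}$ denote the (infinite) plane containing $\bff$ and note that $\bfv \in \overline{\bff} \subset \widetilde{\bff}$, since $\bff\in\cF_\bfv$.

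First I would apply Lemma~\ref{lemma:MW} in the two-dimensional setting, taking as the ambient open set $\bff\cap \partial\omegavf^\omegaeps$ (for some sufficiently small $\omegaeps>0$) and $M \coloneqq \{\bfv\}$. The parameter $c$ is chosen small enough that, for every $x\in \bff\cap \partial\omegavf^\omegaeps$, the 3D ball $B_{2c\dist(x,\bfv)}(x)\cap \Omega$ is a genuine half-ball resting on $\widetilde{\bff}$; this is possible because distance to $\bfv$ controls distance to the boundary of $\widetilde{\bff}$ inside $\bff$, and $\omegaeps$ can be shrunk so that no other face of $\partial\Omega$ intrudes. The lemma then yields a countable set $(x_i)_{i\in\N}\subset \bff$ and a parameter $\zeta>0$ such that the 2D balls $B_i\cap \widetilde{\bff}$ cover $\bff\cap \partial\omegavf^\omegaeps$ and the enlarged balls $\widehat B_i\cap \widetilde{\bff}$, with $\widehat c = c(1+\zeta)$, have finite overlap in $\widetilde{\bff}$.

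Next, by slightly enlarging $c$ (e.g., replacing it by $c(1+\zeta/2)$) and, if necessary, shrinking $\omegaeps$, the 3D open half-balls $H_i = B_{R_i}(x_i)\cap \Omega$ with $R_i = c\dist(x_i,\bfv)$ cover $\omegavf^\omegaeps$: any point in $\omegavf$ lies above some point of $\bff$ within distance comparable to $\dist(\cdot,\bfv)$, hence is contained in one of the half-balls. For the finite overlap of the enlarged half-balls $\widehat H_i = B_{\hR_i}(x_i)\cap \Omega$, the argument used for $\omegaef$ transfers directly: given $x\in \widehat H_i$, its orthogonal projection $x_{\bff}$ onto $\widetilde{\bff}$ lies in $\widehat B_i\cap \widetilde{\bff}$ because $x_i\in \widetilde{\bff}$, so the 3D overlap count is bounded by the 2D overlap count provided by Lemma~\ref{lemma:MW}, which depends only on $d=3$, $c$ and $\hc$.

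The only step requiring care is the very first one, namely choosing $\omegaeps$ and $c$ so that the half-ball property is preserved uniformly over the covering; this must reflect the local geometry of $\Omega$ near $\bfv$, specifically that all other faces and edges abutting $\bfv$ stay away from the cone over $\bff$ near $\bfv$. Since $\Omega$ is a Lipschitz polyhedron with finitely many faces, a sufficiently small $\omegaeps>0$ and a sufficiently small $c>0$ exist. I do not expect any obstacle beyond this geometric bookkeeping, and the proof structurally coincides with that of Lemma~\ref{lemma:covering-omega_ef}.
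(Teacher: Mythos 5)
Your proposal is correct and matches the paper's intent exactly: the paper proves Lemma~\ref{lemma:covering-omega_vf} by stating that the proof is identical to that of Lemma~\ref{lemma:covering-omega_ef}, and you have simply unpacked that — apply Lemma~\ref{lemma:MW} in the plane $\widetilde{\bff}$ with $M=\{\bfv\}$ in place of $M=\{\bfe\}$, choose $c$ and $\omegaeps$ small enough that the resulting balls cut half-balls out of $\Omega$, and transfer the 2D finite-overlap bound to 3D via orthogonal projection onto $\widetilde{\bff}$. No gap.
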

\begin{proof}
  The proof is the same as the proof of Lemma \ref{lemma:covering-omega_ef}.
\end{proof}

 
\subsubsection{Covering of $\omegave$}
For the vertex-edge neighborhoods $\omegave$, 
we introduce a covering using wedges centered on the edge with size proportional to the distance to the vertex.
\begin{lemma}
\label{lemma:covering-omega_ve}
Given $\bfv \in \cV$, $\bfe \in \cE_\bfv$, there are $\omegaeps> 0$ and parameters $0 < c< \widehat c < 1$ 
as well as points $(x_i)_{i \in {\mathbb N}} \subset \bfe$ such that,
denoting $R_i = c\dist(x_i, \bfv)$ and $\hR_i = \hc \dist(x_i, \bfv)$:
\begin{enumerate}[(i)]
\item 
The collection of wedges 
$\cB\coloneqq  \{W_i\subset  B_{R_i}(x_i) \cap \Omega\}_{i\in\N}$ covers $\omegave = \omegave^\omegaeps$.  

\item
The collection of wedges $\hcB\coloneqq  \{\widehat W_i\subset  B_{\hR_i}(x_i) \cap
\Omega\}_{i\in\N}$ satisfies $W_i\subset \widehat W_i$ 
and a finite overlap property, i.e., 
there is $N > 0$ depending only on the spatial dimension $d = 3$ and the parameters $c$, $\hc$ 
such that 
    $\card \{i\,|\, x \in \widehat W_i\} \leq N$ for all $x \in \R^3$.
%
\end{enumerate}
\end{lemma}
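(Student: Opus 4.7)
The plan is to mirror the arguments used in Lemma~\ref{lemma:covering-omega_ef} and Lemma~\ref{lemma:covering-omega_vf}, except that now the singular set driving the scaling is the vertex $\bfv$ (a 0-dimensional set) and the objects generated by the 1D covering on $\bfe$ are wedges in $\R^3$ rather than half-balls. The key observation is that when $x_i \in \bfe$ is near $\bfv$ and the radius $R_i$ is sufficiently small compared to $\dist(x_i,\bfv)$, the intersection $B_{R_i}(x_i) \cap \Omega$ is precisely a wedge determined by the two faces meeting at $\bfe$, because the ball is too small to see any other edge, vertex or face of $\partial\Omega$.

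First, I would apply Lemma~\ref{lemma:MW} to the 1D segment $\bfe \cap \partial\omegave^\omegaeps$ with $M = \{\bfv\}$ and parameters $c, \zeta > 0$ chosen small enough that $B_{2c\dist(x,\bfv)}(x)\cap \Omega$ is a wedge for every $x \in \bfe \cap \partial\omegave^\omegaeps$ (possibly shrinking $\omegaeps$ a posteriori). This produces a sequence $(x_i)_{i\in\N}\subset \bfe$ such that the 1D intervals $B_{R_i}(x_i)\cap \bfe$ cover a neighborhood of $\bfv$ on $\bfe$, and the enlarged intervals $B_{(1+\zeta)R_i}(x_i)\cap \bfe$ satisfy a 1D finite-overlap property. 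I then define the wedges $W_i := B_{R_i}(x_i)\cap \Omega$ and $\widehat W_i := B_{\hR_i}(x_i)\cap \Omega$ with $\hR_i = (1+\zeta) R_i$.

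To check the covering property, fix $x \in \omegave^\omegaeps$ and let $x_\bfe$ be its orthogonal projection onto $\bfe$, so that $|x - x_\bfe| = \re(x) < \omegaeps\, \rv(x)$. By the triangle inequality, $(1-\omegaeps)\rv(x) \leq \dist(x_\bfe,\bfv) \leq (1+\omegaeps)\rv(x)$, so the 1D cover hits $x_\bfe$: there is $i$ with $|x_\bfe - x_i|\leq c\,\dist(x_i,\bfv)$. Combining these estimates yields $|x - x_i|\leq (c + C\omegaeps)\dist(x_i,\bfv)$ for a constant $C$ depending only on $c$, which, provided $\omegaeps$ is small enough (and $c$ is slightly increased as in the proof of Lemma~\ref{lemma:covering-omega_ef}), lies below $R_i$; hence $x \in W_i$. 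For the finite-overlap property of $\widehat W_i$, I would use the same projection trick: if $x \in \widehat W_i$, the projection $x_\bfe$ of $x$ onto $\bfe$ satisfies $|x_\bfe - x_i| \leq |x - x_i| \leq \hR_i$ (since $x_i \in \bfe$), so the 3D overlap at $x$ is bounded by the 1D overlap of the enlarged intervals at $x_\bfe$, which is finite by Lemma~\ref{lemma:MW}.

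The only genuine obstacle is a careful, consistent choice of the constants $c, \hc, \omegaeps$: one must simultaneously ensure that (i) $\hR_i$ is small enough relative to $\dist(x_i,\bfv)$ for $\widehat W_i$ to be an honest wedge (in particular, the other endpoint of $\bfe$, as well as neighboring edges and faces, are uniformly avoided), and (ii) the wedges $W_i$ still cover all of $\omegave^\omegaeps$. Both are quantitative but follow the same playbook as in the edge-face and vertex-face cases and do not require new ideas.
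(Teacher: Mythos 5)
Your proposal follows the same route as the paper: apply Lemma~\ref{lemma:MW} one-dimensionally along $\bfe$ with $M=\{\bfv\}$, intersect the resulting balls with $\Omega$ to obtain wedges, establish the covering after slightly adjusting $c$ and $\omegaeps$, and transfer the finite-overlap constant from 1D to 3D by projecting onto the line through $\bfe$. Your write-up actually spells out the covering step (via the projection $x_\bfe$ and a triangle-inequality estimate) in more detail than the paper, which only asserts it after a slight increase of $c$, but the argument is the same in substance.
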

\begin{proof}
Let $\widetilde{\bfe}$ be the (infinite) line containing $\bfe$. 
We apply Lemma~\ref{lemma:MW} to the intervals $\bfe\cap \partial\omegave^\omegaeps$ (for some sufficiently small $\omegaeps$) 
 and $M\coloneqq \{\bfv \}$ and the parameter $c$ sufficiently small so that 
$B_{2 c \dist(x,\bfe)}(x) \cap \Omega $ is a wedge for all $x \in \bfe \cap \partial\omegave^\omegaeps$. 
Lemma~\ref{lemma:MW} provides a collection $(x_i)_{i \in {\mathbb N}} \subset
\bfe$ such that the balls 
$B_i\coloneqq  B_{R_i}(x_i) \subset {\mathbb R}^3$
and the scaled balls $\widehat{B}_i\coloneqq  B_{c (1+\zeta)\dist(x_i,\bfv)}(x_i) \subset {\mathbb R}^3$ (for suitable, sufficiently small $\zeta$)
satisfy the following: the intervals $\{B_i\cap \widetilde{\bfe}\,|\, i \in {\mathbb N}\}$  
cover $\partial\omegave^\omegaeps \cap \bfe$, and the intervals
$\{\widehat B_i\cap \widetilde {\bfe}\,|\, i \in {\mathbb N}\}$ satisfy a finite overlap condition on $\widetilde{\bfe}$. 
Upon increasing the parameter $c$ (e.g., by replacing $c$ with $c(1+\zeta/2)$), 
the newly defined balls $B_i$ then cover a set 
$\omegave^{\omegaeps}$ 
for a possibly reduced $\omegaeps$. 
It remains to see that the balls $\widehat B_i$ satisfy a finite overlap condition on ${\mathbb R}^2$: 
given $x \in \widehat B_i$,
its projection $x_{\bfe}$ onto $\widetilde{\bfe}$ satisfies $x_{\bfe} \in \widehat B_i\cap \widetilde{\bfe}$ 
since $x_i \in {\bfe} \subset \widetilde{\bfe}$. 
This 
implies that the overlap constants of the balls $\widehat B_i$ in ${\mathbb
  R}^3$ is the same as the overlap constant of the intervals
$\widehat B_i \cap \widetilde{\bfe}$ in $\widetilde{\bfe}$. 
The
wedges $W_i\coloneqq B_i \cap \Omega$ and $\widehat W_i\coloneqq  \widehat B_i \cap \Omega$ 
have the stated properties. 
%
\end{proof}

\subsubsection{Covering of $\omegavef$}
\label{sec:CovVEF}
In the same way, we obtain a covering of the vertex-edge-face neighborhoods $\omegavef$. 
\begin{lemma}
  \label{lemma:covering-omega_vef}
  Given $\bfv \in \cV$, $\bfe \in \cE_\bfv$, and $\bff\in \cF_\bfe \cap
  \cF_\bfv$, there are $\omegaeps> 0$ and parameters $0 < c< \widehat c < 1$ 
  as well as points $(x_i)_{i \in {\mathbb N}} \subset \bfe$ such that,
  denoting $R_i = c\dist(x_i, \bfv)$ and $\hR_i = \hc \dist(x_i, \bfv)$:
  \begin{enumerate}[(i)]
  \item 
    The sets $W_i\coloneqq  B_{R_i}(x_i) \cap \Omega$ are wedges  and the collection 
    $\cB\coloneqq  \{W_i\,|\, i \in {\mathbb N}\}$ covers $\omegavef = \omegavef^\omegaeps$.  

  \item
  The collection $\hcB\coloneqq  \{\widehat W_i\coloneqq  B_{\hR_i}(x_i) \cap \Omega\}$ 
  is a collection of wedges and 
  satisfies a finite overlap property, i.e., there is $N > 0$ 
depending only on the spatial dimension $d = 3$ and the parameters $c$, $\hc$ such that 
    $\card \{i\,|\, x \in \widehat W_i\} \leq N$ for all $x \in \R^3$. 
  \end{enumerate}
\end{lemma}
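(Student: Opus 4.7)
The plan is to imitate essentially verbatim the argument already used in the proof of Lemma~\ref{lemma:covering-omega_ve} for vertex-edge neighborhoods, since the two sets $\omegave^\omegaeps$ and $\omegavef^\omegaeps$ differ only in the condition on $\rhoef$, which plays no role in a covering built out of wedges centered on the edge $\bfe$. Concretely, I would let $\widetilde{\bfe}$ denote the infinite line containing $\bfe$ and apply Lemma~\ref{lemma:MW} with the 1D ambient $\bfe\cap\partial\omegavef^\omegaeps$ and singular set $M=\{\bfv\}$, choosing $c$ and $\zeta$ small enough that for every $x\in\bfe$ lying in the relevant portion of the edge the ball $B_{2c\dist(x,\bfv)}(x)\cap\Omega$ is a wedge (this is possible because $\bfe$ is a straight edge and the two adjacent faces are plane, so locally $\Omega$ is congruent to a dihedral wedge). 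This yields a sequence $(x_i)_{i\in\N}\subset\bfe$ together with radii $R_i=c\dist(x_i,\bfv)$, $\widehat R_i=\widehat c\dist(x_i,\bfv)$ such that the 1D intervals $B_i\cap\widetilde{\bfe}$ cover $\bfe\cap\partial\omegavef^\omegaeps$ and the intervals $\widehat B_i\cap\widetilde{\bfe}$ have finite 1D overlap.

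To upgrade the 1D covering of $\bfe$ to a 3D covering of $\omegavef^\omegaeps$, I would argue as follows. Given $x\in\omegavef^\omegaeps$, let $x_\bfe$ be the orthogonal projection of $x$ onto $\widetilde{\bfe}$, so that $|x-x_\bfe|=\re(x)<\omegaeps\,\rv(x)\le\omegaeps^2$ and $\rv(x_\bfe)\le(1+\omegaeps)\rv(x)$. Thus $x_\bfe$ lies in the portion of $\bfe$ covered by the intervals, so $x_\bfe\in B_{c\dist(x_i,\bfv)}(x_i)$ for some $i$, and then
\[
|x-x_i|\le|x-x_\bfe|+|x_\bfe-x_i|\le\omegaeps\rv(x)+c\dist(x_i,\bfv)\le(c+C\omegaeps)\dist(x_i,\bfv)
\]
for a constant $C$ depending only on $\Omega$ (using that $\rv(x)$ and $\dist(x_i,\bfv)$ are comparable). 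By enlarging $c$ slightly (say replacing $c$ by $c(1+\zeta/2)$) and reducing $\omegaeps$ accordingly, $x$ then lies in the enlarged wedge $W_i=B_{R_i}(x_i)\cap\Omega$. The finite overlap of the enlarged wedges $\widehat W_i=B_{\widehat R_i}(x_i)\cap\Omega$ is inherited from the 1D overlap: for any $x\in\widehat W_i$, the projection $x_\bfe$ onto $\widetilde{\bfe}$ still lies in $\widehat B_i\cap\widetilde{\bfe}$ (since $x_i\in\bfe\subset\widetilde{\bfe}$), so $\card\{i:x\in\widehat W_i\}\le\card\{i:x_\bfe\in\widehat B_i\cap\widetilde{\bfe}\}\le N$.

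The argument is almost mechanical given Lemma~\ref{lemma:MW} and the proof template from Lemma~\ref{lemma:covering-omega_ve}; the one place requiring a small check is the wedge property of $\widehat W_i$, which relies on taking $\widehat c$ small enough (independent of $i$) that the ball $B_{\widehat R_i}(x_i)$ lies inside a neighborhood of $\bfe$ on which $\Omega$ is congruent to an infinite dihedral wedge, i.e., it avoids all other faces, edges and vertices of $\partial\Omega$. This is ensured provided $\omegaeps$ is sufficiently small so that $\dist(x_i,\bfv)$ stays bounded by a fixed multiple of the distance from $\bfe$ to the closest competing lower-dimensional stratum, and is where the ``sufficiently small $\omegaeps$'' hypothesis is genuinely used; everything else is a direct transcription of the proof of Lemma~\ref{lemma:covering-omega_ve}.
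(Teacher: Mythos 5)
Your proposal is correct and follows essentially the same route as the paper, which simply invokes the proof of Lemma~\ref{lemma:covering-omega_ve} (applying Lemma~\ref{lemma:MW} on the edge with $M=\{\bfv\}$, projecting onto $\widetilde{\bfe}$ for the covering and finite-overlap upgrade, and shrinking $\omegaeps$ so the scaled balls intersected with $\Omega$ stay wedges). Your additional quantitative details about the projection $x_{\bfe}$ and the comparability of $\rv(x)$ with $\dist(x_i,\bfv)$ are a faithful unwinding of that argument and are sound.
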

\begin{proof}
  The proof is the same as that of Lemma \ref{lemma:covering-omega_ve}, with
  $\omegavef$ replacing $\omegave$.
\end{proof}

\subsection{Weighted $H^p$-regularity for the CS extension}
\label{sec:WgtHpRegExtPbm}
In the following, 
we provide separate weighted analytic regularity estimates
on extensions of each neighborhood $\omega_{\bullet}$ 
used to decompose $\Omega$ in \eqref{eq:Nghbrhoods}.
Hereby, for any set $\omega \subset \R^3$ and $\boundY>0$, 
define
$\omega^\boundY \coloneqq \omega \times (0, \boundY)$.
\subsubsection{Vertex neighborhoods $\omegav$}
We have 
\begin{align*}
r_{\mathbf{f}} \sim r_{\mathbf{e}} \sim r_{\mathbf{v}} \qquad \text{ on $\omegav$}. 
\end{align*}

The following lemma provides higher order regularity estimates in  
vertex-weighted norms for solutions 
to the Caffarelli-Silvestre extension problem with smooth data.

\begin{lemma}[Weighted $H^p$-regularity in $\omegav$] 
\label{lemma:regularity-omega_c}
Let $\omegav = \omegav^\omegaeps$ be given for some $\omegaeps > 0$ and $\bfv\in\cV$.  Let $U$ be the solution of \eqref{eq:minimization}.
There is $\gamma > 0$ depending only on $s$, $\Omega$, $\omegav$, and $\boundY$, 
and for every $\varepsilon \in (0,1/2)$, there exists $C_\varepsilon>0$ 
depending additionally on $\varepsilon$
such that for all $\beta\in \N^3_0$, with $p = \betam$,
\begin{align*}
  \|\rv^{p-1/2+\varepsilon} \dbeta \nabla U\|_{L^2_{\alpha}(\omegav\times (0, \boundY))}^2 &\leq
C_\varepsilon \gamma^{2p+1}p^{2p} \bigg[ \|f\|^2_{H^{1}(\Omega)} + \|F\|^2_{L^2_{-\alpha}(\R^3\times(0,\boundY))}   
       \\ & \qquad +
       \sum_{j=1}^{p} p^{-2j}\left(\max_{\etam=j}\norm{\deta f}^2_{L^2(\Omega)} 
  +\max_{\etam=j-1} \norm{\deta F}^2_{L^2_{-\alpha}(\R^3\times(0,\boundY))} \right)\bigg].
\end{align*}
\end{lemma}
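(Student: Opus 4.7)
The plan is to combine the dyadic covering of $\omegav$ from Lemma~\ref{lemma:covering-omega_c} with the interior local shift theorem (Lemma~\ref{lem:localhighregularity3D-int}) and the global weighted shift (Lemma~\ref{lem:estH13D}), mirroring the strategy used in the 2D analysis of \cite{FMMS21}. The key geometric feature of $\omegav$ is that on this neighborhood one has $\rv \sim \re \sim \rf \sim r_{\partial\Omega}$, since the defining conditions $\rhove \geq \omegaeps$ and $\rhoef \geq \omegaeps$ keep points uniformly away from edges and faces relative to their distance from the vertex. Hence the single weight $\rv$ plays the role of the full boundary distance.

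First I would invoke Lemma~\ref{lemma:covering-omega_c} to produce balls $B_i \subset \hB_i \subset \Omega$ centered at $x_i \in \omegav$ with radii $R_i \sim \hR_i \sim \rv(x_i)$, such that $\{B_i\}$ covers $\omegav$ and $\{\hB_i\}$ has the finite overlap property; on each $\hB_i$ one has $\rv \sim r_{\partial\Omega} \sim \hR_i$. Applying Lemma~\ref{lem:localhighregularity3D-int} on each interior ball with $R = \hR_i$ and a parameter $t \in [0,1/2)$ to be chosen yields
\begin{equation*}
  \|r_{\partial\Omega}^{-t}\dbeta\nabla U\|^2_{L^2_\alpha(B_i^{\boundY/2})}
  \leq C\, \hR_i^{-2p-1}(\gamma p)^{2p}(1+\gamma p)\bigl[\|\nabla U\|^2_{L^2_\alpha(\hB_i^\boundY)} + \hR_i^{s+1}\tNp_{\hB_i^\boundY}(F,f)\bigr].
\end{equation*}
Using the comparabilities above, the weight conversion $\rv^{p-1/2+\varepsilon} \leq C\hR_i^{p-1/2+\varepsilon+t}\,r_{\partial\Omega}^{-t}$ on $B_i$ (with a $p$-dependent geometric constant absorbed into $\gamma$) inserts a factor $\hR_i^{2p-1+2\varepsilon+2t}$, giving a per-ball prefactor of $C\,\hR_i^{2\varepsilon+2t-2}(\gamma p)^{2p}(1+\gamma p)$ multiplying the bracketed expression.

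The decisive choice is $t := 1/2 - \varepsilon/2$, which makes $2\varepsilon+2t-2 = \varepsilon-1 > -1$. Summing over $i$, the covering property controls the left-hand side by $\|\rv^{p-1/2+\varepsilon}\dbeta\nabla U\|^2_{L^2_\alpha(\omegav \times (0,\boundY/2))}$, while the finite overlap together with $\hR_i \sim r_{\partial\Omega}$ on $\hB_i$ give
\begin{equation*}
  \sum_i \hR_i^{\varepsilon-1}\|\nabla U\|^2_{L^2_\alpha(\hB_i^\boundY)}
  \leq C\,\|r_{\partial\Omega}^{-(1-\varepsilon)/2}\nabla U\|^2_{L^2_\alpha(\Omega^+)},
\end{equation*}
which is finite by Lemma~\ref{lem:estH13D} with $t_0 = (1-\varepsilon)/2 \in (0,1/2)$ and reduces via \eqref{eq:CUFf-simplified} to $C(\|f\|^2_{H^1(\Omega)} + \|F\|^2_{L^2_{-\alpha}})$. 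The data sum is routine: $\hR_i^{s+\varepsilon} \leq \diam(\Omega)^{s+\varepsilon}$ and finite overlap convert $\sum_i \tNp_{\hB_i^\boundY}(F,f)$ into its global counterpart, producing (after absorbing the $3^j$ and $\gamma^{-2j}$ factors into the overall $\gamma^{2p+1}p^{2p}$) the stated $j$-sum on the right-hand side. Extension from $\omegav\times(0,\boundY/2)$ to $\omegav\times(0,\boundY)$ is immediate since on $(\boundY/2,\boundY)$ the weight $y^\alpha$ is uniformly bounded and standard interior Caccioppoli estimates apply. The main obstacle is the exponent balance: one simultaneously needs $\varepsilon + t > 1/2$ (to get a summable power of $\hR_i$) and $t < 1/2$ (to stay in the range of Lemma~\ref{lem:estH13D}), leaving no margin as $\varepsilon \to 0$, which is precisely what forces $C_\varepsilon$ to degenerate in that limit.
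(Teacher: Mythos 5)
Your proposal is correct and follows essentially the same route as the paper: cover $\omegav$ with interior balls of radius $\sim \rv(x_i)$ via Lemma~\ref{lemma:covering-omega_c}, apply Lemma~\ref{lem:localhighregularity3D-int} on each ball with the weight split $\rv^{p-1/2+\varepsilon} = \rv^{p+\varepsilon/2}\cdot\rv^{-1/2+\varepsilon/2}$ (equivalently, your choice $t = 1/2 - \varepsilon/2$), then sum using finite overlap and close with Lemma~\ref{lem:estH13D} and \eqref{eq:CUFf-simplified}. The only cosmetic difference is that the paper keeps $\rv^{-1/2+\varepsilon/2}$ inside the norm rather than naming $t$ explicitly, which is exactly the same exponent balance you identify as the crux of the argument.
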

\begin{proof} 
The case $p = 0$ follows from Lemma~\ref{lem:estH13D} and the 
estimates \eqref{eq:CUFf}, \eqref{eq:CUFf-simplified}. 

We therefore assume in the remainder of this proof that 
$p \in \N$.
Lemma~\ref{lemma:covering-omega_c} gives
the covering $ \bigcup_i B_i \supset \omegav$
with 
scaled balls $B_i = B_{c \rv(x_i)}(x_i)$ and 
scaled balls $\hB_i = B_{\hc \rv(x_i)}(x_i)$.
We denote $R_i\coloneqq  \hc \dist(x_i,\bfv)$
the radius of the ball $\hB_i$ and note that, for some $C_B > 1$, 
\begin{equation}
\label{eq:estimate-r_c}
\forall i \in \N\quad \forall x \in \hB_i 
\qquad 
C^{-1}_B R_i \leq \rv(x) \leq C_B  R_i. 
\end{equation}
We assume (for convenience) that $R_i \leq 1$ for all $i$. 

For any multi index $\beta$, with $p = \betam$,
\begin{align*}
  \|\rv^{p-1/2+\varepsilon} \dbeta \nabla U\|_{L^2_{\alpha}(\omegav^{\boundY/2})}^2 
  \overset{\text{L.~\ref{lemma:covering-omega_c}}}
  &{\leq}
    \sum_{i\in\N} \|\rv^{p-1/2+\varepsilon} \dbeta \nabla U\|_{L^2_{\alpha}(B_i^{\boundY/2})}^2 
  \\
  \overset{\eqref{eq:estimate-r_c}}
  &{\leq}
    \sum_{i\in\N} (C_BR_i)^{2p+\varepsilon}\|\rv^{-1/2+\varepsilon/2}\dbeta \nabla U\|_{L^2_{\alpha}(B_i^{\boundY/2})}^2 
  \\
  \overset{\text{C.~\ref{lem:localhighregularity3D-int}}}
  &{\lesssim}
    \begin{multlined}[t][0.6\linewidth]
      \sum_{i\in\N} (C_BR_i)^{2p+\varepsilon}(\gamma_1 p)^{2p+1}R_i^{-2p-1}
    \bigg[
    \|\nabla U\|_{L^2_{\alpha}(\hB_i^\boundY)}^2
    + R_i^{s+1}\tNp_{\hB_i^\boundY}(F, f)
    \bigg]
  \end{multlined}
  \\
  & \leq
    C_B^{2p}(\gamma_1p)^{2p+1}\sum_{i\in\N}\bigg[C_B\|\rv^{-1/2+\varepsilon/2} \nabla U\|_{L^2_\alpha(\hB_i^\boundY)}^2 + R_i^{s+\varepsilon}\tNp_{\hB_i^\boundY}(F, f)\bigg]
    \\ & \lesssim 
         C_B^{2p}(\gamma_1p)^{2p+1}\bigg[C_B\|\rv^{-1/2+\varepsilon/2} \nabla U\|_{L^2_\alpha(\omegav^{\hat{\omegaeps}}\times(0, \boundY))}^2 + \tNp_{\Omega^+}(F, f)\bigg].
\end{align*}
We conclude by using that in $\omegav$, $\rv\simeq r_{\partial \Omega}$ and
using Lemma~\ref{lem:estH13D}, Lemma \ref{lem:regularity3D} and \eqref{eq:CUFf-simplified}.
\end{proof}

\subsubsection{Edge-neighborhoods $\omegae$}
%
We have 
\begin{align*}
r_{\mathbf{f}} \sim r_{\mathbf{e}} \qquad \text{ on $\omegae$}. 
\end{align*}
We start with a weighted regularity estimate on arbitrary wedges centered on an edge $\bfe$.

\begin{lemma}[Weighted $H^p$-regularity in a wedge] 
\label{lemma:regularity-W}
Let $\bfe\in\cE$, $x_0\in \bfe$, $R>0$, $\zeta>0$ and let
\begin{equation*}
  W_R = B_R(x_0) \cap \{x\in \Omega: \rhoef(x) > \zeta\,\, \forall \bff\in \cF_{\bfe}\}
\end{equation*}
be a wedge either in $\omega_{\bfe}$ or $\omega_{\bfv\bfe}$. 
Let $c\in(0,1)$ and
let $U$ be the solution of \eqref{eq:minimization}.

Then,
there exists $\gamma > 0$ depending only on $s$, $\Omega$, $\zeta$
and $\boundY$, 
and for every $\varepsilon \in (0,1/2)$, there exists $C_\varepsilon>0$ 
depending additionally on $\varepsilon$
such that for all $\betaperp = (\beta_{\perp,1}, \beta_{\perp,2})\in \N^2_0$ and
all $ \beta_{\parallel}\in \N_0$,
with $\pperp = \beta_{\perp, 1} + \beta_{\perp, 2}$, $\ppar = \beta_\parallel$, 
and $p=\pperp+\ppar$,
it holds that
\begin{multline}
  \label{eq:estimate-omegave}
  \|\re^{\pperp-1/2+\varepsilon} \Dperpe^{\betaperp}\Dpare^{\beta_{\parallel}} \nabla U\|_{L^2_{\alpha}(W_{cR}^{\boundY/4})}^2
   \leq
    C_\varepsilon \gamma^{2p+1}p^{2p} \bigg[R^{-2\ppar-1}\bigg(\|
    \nabla U\|_{L^2_{\alpha}(W_R^{\boundY})}^2
\\    +R^{s+1}\tNppar_{W_R^\boundY}(F, f)\bigg)
    +\tNpperp_{W_R^{\boundY/2}}(\Dpare^{\ppar}F, \Dpare^{\ppar}f) \bigg]
\end{multline}
where $\Dperpe^{\betaperp} = \Doneperpe^{\beta_{\perp,
    1}}\Dtwoperpe^{\beta_{\perp, 2}}$.
\end{lemma}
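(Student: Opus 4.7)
The strategy mirrors that of Lemma~\ref{lemma:regularity-omega_c}, but the mixed scaling $R^{-2\ppar-1}$ (from tangential Caccioppoli) versus $\re^{-2\pperp-1}$ (from perpendicular local shift) forces a two-step decomposition: the tangential derivatives $\Dpare^{\beta_\parallel}$ are handled globally on the wedge via Corollary~\ref{cor:CaccHighBound-e}, whereas the perpendicular derivatives $\Dperpe^{\betaperp}$ are handled by a Besicovitch-type covering of the wedge by interior balls whose radii shrink like the distance to $\bfe$. The reduction to the case $\pperp \geq 1$ is trivial, and the case $\pperp = 0$ reduces to a direct application of Corollary~\ref{cor:CaccHighBound-e} combined with Lemma~\ref{lem:estH13D} (using $r_{\partial\Omega}\sim \re$ on $W_{cR}$, which holds because $\rhoef > \zeta$ on the wedge).

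For the construction of the covering, I would apply Lemma~\ref{lemma:MW} with $M = \bfe\cap \overline{W_R}$ restricted to $W_{cR}$, producing balls $B_i = B_{c'\re(x_i)}(x_i)$ and scaled balls $\hat B_i$ that cover $W_{cR}$ with finite overlap; since $r_{\partial\Omega}\sim\re$ on the wedge, one may choose $c'$ small enough (depending on $\zeta$) so that the enlarged balls $\hat B_i$ are interior to $\Omega$ and contained in $W_{\tilde cR}$ for some $\tilde c \in (c,1)$. On each such interior ball $\hat B_i$, I would apply the interior localized shift theorem (Lemma~\ref{lem:localhighregularity3D-int}) with multi-index of length $\pperp$ to the function $V \coloneqq \Dpare^{\beta_\parallel}U$, noting that tangential differentiation commutes with the divergence-form operator $\div(y^\alpha\nabla\cdot)$, so that locally $V$ solves the extension problem with data $\Dpare^{\beta_\parallel}F$ and $\Dpare^{\beta_\parallel}f$. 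This yields
\begin{equation*}
\|r_{\partial\Omega}^{-t}\Dperpe^{\betaperp}\Dpare^{\beta_\parallel}\nabla U\|^2_{L^2_\alpha(B_i^{\boundY/4})} \lesssim R_i^{-2\pperp-1}(\gamma\pperp)^{2\pperp+1}\Bigl[\|\nabla \Dpare^{\beta_\parallel}U\|^2_{L^2_\alpha(\hat B_i^{\boundY/2})} + R_i^{s+1}\tNpperp_{\hat B_i^{\boundY/2}}(\Dpare^{\beta_\parallel}F,\Dpare^{\beta_\parallel}f)\Bigr],
\end{equation*}
with $R_i \sim \re(x_i)$. Pulling out the factor $R_i^{2\pperp+\varepsilon}$ from the weight $\re^{\pperp-1/2+\varepsilon}$ (using $\re\sim R_i$ on $B_i$), summing over $i$ using the finite overlap of $\{\hat B_i\}$, and then bounding $\|\re^{-1/2+\varepsilon/2}\nabla\Dpare^{\beta_\parallel}U\|^2_{L^2_\alpha(W_{\tilde c R}^{\boundY/2})}$ by Corollary~\ref{cor:CaccHighBound-e}, the desired bound follows; crucially, $\re$ is tangentially constant along $\bfe$, so the weight $\re^{-1/2+\varepsilon/2}$ commutes with the tangential Caccioppoli argument and enters only as a multiplicative factor (yielding the additional $R^{s+1}$ factor in the last term of the claimed estimate through the $L^2$-bound on $\re^{-1/2+\varepsilon/2}\nabla U$ from Lemma~\ref{lem:estH13D}).

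The main obstacle is the commutation in Step~3: justifying that Lemma~\ref{lem:localhighregularity3D-int} applies to $V = \Dpare^{\beta_\parallel}U$, which is not globally a solution of the minimization problem \eqref{eq:minimization} since tangential differentiation need not preserve the zero exterior trace near the endpoints $\partial\bfe$. However, because the enlarged balls $\hat B_i$ lie in $W_{\tilde c R}$, which itself is bounded away from $\partial\bfe$ (as $R_i\sim\re(x_i)$ is comparable to $\dist(x_i,\partial\bfe)$ only if we additionally shrink $c'$ near endpoints), the cutoff argument in the proof of Lemma~\ref{lem:localhighregularity3D-ef} goes through verbatim with $V$ replacing $U$: the cutoff of $V$ lies in $\BLzero$, solves an equation with explicit commutator terms from the cutoff, and the base regularity Lemma~\ref{lem:regularity3D} applies. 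The combinatorial bookkeeping of the factor $\tNpperp_{W_R^{\boundY/2}}(\Dpare^{\ppar}F,\Dpare^{\ppar}f)$, in which a total of $\pperp$ perpendicular derivatives hit the already $\ppar$-tangentially differentiated data, is a routine consequence of the definition \eqref{eq:Ntilde}.
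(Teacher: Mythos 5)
Your covering construction, the reduction to $\pperp\geq 1$, the application of Lemma~\ref{lem:localhighregularity3D-int} to $\Dpare^{\ppar}U$ on the interior balls, and the summation over the Besicovitch cover are all in line with the paper's argument. The gap is in the final step. After summing over the cover with finite overlap, one is left with the term $\|\re^{-1/2+\varepsilon/2}\Dpare^{\ppar}\nabla U\|^2_{L^2_\alpha(W_{\tilde c R}^{\boundY/2})}$, and you propose to bound it by Corollary~\ref{cor:CaccHighBound-e} (the unweighted high-order edge Caccioppoli), invoking the commutativity of $\re$ with $D_{\epar}$. This does not work. While it is true that $D_{\epar}\re=0$, so that $\re^{-1/2+\varepsilon/2}$ commutes with $\Dpare^{\ppar}$ as an operator, Corollary~\ref{cor:CaccHighBound-e} is an estimate on a fixed wedge $W_R$ on which the weight $\re^{-1/2+\varepsilon/2}$ is singular and ranges from $R^{-1/2+\varepsilon/2}$ down to $+\infty$; you cannot extract it as a multiplicative constant. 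Nor does commutativity allow one to write $\re^{-1/2+\varepsilon/2}\Dpare^{\ppar}\nabla U = \Dpare^{\ppar}(\re^{-1/2+\varepsilon/2}\nabla U)$ and apply Caccioppoli to $\re^{-1/2+\varepsilon/2}U$, because this weighted function does not solve the extension equation. Controlling such singular-weighted norms of high-order derivatives fundamentally requires the base regularity shift of Lemma~\ref{lem:regularity3D} (Savar\'e's technique), which is exactly what the edge-localized high-order shift theorem (Lemma~\ref{lem:localhighregularity3D-ef}) delivers: it combines the $\Dpare$-Caccioppoli estimate with the shift theorem to bound $\|r_{\partial\Omega}^{-t}\Dpare^{\ppar}\nabla U\|^2_{L^2_\alpha(\cdot)}$ by $R^{-2\ppar-1}(\gamma\ppar)^{2\ppar}(1+\gamma\ppar)(\|\nabla U\|^2_{L^2_\alpha(W_R^\boundY)}+R^{s+1}\tNppar(F,f))$, and this is the tool used in the paper for the last step.

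For the same reason, your treatment of the base case $\pperp=0$ is also incomplete: the claimed estimate is $\|\re^{-1/2+\varepsilon}\Dpare^{\ppar}\nabla U\|^2_{L^2_\alpha(W_{cR}^{\boundY/4})}\lesssim R^{-2\ppar-1}(\cdots)$, which is again a bound on a singular-weighted norm of a high-order derivative; Corollary~\ref{cor:CaccHighBound-e} alone gives the unweighted version, and Lemma~\ref{lem:estH13D} only controls the first derivative $\|r_{\partial\Omega}^{-t}\nabla U\|$. Neither, nor their combination, yields the weighted estimate for $\Dpare^{\ppar}\nabla U$. This case also follows directly from Lemma~\ref{lem:localhighregularity3D-ef} (with $\bfs=\bfe$ and $t=1/2-\varepsilon<1/2$), using $\re\simeq r_{\partial\Omega}$ on $W_{cR}$.
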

\begin{proof} 
The case $\pperp = 0$ follows from Lemma~\ref{lem:localhighregularity3D-ef} 
and from the estimates \eqref{eq:CUFf}, \eqref{eq:CUFf-simplified}. 

We therefore assume in the following that
$\pperp \in \N$.
Denote $\tc = (c+1)/2\in(c, 1)$. 

We observe that the argument of 
Lemma~\ref{lemma:covering-omega_c} also gives
a covering $ \bigcup_i B_i \supset W_{cR}$
with balls $B_i = B_{c_1 \re(x_i)}(x_i)$ and scaled 
balls $\hB_i = B_{\hc_1 \re(x_i)}(x_i)$ such that 
$\bigcup_i \hB_i \subset W_{\tc R}$, provided 
one chooses the parameters $c_1,\hc_1 > 1$ small enough.

We denote $R_i\coloneqq  \hc_1 \dist(x_i,\bfe)$
the radius of the ball $\hB_i$ and note that, for some $C_B > 1$, 
\begin{equation}
\label{eq:estimate-r_e}
\forall i \in \N\quad \forall x \in \hB_i 
\qquad 
C^{-1}_B R_i \leq \re(x) \simeq r_{\partial\Omega}(x) \leq C_B  R_i. 
\end{equation}
We assume (for convenience) that $R_i \leq 1$ for all $i$. 

We apply Lemma~\ref{lem:localhighregularity3D-int} to the function 
$\Dpare^{\ppar} U$ (noting that this function satisfies \eqref{eq:extension3D} with data $\Dpare^{\ppar}f$, $\Dpare^{\ppar}F$)
with the pair ($B_i$, $\hB_i$) of
concentric balls, with $\boundY/2$ instead of $\boundY$,
and with constant denoted $\gamma_1\geq 1$.
For any 
$\betaperp = (\beta_{\perp,1}, 
\beta_{\perp,2})\in \N^2_0$ and
$ \beta_{\parallel}\in \N_0$, 
with 
$\pperp = |\betaperp|\in\N$ and $\ppar = \beta_\parallel$, 
it holds that
\begin{multline*}
  \|\re^{\pperp-1/2+\varepsilon} \Dperpe^{\betaperp}\Dpare^{\ppar} \nabla U\|_{L^2_{\alpha}(W_{cR}^{\boundY/4})}^2
  \\
\begin{aligned}
  \overset{\text{L.~\ref{lemma:covering-omega_c}}}
  &{\leq}
    \sum_{i\in\N} \|\re^{\pperp-1/2+\varepsilon} \Dperpe^{\betaperp}\Dpare^{\ppar} \nabla U\|_{L^2_{\alpha}(B_i^{\boundY/4})}^2
  \\
  \overset{\eqref{eq:estimate-r_e}}
  &{\leq}
    \sum_{i\in\N} (C_BR_i)^{2\pperp+\varepsilon}\| \re^{-1/2+\varepsilon/2}\Dperpe^{\betaperp}\Dpare^{\ppar}\nabla U\|_{L^2_{\alpha}(B_i^{\boundY/4})}^2 
  \\
  \overset{\text{L.~\ref{lem:localhighregularity3D-int}}}
  &{\leq}
    \begin{multlined}[t][0.6\linewidth]
    \sum_{i\in\N} (C_BR_i)^{2\pperp+\varepsilon}
    (\gamma_1 {\pperp})^{2\pperp+1}R_i^{-2\pperp-1}\bigg[
    \|\Dpare^{\ppar}\nabla U\|_{L^2_{\alpha}(\hB_i^{\boundY/2})}^2 +
    R_i^{s+1}\tNpperp_{\hB_i^{\boundY/2}}(\Dpare^{\ppar}F, \Dpare^{\ppar}f) \bigg]
  \end{multlined}
  \\
  \overset{\eqref{eq:estimate-r_e}}
  &{\lesssim}
    C_B^{{2\pperp+1}}(\gamma_1 {\pperp})^{2\pperp+1}\sum_{i\in\N}\bigg[\|
    \re^{-1/2+\varepsilon/2}\Dpare^{\ppar}\nabla
    U\|_{L^2_{\alpha}(\hB_i^{\boundY/2})}^2
    +R^{s+\varepsilon}\tNpperp_{\hB_i^{\boundY/2}}(\Dpare^{\ppar}F, \Dpare^{\ppar}f) \bigg]
  \\
  &\lesssim
    C_B^{{2\pperp+1}}(\gamma_1 {\pperp})^{2\pperp+1}\bigg[\|
    \re^{-1/2+\varepsilon/2}\Dpare^{\ppar}\nabla
    U\|_{L^2_{\alpha}(W_{\tc R}^{\boundY/2})}^2
    +\tNpperp_{W_{\tc R}^{\boundY/2}}(\Dpare^{\ppar}F, \Dpare^{\ppar}f) \bigg]
  \\
  \overset{\text{L.~\ref{lem:localhighregularity3D-ef}}}
  &{\leq}
    \begin{multlined}[t][\arraycolsep]
    C_B^{{2\pperp+1}}(\gamma_1 {\pperp})^{2\pperp+1}(\gamma_2 {\ppar})^{2\ppar+1}\bigg[R^{-2\ppar-1}\bigg(\|
    \nabla U\|_{L^2_{\alpha}(W_R^{\boundY})}^2
    +R^{s+1}\tNppar_{W_R^\boundY}(F, f)\bigg)
\\    +\tNpperp_{W_R^{\boundY/2}}(\Dpare^{\ppar}F, \Dpare^{\ppar}f) \bigg],
  \end{multlined}
\end{aligned}
\end{multline*}
where we have used Lemma~\ref{lem:localhighregularity3D-ef} in the last step. 
\end{proof}
\begin{corollary}
Let $\bfe\in\cE$ and $\boundY>0$. 
Let $U$ be the solution of \eqref{eq:minimization}.

Then,
there exists $\gamma > 0$ depending only on $s$, $\Omega$, $\zeta$ and $\boundY$, 
and, for every $\varepsilon \in (0,1/2)$, there exists $C_\varepsilon>0$ 
depending additionally on $\varepsilon$
such that for all $\betaperp = (\beta_{\perp,1}, \beta_{\perp,2})\in \N^2_0$ and
all $ \beta_{\parallel}\in \N_0$, 
with $\pperp = \beta_{\perp, 1} + \beta_{\perp, 2}$, $\ppar = \beta_\parallel$, and $p=\pperp+\ppar$, 
it holds that
  \begin{equation}
      \|\re^{\pperp-1/2+\varepsilon} \Dperpe^{\betaperp}\Dpare^{\beta_{\parallel}} \nabla U\|_{L^2_{\alpha}(\omegae^{\boundY/4})}^2
      \leq
        C_\varepsilon \gamma^{2p+1}p^{2p}   \tNp_{\Omega^\boundY}(F, f).
  \end{equation}
\end{corollary}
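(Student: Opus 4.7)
The plan is a standard covering argument based on Lemma~\ref{lemma:regularity-W}. Since $\omegae$ is bounded away from both vertices of $\bfe$ by the condition $\rv(x) \geq \omegaeps$ and has $\omegaeps$-bounded extent along $\bfe$, it can be covered by a \emph{finite} collection of wedges $\{W^{(k)}\}$ of the form required in Lemma~\ref{lemma:regularity-W}, each with ambient radius $R_k \simeq \omegaeps$ (bounded from above and below by absolute constants), and whose enlargements $W_{R_k}^{(k)} \subset \Omega$ have uniformly bounded overlap. The finite-overlap property is inherited from the geometry of a tubular $\omegaeps^2$-neighborhood of the straight edge $\bfe$ intersected with $\Omega$.

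Applying Lemma~\ref{lemma:regularity-W} to each wedge $W^{(k)}$ and summing over $k$, the factors $R_k^{-2\ppar-1}$ and $R_k^{s+1}$ collapse to absolute constants that may be absorbed into $C_\varepsilon$. The resulting global energy term $\sum_k \|\nabla U\|_{L^2_\alpha(W_{R_k}^{(k),\boundY})}^2$ is, by the finite overlap, bounded by a constant multiple of $\|\nabla U\|_{L^2_\alpha(\R^3\times\Rpos)}^2$, and hence, through the \emph{a priori} bound~\eqref{eq:energy-estimate} and the definition~\eqref{eq:Ntilde} of $\tNp_{\Omega^\boundY}(F,f)$, controlled by $\tNp_{\Omega^\boundY}(F,f)$ up to a constant.

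It remains to dominate the two quantities $\tNppar_{W_{R_k}^\boundY}(F,f)$ and $\tNpperp_{W_{R_k}^{\boundY/2}}(\Dpare^\ppar F, \Dpare^\ppar f)$ by a constant multiple of $p^{2p}\gamma^{-2p}\tNp_{\Omega^\boundY}(F,f)$, so that after multiplication by the prefactor $\gamma^{2p+1}p^{2p}$ the desired conclusion emerges. Unpacking~\eqref{eq:Ntilde}, each derivative of $f$ or $F$ appearing in either quantity has the form $\dbeta \Dpare^{\ppar}(\cdot)$ with $|\beta|\le \pperp+1$, hence has total order at most $p+1$; it is therefore already controlled by the corresponding maxima in $\tNp_{\Omega^\boundY}(F,f)$. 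The mismatch between the combinatorial factors $(\gamma\pperp)^{-2j}$, $(\gamma\ppar)^{-2j}$ and $(\gamma p)^{-2j}$ is resolved by enlarging $\gamma$ and absorbing ratios $(p/\pperp)^{2j}$ or $(p/\ppar)^{2j} \leq p^{2j}$ into the already-present prefactor $p^{2p}$, using the bound $j\le p+1$.

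The main obstacle is this last algebraic bookkeeping: tracking simultaneously the shift $\Dpare^\ppar$ inside the arguments of $\tN^{(\pperp)}$, the upper index of summation, and the polynomial factors $(\gamma p)^{2p}$ versus $(\gamma\ppar)^{2\ppar}(\gamma\pperp)^{2\pperp}$. In contrast, the geometric covering and the absorption of $R_k$-powers are immediate once $\omegae$ is recognized as being of uniform size in all directions relevant to the singular set.
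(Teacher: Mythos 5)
Your overall strategy coincides with the paper's: apply Lemma~\ref{lemma:regularity-W} to (finitely many) wedges covering $\omegae$, and use the a priori estimate \eqref{eq:CUFf-simplified} for the zeroth-order energy term. The paper simply observes that since $\omegae$ is bounded away from the vertices of $\bfe$, a single application of Lemma~\ref{lemma:regularity-W} with $R\simeq 1$ (i.e.\ $R$ bounded above and below independently of $p$) suffices. Two points in your write-up, however, need correction.

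First, you cover $\omegae$ by wedges of radius $R_k\simeq\omegaeps$ and then claim that the factors $R_k^{-2\ppar-1}$ ``collapse to absolute constants that may be absorbed into $C_\varepsilon$''. That is not correct: for $\omegaeps < 1$ fixed, $R_k^{-2\ppar-1}\simeq\omegaeps^{-2\ppar-1}$ grows exponentially in $\ppar$ (hence in $p$); it cannot be absorbed into the $p$-independent constant $C_\varepsilon$. It \emph{can} be absorbed into the geometric factor $\gamma^{2p+1}$ after enlarging $\gamma$, which is the intended mechanism. Alternatively, one can take $R_k\simeq 1$, as the paper does, which removes this dependence from the start.

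Second, your target comparison for $\tNppar_{W_{R_k}^\boundY}(F,f)$ and $\tNpperp_{W_{R_k}^{\boundY/2}}(\Dpare^\ppar F,\Dpare^\ppar f)$ is set at ``a constant multiple of $p^{2p}\gamma^{-2p}\tNp_{\Omega^\boundY}(F,f)$.'' This does not yield the stated conclusion: multiplying by the prefactor $\gamma^{2p+1}p^{2p}$ gives $\gamma\, p^{4p}\,\tNp_{\Omega^\boundY}(F,f)$, which is \emph{larger} than the target $\gamma^{2p+1}p^{2p}\tNp_{\Omega^\boundY}(F,f)$ by a factor of $p^{2p}/\gamma^{2p}$ — a factor that cannot be removed since $\gamma$ is fixed and $p$ is arbitrary. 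The correct target is to bound both quantities by $C^{2p}\tNp_{\Omega^\boundY}(F,f)$ for some fixed $C$ (which then increases $\gamma$ but leaves the $p^{2p}$ power intact). Your idea of tracking the shift of derivative orders ($j\mapsto j+\ppar$, $|\beta|\le p+1$) is the right starting point, but the ratio of coefficients such as $(\gamma\pperp)^{-2j}$ against $(\gamma p)^{-2(j+\ppar)}$ must be estimated via inequalities of the form $(p/k)^k\le e^{p/e}$, not by a crude $(p/\pperp)^{2j}\le p^{2j}$ followed by ``absorption into $p^{2p}$'' — the latter destroys the desired $p^{2p}$ exponent rather than preserving it.
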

\begin{proof}
  This follows directly from Lemma~\ref{lemma:regularity-W} with $R\simeq 1$ and from \eqref{eq:CUFf-simplified}.
\end{proof}
\subsubsection{Vertex-edge neighborhoods $\omegave$}
We have 
\begin{align*}
r_{\mathbf{f}} \sim r_{\mathbf{e}} \qquad \text{ and } \qquad r_{\mathbf{e}} \leq r_{\mathbf{v}} \qquad \text{ on $\omegave$}. 
\end{align*}
\begin{lemma}[Weighted $H^p$-regularity in $\omegave$] 
\label{lemma:regularity-omegave}
Let $U$ be the solution of \eqref{eq:minimization}.
There is $\gamma > 0$ depending only on $s$, $\Omega$, and $\boundY$, 
and for every $\varepsilon \in (0,1/2)$, there exists $C_\varepsilon>0$ 
depending additionally on $\varepsilon$
such that for all 
$\betaperp = (\beta_{\perp,1}, \beta_{\perp,2})\in \N^2_0$ and
$ \beta_{\parallel}\in \N_0$,
with $\pperp = \beta_{\perp, 1} + \beta_{\perp, 2}$, $\ppar = \beta_\parallel$, and $p=\pperp+\ppar$, 
it holds that
\begin{equation}
  \label{eq:estimate-W}
\begin{aligned}
  \|\rv^{\ppar+\varepsilon}\re^{\pperp-1/2+\varepsilon} \Dperpe^{\betaperp}\Dpare^{\beta_{\parallel}} \nabla U\|_{L^2_{\alpha}(\omegave^{\boundY/4})}^2
  &\leq
C_\varepsilon \gamma^{2p+1}p^{2p}  \tNp_{\Omega^\boundY}(F, f),
\end{aligned}
\end{equation}
where $\Dperpe^{\betaperp} = \Doneperpe^{\beta_{\perp,
    1}}\Dtwoperpe^{\beta_{\perp, 2}}$.
\end{lemma}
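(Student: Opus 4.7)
The plan is to mimic the proof of Lemma~\ref{lemma:regularity-omega_c}, but with the outer covering of $\omegave$ taken by wedges rather than balls, and with Lemma~\ref{lemma:regularity-W} (the already-proved wedge estimate) playing inside each wedge the role that Lemma~\ref{lem:localhighregularity3D-int} played in Lemma~\ref{lemma:regularity-omega_c}. First, I would apply Lemma~\ref{lemma:covering-omega_ve} to obtain a countable family of wedges $W_i\subset B_{R_i}(x_i)\cap\Omega$ with $x_i\in\bfe$ and $R_i=c\,\dist(x_i,\bfv)$, together with concentric enlarged wedges $\widehat W_i\subset B_{\widehat R_i}(x_i)\cap\Omega$ that cover $\omegave$ with finite overlap. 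The key geometric fact to be exploited is that $\rv(x)\sim R_i$ uniformly on each $\widehat W_i$, so the weight $\rv^{\ppar+\varepsilon}$ can be pulled out of every local integral as a scalar $R_i^{\ppar+\varepsilon}$.

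Second, I would bound the left-hand side by $\sum_i \|\rv^{\ppar+\varepsilon}\re^{\pperp-1/2+\varepsilon}\Dperpe^{\betaperp}\Dpare^{\ppar}\nabla U\|_{L^2_\alpha(W_i^{\boundY/4})}^2$, extract $R_i^{2(\ppar+\varepsilon)}$, and apply Lemma~\ref{lemma:regularity-W} on $\widehat W_i$ (with ``$R$'' there equal to $\widehat R_i$ and ``$cR$'' equal to $R_i$). After this the per-wedge bound contains, up to the common prefactor $C_\varepsilon\gamma^{2p+1}p^{2p}$, three contributions: a term $R_i^{2\varepsilon-1}\|\nabla U\|_{L^2_\alpha(\widehat W_i^\boundY)}^2$, a term $R_i^{s+2\varepsilon}\tNppar_{\widehat W_i^\boundY}(F,f)$, and a term $R_i^{2\ppar+2\varepsilon}\tNpperp_{\widehat W_i^{\boundY/2}}(\Dpare^{\ppar}F,\Dpare^{\ppar}f)$.

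Third, I would sum these over $i$ using the finite-overlap property of $\{\widehat W_i\}$. For the first contribution, I would use $\rv\geq\re\sim r_{\partial\Omega}$ on $\omegave$ together with the negative exponent $2\varepsilon-1$ to majorize $R_i^{2\varepsilon-1}\sim\rv^{2\varepsilon-1}\leq r_{\partial\Omega}^{2\varepsilon-1}$, so that the sum is controlled by $\|r_{\partial\Omega}^{-1/2+\varepsilon}\nabla U\|_{L^2_\alpha(\omegave^\boundY)}^2$; Lemma~\ref{lem:estH13D} with $t=1/2-\varepsilon$ combined with the simplified energy estimate~\eqref{eq:CUFf-simplified} then absorbs this into the data-driven right-hand side contained in $\tNp_{\Omega^\boundY}(F,f)$. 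The second contribution is of $\tNp$-type once $R_i$ is bounded by a constant, so the finite overlap converts the local $\tNppar$ into the global $\tNp_{\Omega^\boundY}(F,f)$. The third contribution requires relabeling the summation index $j\mapsto j+\ppar$ in the defining sum~\eqref{eq:Ntilde} of $\tNpperp$ applied to $\Dpare^{\ppar}F,\Dpare^{\ppar}f$, so that the resulting integrands are again derivatives of $F$ and $f$ of orders up to $p$ and $p+1$.

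The main technical obstacle is precisely this third contribution: the extracted factor $R_i^{2\ppar}$ (from the vertex weight) must compensate for the $\ppar$ ``missing'' derivatives that are hidden inside $\Dpare^{\ppar}F$ and $\Dpare^{\ppar}f$ when comparing with the derivatives of $F,f$ appearing in $\tNp$, and simultaneously the weights $(\gamma\pperp)^{-2j}$ inside $\tNpperp$ have to be reconciled with the weights $(\gamma p)^{-2(j+\ppar)}$ inside $\tNp$. Since $R_i$ is bounded and $\pperp\leq p$, the resulting discrepancy, together with the $3^{j-\ppar}$ multinomial losses from expressing $\Dpare^{\ppar}$ as a combination of partial derivatives, can be absorbed into the universal prefactor $\gamma^{2p+1}p^{2p}$ by redefining $\gamma$ if necessary. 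The degenerate case $p=0$ reduces directly to Lemma~\ref{lem:estH13D}.
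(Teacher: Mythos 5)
Your proposal is correct and follows essentially the same route as the paper's proof: cover $\omegave$ with the wedges of Lemma~\ref{lemma:covering-omega_ve}, pull the vertex weight out of each local integral using $\rv\sim R_i$ on $\hW_i$, invoke Lemma~\ref{lemma:regularity-W} on each wedge, sum via the finite-overlap property, and close the gradient term with Lemma~\ref{lem:estH13D} together with \eqref{eq:CUFf-simplified} and the bound $\rv\geq r_{\partial\Omega}$. The paper's treatment of the final ``accounting'' step is even terser than yours — it simply asserts that the three local contributions combine to $\tNp_{\Omega^\boundY}(F,f)$ after summation — whereas you correctly flag the index shift $j\mapsto j+\ppar$ and the $3^{\ppar}$ multinomial loss needed to reconcile $\tNpperp(\Dpare^{\ppar}F,\Dpare^{\ppar}f)$ with $\tNp(F,f)$ as the part that actually requires enlarging $\gamma$; that observation is sound and makes your outline slightly more complete than the paper's at that point.
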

\begin{proof}
We use the covering of wedges $W_i \subset B_{cR_i}(x_i)$  
with $\hW_i \subset B_{R_i}(x_i)$ given by Lemma~\ref{lemma:covering-omega_ve}. 
We have, for a constant $C_W > 1$, 
  \begin{equation*}
      \forall i \in \N\quad \forall x \in \hW_i 
      \qquad 
      C^{-1}_W R_i \leq \rv(x)  \leq C_W  R_i. 
  \end{equation*}
  Using this and Lemma~\ref{lemma:regularity-W},
  \begin{multline*}
    \|\rv^{\ppar+\varepsilon}\re^{\pperp-1/2+\varepsilon} \Dperpe^{\betaperp}\Dpare^{\beta_{\parallel}} \nabla U\|_{L^2_{\alpha}(\omegave^{\boundY/4})}^2 
    \\
    \begin{aligned}[t]
      &\leq \sum_{i\in \N} (C_WR_i)^{2\ppar+2\varepsilon} \|\re^{\pperp-1/2+\varepsilon} \Dperpe^{\betaperp}\Dpare^{\beta_{\parallel}} \nabla U\|_{L^2_{\alpha}(W_i^{\boundY/4})}^2
       \\
      &
        \begin{multlined}[t][\arraycolsep]
        \leq \sum_{i\in \N} (C_WR_i)^{2\ppar+2\varepsilon}  \gamma_1^{2p+1}p^{2p} \bigg[R_i^{-2\ppar }
        \bigg(R_i^{-1}\|\nabla U\|_{L^2_\alpha(\hW_i^\boundY)}^2 +
              R_i^{s} \tNppar_{\hW_i^\boundY}(F, f)\bigg)  
        \\ 
        + \tNpperp_{\hW_i^{\boundY/2}}(\Dpare^{\ppar}F, \Dpare^{\ppar}f)\bigg]
      \end{multlined}
        \\
      &\leq  \gamma^{2p+1}p^{2p}\sum_{i\in \N} \bigg[ \|\rv^{-1/2+\varepsilon}\nabla U\|_{L^2_\alpha(\hW^\boundY_i)}^2 +
        \tNppar_{\hW_i^\boundY}(F, f) + \tNpperp_{\hW_i^{\boundY/2}}(\Dpare^{\ppar}F, \Dpare^{\ppar}f)\bigg].
    \end{aligned}
  \end{multline*}
  The bound $ \rv(x)\geq r_{\partial \Omega}(x)$, 
  the finite overlap of the wedges $\hW_i$, 
  Lemma \ref{lem:estH13D}, and \eqref{eq:CUFf-simplified} conclude the proof.
\end{proof}

\subsubsection{Face neighborhoods $\omegaf$}
\label{sec:omegaf}
We write $H_R^\boundY \coloneqq H_R \times (0, \boundY)$ 
and start with a weighted regularity estimate on arbitrary half-balls centered on a face $\bff$.
\begin{lemma}[Weighted $H^p$-regularity in a half-ball] 
\label{lemma:regularity-halfball}
Let $\bff\in\cF$, $x_0\in \bff$, $R>0$, $\zeta>0$ and let
\begin{equation*}
  H_R = B_R(x_0) \cap \Omega
\end{equation*}
be a half-ball. Let $c\in(0,1)$ and
let $U$ be the solution of \eqref{eq:minimization}.
There is $\gamma > 0$ depending only on $s$, $\Omega$, $\zeta$
and $\boundY$, 
and for every $\varepsilon \in (0,1/2)$, there exists $C_\varepsilon>0$ 
depending additionally on $\varepsilon$
such that for all $\betapar = (\beta_{\parallel,1}, \beta_{\parallel,2})\in \N^2_0$ and
$ \betaperp\in \N_0$,
with $\ppar = \beta_{\parallel, 1} + \beta_{\parallel, 2}$, $\pperp = \betaperp$, 
and $p=\ppar+\pperp$,
it holds that
\begin{multline}
  \label{eq:estimate-halfball}
  \|\rf^{\pperp-1/2+\varepsilon} \Dperpf^{\betaperp}\Dparf^{\beta_{\parallel}} \nabla U\|_{L^2_{\alpha}(H_{cR}^{\boundY/4})}^2
   \leq
    C_\varepsilon \gamma^{2p+1}p^{2p} \bigg[R^{-2\ppar-1}\bigg(\|
    \nabla U\|_{L^2_{\alpha}(H_R^{\boundY})}^2
  \\  +R^{s+1}\tNppar_{W_R^\boundY}(F, f)\bigg)
    +\tNpperp_{H_R^{\boundY/2}}(\Dparf^{\ppar}F, \Dparf^{\ppar}f) \bigg]
\end{multline}
where $\Dparf^{\betapar} = \Doneparf^{\beta_{\parallel, 1}}\Dtwoparf^{\beta_{\parallel, 2}}$.
\end{lemma}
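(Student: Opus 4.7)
The plan is to mirror the proof of Lemma~\ref{lemma:regularity-W} word-for-word, replacing the edge $\bfe$ by the face $\bff$, the wedge $W_R$ by the half-ball $H_R$, the weight $\re$ by $\rf$, and exchanging the tangential/normal roles from $(\Dpare,\Dperpe)$ to $(\Dparf,\Dperpf)$. The base case $\pperp = 0$ carries only tangential-to-face derivatives $\Dparf^{\betapar}\nabla U$, and I will dispose of it by a single application of Lemma~\ref{lem:localhighregularity3D-ef} with $\bfs = \bff$ on the pair $(H_{cR}, H_R)$, combined with~\eqref{eq:CUFf-simplified} to see that $\|\nabla U\|^2_{L^2_\alpha(H_R^\boundY)}$ is absorbed into the right-hand side of~\eqref{eq:estimate-halfball}.

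For $\pperp \geq 1$ I will set $\tc = (c+1)/2 \in (c,1)$ and, following the Besicovitch argument underlying Lemma~\ref{lemma:covering-omega_c}, produce constants $c_1,\hc_1 \in (0,1)$ and points $(x_i)_{i\in\N} \subset H_{cR}$ such that the balls $B_i \coloneqq B_{c_1 \rf(x_i)}(x_i)$ cover $H_{cR}$, while the scaled balls $\hB_i \coloneqq B_{\hc_1 \rf(x_i)}(x_i)$ satisfy a finite overlap property and sit inside $H_{\tc R}$. Because $x_0$ lies on the open face $\bff$ at positive distance from $\cV \cup \cE$, the scaled balls $\hB_i$ are genuine interior balls in $\Omega$ once $\hc_1$ is chosen small enough, and on each one $\rf(x) \simeq r_{\partial\Omega}(x) \simeq R_i$ with $R_i \coloneqq \hc_1 \rf(x_i)$.

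Next, I will apply Lemma~\ref{lem:localhighregularity3D-int} on each concentric pair $(B_i, \hB_i)$ (with $\boundY/2$ in place of $\boundY$) to the function $\Dparf^{\ppar}U$, which still solves~\eqref{eq:extension3D} with data $\Dparf^{\ppar}f$ and $\Dparf^{\ppar}F$. This controls the $\pperp$ normal-to-face derivatives by $\|\Dparf^{\ppar}\nabla U\|^2_{L^2_\alpha(\hB_i^{\boundY/2})}$ plus $R_i^{s+1}\tNpperp_{\hB_i^{\boundY/2}}(\Dparf^{\ppar}F, \Dparf^{\ppar}f)$, with leading factor $(\gamma_1 \pperp)^{2\pperp+1}R_i^{-2\pperp-1}$. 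Multiplying by $\rf^{2\pperp-1+2\varepsilon} \simeq R_i^{2\pperp-1+2\varepsilon}$, summing over $i$ and using the finite overlap of $\hcB$, the scaling $R_i^{-2\pperp-1}$ is absorbed into the weight, leaving the left-hand side of~\eqref{eq:estimate-halfball} bounded in terms of $\|\rf^{-1/2+\varepsilon/2}\Dparf^{\ppar}\nabla U\|^2_{L^2_\alpha(H_{\tc R}^{\boundY/2})}$ and $\tNpperp_{H_{\tc R}^{\boundY/2}}(\Dparf^{\ppar}F, \Dparf^{\ppar}f)$. A final use of Lemma~\ref{lem:localhighregularity3D-ef} with $\bfs = \bff$ on the pair $(H_{\tc R}, H_R)$, applied to a multi-index of order $\ppar$, converts the first summand into the right-hand side of~\eqref{eq:estimate-halfball}; merging the two constants $\gamma_1,\gamma_2$ into a single $\gamma$ closes the argument.

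I do not anticipate any substantive obstacle: the only thing worth checking is that each $\hB_i$ really is a genuine interior ball contained in $H_{\tc R}$, but this follows automatically for small enough $c_1 < \hc_1$ since $x_0 \in \bff$ is bounded away from $\cV \cup \cE$, so that $\rf \equiv r_{\partial\Omega}$ on $H_R$. The remaining steps are bookkeeping structurally identical to the wedge case treated in Lemma~\ref{lemma:regularity-W}.
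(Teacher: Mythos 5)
Your proposal matches the paper's proof essentially line for line: the same case split on $\pperp$, the same choice $\tc = (c+1)/2$, the same Besicovitch covering of $H_{cR}$ by interior balls $B_i = B_{c_1 \rf(x_i)}(x_i)$ with enlarged balls $\hB_i \subset H_{\tc R}$, the same application of Lemma~\ref{lem:localhighregularity3D-int} to $\Dparf^{\betapar}U$ on each pair $(B_i, \hB_i)$, followed by summation with finite overlap and a final application of Lemma~\ref{lem:localhighregularity3D-ef} with $\bfs = \bff$ on $(H_{\tc R}, H_R)$. Your sanity check that $\rf \equiv r_{\partial\Omega}$ on $H_R$, so the $\hB_i$ are genuine interior balls, is exactly the observation the paper relies on (cf.\ equation~\eqref{eq:estimate-r_f}), and the bookkeeping with $(\gamma_1\pperp)^{2\pperp+1}R_i^{-2\pperp-1}$ versus the weight $\rf^{2\pperp-1+2\varepsilon}\simeq R_i^{2\pperp-1+2\varepsilon}$ is likewise identical.
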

\begin{proof} 
The case $\pperp = 0$ follows from Lemma~\ref{lem:localhighregularity3D-ef} and the estimates \eqref{eq:CUFf}, \eqref{eq:CUFf-simplified}. 
We therefore assume $\pperp \in \N$.

Denote $\tc = (c+1)/2\in(c, 1)$.
The arguments of Lemma~\ref{lemma:covering-omega_c} give
a covering $ \bigcup_i B_i \supset H_{cR}$
with balls $B_i = B_{c_1 \rf(x_i)}(x_i)$ and 
scaled balls $\hB_i = B_{\hc_1 \rf(x_i)}(x_i)$ 
such that $\bigcup_i \hB_i \subset H_{\tc R}$, 
if one chooses the parameters $c_1,\hc_1 > 1$ small enough.

We denote $R_i\coloneqq  \hc_1 \dist(x_i,\bff)$ the radius of the ball $\hB_i$ and note that, for some $C_B > 1$, 
\begin{equation}
\label{eq:estimate-r_f}
\forall i \in \N\quad \forall x \in \hB_i 
\qquad 
C^{-1}_B R_i \leq \rf(x) = r_{\partial\Omega}(x) \leq C_B  R_i. 
\end{equation}
We assume (for convenience) that $R_i \leq 1$ for all $i$. 

We apply Lemma~\ref{lem:localhighregularity3D-int} to the function 
$\Dparf^{\betapar} U$ (noting that this function satisfies \eqref{eq:extension3D} with data $\Dparf^{\betapar}f$, $\Dparf^{\betapar}F$)
with the pair ($B_i$, $\hB_i$) of
concentric balls, with $\boundY/2$ instead of $\boundY$,
and with constant denoted $\gamma_1\geq 1$.
For any 
$\betapar = (\beta_{\parallel,1}, \beta_{\parallel,2})\in \N^2_0$ and
$ \betaperp\in \N_0$, with $\ppar = |\betapar|\in\N$ and $\pperp = \betaperp$, 
it holds that
\begin{multline*}
  \|\rf^{\pperp-1/2+\varepsilon} \Dperpf^{\betaperp}\Dparf^{\ppar} \nabla U\|_{L^2_{\alpha}(H_{cR}^{\boundY/4})}^2
  \\
\begin{aligned}
  \overset{\text{L.~\ref{lemma:covering-omega_c}}}
  &{\leq}
    \sum_{i\in\N} \|\rf^{\pperp-1/2+\varepsilon} \Dperpf^{\betaperp}\Dparf^{\betapar} \nabla U\|_{L^2_{\alpha}(B_i^{\boundY/4})}^2
  \\
  \overset{\eqref{eq:estimate-r_f}}
  &{\leq}
    \sum_{i\in\N} (C_BR_i)^{2\pperp+\varepsilon}\| \rf^{-1/2+\varepsilon/2}\Dperpf^{\betaperp}\Dparf^{\betapar}\nabla U\|_{L^2_{\alpha}(B_i^{\boundY/4})}^2 
  \\
  \overset{\text{L.~\ref{lem:localhighregularity3D-int}}}
  &{\leq}
    \begin{multlined}[t][\arraycolsep]
    \sum_{i\in\N} (C_BR_i)^{2\pperp+\varepsilon}
    (\gamma_1 {\pperp})^{2\pperp+1}R_i^{-2\pperp-1}\bigg[
    \|\Dparf^{\betapar}\nabla U\|_{L^2_{\alpha}(\hB_i^{\boundY/2})}^2 \\ +
    R_i^{s+1}\tNpperp_{\hB_i^{\boundY/2}}(\Dparf^{\betapar}F, \Dparf^{\betapar}f) \bigg]
  \end{multlined}
  \\
  \overset{\eqref{eq:estimate-r_e}}
  &{\lesssim}
    C_B^{2\pperp}(\gamma_1 {\pperp})^{2\pperp+1}\sum_{i\in\N}\bigg[\|
    \rf^{-1/2+\varepsilon/2}\Dparf^{\betapar}\nabla
    U\|_{L^2_{\alpha}(\hB_i^{\boundY/2})}^2
    +R^{s+\varepsilon}\tNpperp_{\hB_i^{\boundY/2}}(\Dparf^{\betapar}F, \Dparf^{\betapar}f) \bigg]
  \\
  &\lesssim
    C_B^{2\pperp}(\gamma_1 {\pperp})^{2\pperp+1}\bigg[\|
    \rf^{-1/2+\varepsilon/2}\Dparf^{\betapar}\nabla
    U\|_{L^2_{\alpha}(H_{\tc R}^{\boundY/2})}^2
    +\tNpperp_{H_{\tc R}^{\boundY/2}}(\Dparf^{\betapar}F, \Dparf^{\betapar}f) \bigg]
  \\
  \overset{\text{L.~\ref{lem:localhighregularity3D-ef}}}
  &{\leq}
    \begin{multlined}[t][\arraycolsep]
    C_B^{2\pperp}(\gamma_1 {\pperp})^{2\pperp+1}(\gamma_2 {\ppar})^{2\ppar+1}\bigg[R^{-2\ppar-1}\bigg(\|
    \nabla U\|_{L^2_{\alpha}(H_R^{\boundY})}^2
    +R^{s+1}\tNppar_{H_R^\boundY}(F, f)\bigg)
\\    +\tNpperp_{H_R^{\boundY/2}}(\Dparf^{\betapar}F, \Dparf^{\betapar}f) \bigg],
  \end{multlined}
\end{aligned}
\end{multline*}
where we have used Lemma~\ref{lem:localhighregularity3D-ef} in the last step. 
\end{proof}
\begin{corollary}
Let $\bff\in\cF$ and $\boundY>0$. Let $U$ be the solution of \eqref{eq:minimization}.
Then,
there exists $\gamma > 0$ depending only on $s$, $\Omega$, $\zeta$
and $\boundY$, 
and for every $\varepsilon \in (0,1/2)$, there exists $C_\varepsilon>0$ 
depending additionally on $\varepsilon$
such that for all $\betapar = (\beta_{\parallel,1}, \beta_{\parallel,2})\in \N^2_0$ 
and $ \betaperp\in \N_0$,
with $\ppar = \beta_{\parallel, 1} + \beta_{\parallel, 2}$, $\pperp = \betaperp$, and $p=\ppar+\pperp$,
it holds that
  \begin{equation}
    \|\rf^{\pperp-1/2+\varepsilon} \Dperpf^{\betaperp}\Dparf^{\beta_{\parallel}} \nabla U\|_{L^2_{\alpha}(\omegaf^{\boundY/4})}^2
    \leq
    C_\varepsilon \gamma^{2p+1}p^{2p}   \tNp_{\Omega^\boundY}(F, f).
  \end{equation}
\end{corollary}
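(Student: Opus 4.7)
The plan is to deduce this corollary directly from Lemma~\ref{lemma:regularity-halfball}, exactly in parallel with the corresponding corollary for edge neighborhoods that followed Lemma~\ref{lemma:regularity-W}.

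First, since by its very definition $\omegaf$ is bounded away from every edge and every vertex ($\re \geq \omegaeps^2$ and $\rv \geq \omegaeps$ hold throughout $\omegaf$), the closure $\overline{\omegaf}$ can be covered by a \emph{finite} number of half-balls $H_{R_k} = B_{R_k}(x_k) \cap \Omega$ with centers $x_k \in \bff$ and radii $R_k \simeq \omegaeps^2$, chosen so that each concentrically enlarged half-ball $H_{R_k/c}$ (for the fixed $c \in (0,1)$ of Lemma~\ref{lemma:regularity-halfball}) is still a genuine half-ball inside $\Omega$ and does not meet any edge. Applying Lemma~\ref{lemma:regularity-halfball} to each $H_{R_k}$ with the constant radius $R = R_k \simeq 1$ (the resulting $R_k$-powers being absorbed into $\gamma$ and $C_\varepsilon$) and summing over the finitely many $k$ yields
\begin{equation*}
\|\rf^{\pperp-1/2+\varepsilon} \Dperpf^{\betaperp}\Dparf^{\betapar}\nabla U\|_{L^2_\alpha(\omegaf^{\boundY/4})}^2 \leq C_\varepsilon \gamma^{2p+1} p^{2p} \Bigl( \|\nabla U\|_{L^2_\alpha(\Omega\times(0,\boundY))}^2 + \tNp_{\Omega^\boundY}(F,f) \Bigr).
\end{equation*}

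Finally, the global energy norm $\|\nabla U\|_{L^2_\alpha(\Omega\times(0,\boundY))}^2$ appearing on the right-hand side is controlled by the a priori bound \eqref{eq:CUFf-simplified} in terms of $\|f\|_{H^{1-s}(\Omega)}^2 + \|F\|_{L^2_{-\alpha}(\R^3\times(0,\boundY))}^2$; both of these quantities are in turn dominated by the $j=1$ (and, for $\|f\|_{H^{1-s}}$, the $j=2$) summands in the definition \eqref{eq:Ntilde} of $\tNp_{\Omega^\boundY}(F,f)$ after slightly enlarging $\gamma$. There is no genuine obstacle here: the analytical heart of the argument is Lemma~\ref{lemma:regularity-halfball} itself, and the role of the corollary is purely bookkeeping, patching finitely many local half-ball estimates into one global statement on $\omegaf$.
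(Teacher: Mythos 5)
Your proof is correct and matches the paper's one-line argument, which applies Lemma~\ref{lemma:regularity-halfball} with $R\simeq 1$ (your finite-covering observation correctly unpacks what ``$R\simeq 1$'' means for a face neighborhood of fixed size) and then invokes \eqref{eq:CUFf-simplified}. One shared imprecision worth noting: $\|f\|_{H^{1-s}(\Omega)}^2$ contains $\|f\|_{L^2(\Omega)}^2$, which is not literally dominated by the $j\ge 1$ summands of $\tNp_{\Omega^\boundY}(F,f)$ as you assert; this same gap is implicit in the paper's own corollary statement and is immaterial for the analytic bootstrap and for Theorem~\ref{thm:mainresult}, but it means the claim that ``both quantities are dominated by the $j=1$ and $j=2$ summands'' is slightly too strong as stated.
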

\begin{proof}
  This follows directly from Lemma~\ref{lemma:regularity-halfball} with $R\simeq 1$ and from \eqref{eq:CUFf-simplified}.
\end{proof}

\subsubsection {Vertex-face neighborhoods $\omegavf$}
\label{sec:omegavf}
We have 
\begin{align*}
r_{\mathbf{v}} \sim r_{\mathbf{e}} \qquad \text{ and } \qquad r_{\mathbf{f}} \leq r_{\mathbf{e}} \qquad \text{ on $\omegavf$}. 
\end{align*}
\begin{lemma}[Weighted $H^p$-regularity in $\omegavf$] 
\label{lemma:regularity-omegavf}
Let $U$ be the solution of \eqref{eq:minimization}.
There is $\gamma > 0$ depending only on $s$, $\Omega$,
and $\boundY$, 
and for every $\varepsilon \in (0,1/2)$, there exists $C_\varepsilon>0$ 
depending additionally on $\varepsilon$
such that for all $\betapar = (\beta_{\parallel,1}, \beta_{\parallel,2})\in \N^2_0$ and
$ \betaperp\in \N_0$,
with $\ppar = \beta_{\parallel, 1} + \beta_{\parallel, 2}$, $\pperp = \betaperp$, and $p=\ppar+\pperp$, 
it holds that
\begin{equation}
  \label{eq:estimate-omegavf}
\begin{aligned}
  \|\rv^{\ppar+\varepsilon}\rf^{\pperp-1/2+\varepsilon} \Dperpf^{\betaperp}\Dparf^{\beta_{\parallel}} \nabla U\|_{L^2_{\alpha}(\omegavf^{\boundY/4})}^2
  &\leq
C_\varepsilon \gamma^{2p+1}p^{2p}  \tNp_{\Omega^\boundY}(F, f),
\end{aligned}
\end{equation}
where $\Dparf^{\betapar} = \Doneparf^{\beta_{\parallel,
    1}}\Dtwoparf^{\beta_{\parallel, 2}}$.
\end{lemma}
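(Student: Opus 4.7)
The plan is to mirror exactly the proof of Lemma~\ref{lemma:regularity-omegave} (the vertex-edge case), replacing the edge-centered wedge covering with a face-centered half-ball covering. The underlying geometric facts are completely parallel: on $\omegave$ one has $\rv \simeq \re$ at the scale of wedges centered on $\bfe$ with radii $\sim \rv$, while on $\omegavf$ one has $\rv \simeq \re$ at the scale of half-balls centered on $\bff$ with radii $\sim \rv$. The face-weight $\rf$ plays here the same role that $\re$ played there.

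Concretely, I would first invoke Lemma~\ref{lemma:covering-omega_vf} to obtain a locally finite covering of $\omegavf$ by half-balls $H_i \subset B_{cR_i}(x_i)\cap\Omega$ with $x_i \in \bff$ and $R_i = c\,\dist(x_i,\bfv)$, together with enlarged half-balls $\hH_i$ of radius $\hc\,\dist(x_i,\bfv)$ satisfying a finite overlap property. The key geometric point is that on each $\hH_i$, $\rv(x) \simeq R_i$ uniformly in $i$. Splitting the left-hand side over the $H_i$, I would extract the vertex weight as a factor $R_i^{2\ppar+2\varepsilon}$ and recognize that what remains on each half-ball is precisely the left-hand side of Lemma~\ref{lemma:regularity-halfball}. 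Applying that lemma on each $H_i$ with $R = R_i$ and with $\hH_i$ playing the role of $H_R$, I obtain, up to a multiplicative constant $\gamma^{2p+1} p^{2p}$, a sum of three local quantities.

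The remaining step is to sum these contributions over $i$. The combination $R_i^{2\ppar+2\varepsilon}\cdot R_i^{-2\ppar-1}$ multiplied by $\|\nabla U\|^2_{L^2_\alpha(\hH_i^\boundY)}$ becomes $\|\rv^{-1/2+\varepsilon}\nabla U\|^2_{L^2_\alpha(\hH_i^\boundY)}$ after reabsorbing $R_i^{-1+2\varepsilon}\simeq \rv^{-1+2\varepsilon}$; the remaining $R_i$-factors multiplying the $\tNppar$ and $\tNpperp$ terms are bounded, so those data terms survive without weights. The finite overlap of the $\hH_i$ and the bound $\rv \geq r_{\partial\Omega}$ then yield a global estimate involving $\|r_{\partial\Omega}^{-1/2+\varepsilon}\nabla U\|^2_{L^2_\alpha(\Omega^\boundY)}$ together with $\tNp_{\Omega^\boundY}(F,f)$. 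I would conclude using Lemma~\ref{lem:estH13D} to control the weighted gradient norm, and \eqref{eq:CUFf-simplified} together with the definition \eqref{eq:Ntilde} to absorb the base-regularity contribution into $\tNp_{\Omega^\boundY}(F,f)$. I do not anticipate any substantial obstacle: the argument is structurally identical to the vertex-edge case, and all required ingredients (the face-centered covering, the half-ball Caccioppoli iteration, and the face-tangential localized shift) have already been prepared.
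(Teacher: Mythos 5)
Your proposal is correct and follows essentially the same route as the paper's proof: cover $\omegavf$ via Lemma~\ref{lemma:covering-omega_vf} by half-balls on $\bff$ with radii $\sim\rv$, extract the vertex weight as $R_i^{2\ppar+2\varepsilon}$, apply Lemma~\ref{lemma:regularity-halfball} on each half-ball, and then sum using finite overlap, $\rv\geq r_{\partial\Omega}$, Lemma~\ref{lem:estH13D}, and \eqref{eq:CUFf-simplified}. This matches the paper's argument step for step.
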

\begin{proof}
  We use the covering of scaled half-balls $H_i = B_{cR_i}(x_i)\cap\Omega$ 
  with $\hH_i = B_{R_i}(x_i)\cap \Omega$ given by Lemma~\ref{lemma:covering-omega_vf}.
  We have, for some constant $C_\boundY > 1$, 
  \begin{equation*}
    \forall i \in \N\quad \forall x \in \hH_i 
    \qquad 
    C^{-1}_\boundY R_i \leq \rv(x)  \leq C_\boundY  R_i. 
  \end{equation*}
  Using
  this and Lemma~\ref{lemma:regularity-halfball}, 
  we obtain
  \begin{multline*}
    \|\rv^{\ppar+\varepsilon}\rf^{\pperp-1/2+\varepsilon} \Dperpf^{\betaperp}\Dparf^{\beta_{\parallel}} \nabla U\|_{L^2_{\alpha}(\omegavf^{\boundY/4})}^2 
    \\
    \begin{aligned}[t]
      &\leq \sum_{i\in \N} (C_\boundY R_i)^{2\ppar+2\varepsilon} \|\rf^{\pperp-1/2+\varepsilon} \Dperpf^{\betaperp}\Dparf^{\beta_{\parallel}} \nabla U\|_{L^2_{\alpha}(H_i^{\boundY/4})}^2
       \\
      &\leq
        \begin{multlined}[t][\arraycolsep]
        \sum_{i\in \N} (C_\boundY R_i)^{2\ppar+2\varepsilon}  \gamma_1^{2p+1}p^{2p} \bigg[R_i^{-2\ppar }
        \bigg(R_i^{-1}\|\nabla U\|_{L^2_\alpha(\hH_i^\boundY)}^2
        + R_i^{s} \tNppar_{\hH_i^\boundY}(F, f)\bigg)
        \\ + \tNpperp_{\hH_i^{\boundY/2}}(\Dparf^{\betapar}F, \Dparf^{\betapar}f)\bigg]
      \end{multlined}
        \\
      &\leq  \gamma^{2p+1}p^{2p}\sum_{i\in \N} \bigg[ \|\rv^{-1/2+\varepsilon}\nabla U\|_{L^2_\alpha(\hH^\boundY_i)}^2 +
        \tNppar_{\hH_i^\boundY}(F, f) + \tNpperp_{\hH_i^{\boundY/2}}(\Dparf^{\betapar}F, \Dparf^{\betapar}f)\bigg].
    \end{aligned}
  \end{multline*}
  The bound $ \rv(x)\geq r_{\partial \Omega}(x)$, the finite overlap of the
  half-balls $\hH_i$, Lemma \ref{lem:estH13D}, and \eqref{eq:CUFf-simplified} conclude the proof.
\end{proof}
\subsubsection{Edge-face neighborhoods $\omegaef$}
\label{sec:omegaef}
We have 
\begin{align*}
r_{\mathbf{f}} \leq r_{\mathbf{e}} \qquad \text{ on $\omegaef$}. 
\end{align*}
We recall the directional coordinates in Def.~\ref{def:gs}.
\begin{lemma}[Weighted $H^p$-regularity in $\omegaef$] 
\label{lemma:regularity-omegaef}
Let $U$ be the solution of \eqref{eq:minimization}.
There is $\gamma > 0$ depending only on $s$, $\Omega$,
and $\boundY$, such that
for every $\varepsilon \in (0,1/2)$, there exists $C_\varepsilon>0$ 
  depending additionally on $\varepsilon$ such that for all $(\betapar, \betaparperp, \betaperp) \in  \N_0^3$,
  $ p = \betapar +  \betaparperp +  \betaperp \in \N_0$,
\begin{equation}
  \label{eq:estimate-omegaef}
  \|\re^{\betaparperp+\varepsilon}\rf^{\betaperp-1/2+\varepsilon} \Dperpg^{\betaperp}\Dparperpg^{\betaparperp}\Dparg^{\betapar} \nabla U\|_{L^2_{\alpha}(\omegaef^{\boundY/8})}^2
  \leq
C_\varepsilon \gamma^{2p+1}p^{2p}  \tNp_{\Omega^\boundY}(F, f)
\;.
\end{equation}
\end{lemma}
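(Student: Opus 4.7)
The plan is to follow the pattern of Lemma~\ref{lemma:regularity-omegavf}, but with an additional layer of refinement exploiting that $\gpar$ is tangential to \emph{both} the edge $\bfe$ and the face $\bff$, so that $\Dparg$ is tangential to $\partial\Omega$ throughout a neighborhood of $\omegaef$. This geometric fact is what allows the weight to carry only $\re^{\betaparperp+\varepsilon}$ rather than the $\re^{(\betapar+\betaparperp)+\varepsilon}$ that one would get from a direct application of Lemma~\ref{lemma:regularity-halfball}.

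First, I would cover $\omegaef$ by half-balls $H_i = B_{c R_i}(x_i)\cap\Omega$ with $x_i\in\bff$ and $R_i \simeq \re(x_i)$, using Lemma~\ref{lemma:covering-omega_ef}. On each $H_i$ I would apply Lemma~\ref{lemma:regularity-halfball} with only $\Dparperpg$ playing the role of parallel direction (i.e.\ setting $\beta_{\parallel,2}=0$), but applied to the tangentially pre-differentiated solution $\Dparg^{\betapar} U$ with data $\Dparg^{\betapar} F$, $\Dparg^{\betapar} f$. This is legitimate because the proof of Lemma~\ref{lemma:regularity-halfball} rests on cutoff arguments and the difference-quotient technique of Lemma~\ref{lem:localhighregularity3D-ef}, which only require the differentiation direction to be tangential to $\partial\Omega$ near the neighborhood in question; our $\Dparg$ meets that requirement. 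The resulting local estimate takes the shape
\begin{equation*}
\|\rf^{\betaperp-1/2+\varepsilon}\Dperpg^{\betaperp}\Dparperpg^{\betaparperp}\Dparg^{\betapar}\nabla U\|^2_{L^2_\alpha(H_i^{\boundY/8})} \lesssim \gamma^{2q+1}q^{2q}\Bigl[R_i^{-2\betaparperp-1}\Bigl(\|\Dparg^{\betapar}\nabla U\|^2_{L^2_\alpha(\hH_i^\boundY)} + \cdots\Bigr) + \cdots\Bigr],
\end{equation*}
with $q = \betaparperp+\betaperp$ and the dots standing for the expected $\tN$-type terms with $F$, $f$ replaced by their $\Dparg^{\betapar}$-derivatives.

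Multiplying by $R_i^{2\betaparperp+2\varepsilon}\simeq \re^{2\betaparperp+2\varepsilon}$ on $H_i$, summing over $i$, and using the finite overlap of $\{\hH_i\}$, the dominant summand collapses into $\|\re^{-1/2+\varepsilon}\Dparg^{\betapar}\nabla U\|^2_{L^2_\alpha(\omegaef^{\hat{\omegaeps}}\times(0,\boundY))}$ (with $\hat\omegaeps$ a slight enlargement of $\omegaeps$), while the remaining terms absorb into $\tN^{(p)}_{\Omega^\boundY}(F,f)$ with $p = \betapar+q$. To bound the dominant summand, I would use $\re \geq \rf = r_{\partial\Omega}$ near $\bff$ to replace $\re^{-1/2+\varepsilon}$ by $r_{\partial\Omega}^{-1/2+\varepsilon}$, note that $r_\bfv \geq \omegaeps$ on $\omegaef$ so that $\omegaef^{\hat\omegaeps}$ stays bounded away from every vertex, and cover it by a \emph{finite} collection of wedges $W_j = B_{R_j}(x_j)\cap\Omega$ centered on $\bfe$ with $R_j \simeq 1$. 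Applying Lemma~\ref{lem:localhighregularity3D-ef} with $\bfs=\bfe$ and $\beta=\betapar$ on each $W_j$ yields $\|\re^{-1/2+\varepsilon}\Dparg^{\betapar}\nabla U\|^2_{L^2_\alpha(\omegaef^{\hat{\omegaeps}}\times(0,\boundY))} \lesssim (\gamma\betapar)^{2\betapar}(1+\gamma\betapar)\tN^{(\betapar)}_{\Omega^\boundY}(F,f)$. Combining the two estimates and using $p^{2p} \geq q^{2q}\betapar^{2\betapar}$ to consolidate constants closes the bound.

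The main obstacle is the step that applies Lemma~\ref{lemma:regularity-halfball} to the tangentially differentiated function $\Dparg^{\betapar}U$ rather than to $U$ itself. Although $\gpar$ is parallel to both $\bfe$ and $\bff$ and hence tangential to $\partial\Omega$ near $\omegaef$, the cutoff construction inside the proof of Lemma~\ref{lem:localhighregularity3D-ef} must be revisited to verify that the difference quotients in the $\gpar$-direction remain well-defined after multiplication by the cutoff $\eta$, and that the Dirichlet condition passes cleanly to the pre-differentiated problem with data $\Dparg^{\betapar}F$, $\Dparg^{\betapar}f$. This is the same technical point already implicit in Lemma~\ref{lem:localhighregularity3D-ef} for $\bfs\in\cE$, but here it demands careful bookkeeping of the data across the two nested local estimates.
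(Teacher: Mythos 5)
Your proposal is correct and takes essentially the same route as the paper: cover $\omegaef$ by half-balls centered on $\bff$ with radii proportional to $\re$, apply the half-ball estimate (Lemma~\ref{lemma:regularity-halfball}) to the pre-differentiated solution $\Dparg^{\betapar} U$ using $\gparperp$ as the parallel direction, sum using the finite overlap, and then bound the remaining $\|\re^{-1/2+\varepsilon}\Dparg^{\betapar}\nabla U\|$ term with Lemma~\ref{lem:localhighregularity3D-ef} applied at scale $R\simeq 1$ (the paper's reference to Lemma~\ref{lemma:covering-omega_vf} is evidently a typo for Lemma~\ref{lemma:covering-omega_ef}, so your choice of covering is the right one). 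The technical concern you flag at the end—that the difference-quotient and cutoff machinery must accept $\gpar$ as a tangential direction for the pre-differentiated data—is the same point the paper relies on without extra comment, and it is justified exactly because $\gpar$ is parallel to both $\bfe$ and $\bff$.
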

\begin{proof}
  We write interchangeably $p_{\bullet}$ and $\beta_\bullet$, for $\bullet \in
  \{\parperp, \parallel, \perp\}$.
  We use the covering of scaled half-balls $H_i = B_{cR_i}(x_i)\cap\Omega$ with 
  $\hH_i = B_{R_i}(x_i)\cap \Omega$ given by Lemma~\ref{lemma:covering-omega_vf}.
  We have, for some constant $C_\boundY > 1$, 
  \begin{equation*}
    \forall i \in \N\quad \forall x \in \hH_i 
    \qquad 
    C^{-1}_\boundY R_i \leq \re(x)  \leq C_\boundY  R_i. 
  \end{equation*}
  Applying Lemma~\ref{lemma:regularity-halfball} to the function
  $\Dparg^{\betapar}U$, which solves \eqref{eq:minimization} with data
  $\Dparg^{\betapar} F$,
  $\Dparg^{\betapar} f$, and remarking that $\gparperp$ is parallel to $\bff$,
  \begin{multline*}
    \|\re^{\pparperp+\varepsilon}\rf^{\pperp-1/2+\varepsilon}\Dperpg^{\betaperp}\Dparperpg^{\betaparperp}\Dparg^{\betapar}  \nabla U\|_{L^2_{\alpha}(\omegaef^{\boundY/8})}^2 
    \\
    \begin{aligned}[t]
      &\leq \sum_{i\in \N} (C_\boundY R_i)^{2\pparperp+2\varepsilon} \|\rf^{\pperp-1/2+\varepsilon} \Dperpg^{\betaperp}\Dparperpg^{\betaparperp}\Dparg^{\betapar} \nabla U\|_{L^2_{\alpha}(H_i^{\boundY/8})}^2
       \\
      &
        \begin{multlined}[t][\arraycolsep]
        \leq \sum_{i\in \N} (C_\boundY R_i)^{2\pparperp+2\varepsilon} \gamma_1^{2(\pperp+\pparperp)+1}(\pperp+\pparperp)^{2(\pperp+\pparperp)} \bigg[R_i^{-2\pparperp }
        \bigg(R_i^{-1}\|\Dparg^{\betapar}\nabla U\|_{L^2_\alpha(\hH_i^{\boundY/2})}^2 +
        \\
        R_i^{s} \tN^{(\pparperp)}_{\hH_i^{\boundY/2}}(\Dparg^{\betapar}F,
        \Dparg^{\betapar}f)\bigg) 
        + \tNpperp_{\hH_i^{\boundY/4}}(\Dparg^{\betapar}\Dparperpg^{\betaparperp}F, \Dparg^{\betapar}\Dparperpg^{\betaparperp}f)\bigg]
      \end{multlined}
        \\
      &
        \begin{multlined}[t][\arraycolsep]
        \leq
        \gamma^{2(\pperp+\pparperp)+1}(\pperp+\pparperp)^{2(\pperp+\pparperp)}\sum_{i\in
          \N} \bigg[ \|\re^{-1/2+\varepsilon}\Dparg^{\betapar}\nabla U\|_{L^2_\alpha(\hH^{\boundY/2}_i)}^2
        +
        \\
        \tN^{(\pparperp+\ppar)}_{\hH_i^{\boundY/2}}(F, f) + \tNpperp_{\hH_i^{\boundY/4}}(\Dparf^{\betapar}F, \Dparf^{\betapar}f)\bigg].
      \end{multlined}
    \end{aligned}
  \end{multline*}
  The bound $ \re(x)\geq r_{\partial \Omega}(x)$ and the finite overlap of the
  half-balls $\hH_i$ imply that we can apply Lemma
  \ref{lem:localhighregularity3D-ef} to obtain, 
  for a constant $C>0$ that
  depends on $\omegaeps$ and on the covering of half-balls, 
  \begin{equation*}
    \sum_{i\in \N} \|\re^{-1/2+\varepsilon}\Dparg^{\betapar}\nabla U\|_{L^2_\alpha(\hH^{\boundY/2}_i)}^2 \leq C R^{-2\ppar-1}
    (\gamma \ppar)^{2\ppar}(1+\gamma \ppar)\bigg(\| \nabla U\|^2_{L^2_\alpha(\tomegaef^\boundY)} 
    +  R^{s+1}\tN^{(\ppar)}_{\tomegaef^\boundY}(F, f)\bigg),
  \end{equation*}
  where $\tomegaef^\boundY$ is a domain that contains the union of the half-balls
  $\hH_i$ and where we can choose $R\simeq 1$.
  Equation \eqref{eq:CUFf-simplified} concludes the proof.
\end{proof}
\subsubsection{Vertex-edge-face neighborhoods $\omegavef$}
\label{sec:omegavef}
We have 
\begin{align*}
r_{\mathbf{f}} \leq r_{\mathbf{e}} \leq r_{\mathbf{v}} \qquad \text{ on $\omegavef$}. 
\end{align*}
We recall the directional coordinates in Def.~\ref{def:gs}.
\begin{lemma}[Weighted $H^p$-regularity in $\omegavef$] 
  \label{lemma:regularity-omegvaef}
  Let $U$ be the solution of \eqref{eq:minimization}.
  There is $\gamma > 0$ depending only on $s$, $\Omega$,
  and $\boundY$, 
  and for every $\varepsilon \in (0,1/2)$, there exists $C_\varepsilon>0$ 
  depending additionally on $\varepsilon$
  such that for all $(\betapar, \betaparperp, \betaperp) \in  \N_0^3$,
  $ p = \betapar +  \betaparperp +  \betaperp \in \N_0$,
  \begin{equation}
    \label{eq:estimate-omegavef}
    \|\rv^{\betapar+\varepsilon}\re^{\betaparperp+\varepsilon}\rf^{\betaperp-1/2+\varepsilon} \Dperpg^{\betaperp}\Dparperpg^{\betaparperp}\Dparg^{\betapar} \nabla U\|_{L^2_{\alpha}(\omegavef^{\boundY/8})}^2
    \leq
    C_\varepsilon \gamma^{2p+1}p^{2p}  \tNp_{\Omega^\boundY}(F, f)
\;.
  \end{equation}
\end{lemma}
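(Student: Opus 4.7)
The strategy is to combine the vertex-edge covering technique of Lemma~\ref{lemma:regularity-omegave} with the edge-face estimate already established in Lemma~\ref{lemma:regularity-omegaef}. The plan is to cover $\omegavef$ at the coarsest scale by wedges whose size is proportional to $\rv$, extract the vertex weight at that scale, and then apply inside each wedge the edge-face regularity estimate that treats the remaining derivatives and weights.

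First I would invoke Lemma~\ref{lemma:covering-omega_vef} to obtain a family of wedges $W_i\subset B_{cR_i}(x_i)\cap\Omega$ with radii $R_i=c\dist(x_i,\bfv)$, centered on $\bfe$, together with their scaled counterparts $\hW_i\subset B_{\hR_i}(x_i)\cap\Omega$, enjoying the finite-overlap property. On each $\hW_i$ one has the two-sided bound $C_W^{-1}R_i\leq \rv(x)\leq C_W R_i$, so the vertex weight can be pulled out:
\begin{equation*}
\|\rv^{\betapar+\varepsilon}\re^{\betaparperp+\varepsilon}\rf^{\betaperp-1/2+\varepsilon} \Dperpg^{\betaperp}\Dparperpg^{\betaparperp}\Dparg^{\betapar} \nabla U\|_{L^2_{\alpha}(\omegavef^{\boundY/8})}^2
\leq \sum_{i} (C_W R_i)^{2\betapar+2\varepsilon} \|\re^{\betaparperp+\varepsilon}\rf^{\betaperp-1/2+\varepsilon} \Dperpg^{\betaperp}\Dparperpg^{\betaparperp}\Dparg^{\betapar} \nabla U\|_{L^2_{\alpha}(W_i^{\boundY/8})}^2.
\end{equation*}

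Next, on each wedge $W_i$ the local geometry is that of an edge-face neighborhood at scale $R_i$: within $W_i$ one has $\re\lesssim R_i$ and a natural covering by half-balls centered on $\bff$ with radii proportional to $\re$, exactly as in the proof of Lemma~\ref{lemma:regularity-omegaef}. Applying this edge-face argument inside each $W_i$ (using Lemma~\ref{lemma:regularity-halfball} on $\Dparg^{\betapar}U$ over each half-ball and then Lemma~\ref{lem:localhighregularity3D-ef} to dispose of the remaining $\Dparg^{\betapar}$-term on an enlarged wedge $\hW_i$) yields, with $p=\betapar+\betaparperp+\betaperp$, the local estimate
\begin{multline*}
\|\re^{\betaparperp+\varepsilon}\rf^{\betaperp-1/2+\varepsilon} \Dperpg^{\betaperp}\Dparperpg^{\betaparperp}\Dparg^{\betapar} \nabla U\|_{L^2_{\alpha}(W_i^{\boundY/8})}^2
\\
\leq C\,\gamma_1^{2(\betaparperp+\betaperp)+1}(\betaparperp+\betaperp)^{2(\betaparperp+\betaperp)}\Bigl[R_i^{-2\betapar-1}\bigl(\|\nabla U\|_{L^2_\alpha(\hW_i^\boundY)}^2+R_i^{s+1}\tN^{(\betapar)}_{\hW_i^\boundY}(F,f)\bigr)+\tN^{(p)}_{\hW_i^\boundY}(F,f)\Bigr].
\end{multline*}

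Summing over $i$, the factor $R_i^{2\betapar+2\varepsilon}\cdot R_i^{-2\betapar-1}=R_i^{2\varepsilon-1}$ is absorbed into $\rv^{-1+2\varepsilon}\lesssim r_{\partial\Omega}^{-1+2\varepsilon}$, the finite-overlap property of the $\hW_i$ collapses the sum, and the global residual
\begin{equation*}
\|r_{\partial\Omega}^{-1/2+\varepsilon}\nabla U\|_{L^2_\alpha(\Omega^\boundY)}^2+\tN^{(p)}_{\Omega^\boundY}(F,f)
\end{equation*}
is bounded by $C_\varepsilon\,\tN^{(p)}_{\Omega^\boundY}(F,f)$ via Lemma~\ref{lem:estH13D} and \eqref{eq:CUFf-simplified}. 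The combined $\gamma$-constants coalesce into a single $\gamma^{2p+1}p^{2p}$-factor, giving the claimed bound. The main technical obstacle is the bookkeeping of the three levels of covering/scaling so that the $\gamma^p p^p$-factors produced by each layer of bootstrapping combine cleanly; this works because at each level only derivatives in one of the directions $\gpar$, $\gparperp$, $\gperp$ are introduced, and the estimates from Corollaries~\ref{cor:CaccHighBound-e} and \ref{cor:CaccHighBound-f} together with Lemma~\ref{lem:localhighregularity3D-int} commute with the directional splitting used in Definition~\ref{def:gs}.
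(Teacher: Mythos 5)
Your proposal is correct and follows essentially the same route as the paper: coarse covering of $\omegavef$ by wedges via Lemma~\ref{lemma:covering-omega_vef} to extract the $\rv$-weight, followed by the edge-face argument of Lemma~\ref{lemma:regularity-omegaef} (half-ball covering, Lemma~\ref{lemma:regularity-halfball} on $\Dparg^{\betapar}U$, then Lemma~\ref{lem:localhighregularity3D-ef}) inside each wedge, with finite overlap, Lemma~\ref{lem:estH13D}, and \eqref{eq:CUFf-simplified} to conclude. The only small slip is that your displayed per-wedge estimate carries the prefactor $\gamma_1^{2(\betaparperp+\betaperp)+1}(\betaparperp+\betaperp)^{2(\betaparperp+\betaperp)}$ although Lemma~\ref{lem:localhighregularity3D-ef} has already been invoked, so it should also contain the $(\gamma\betapar)^{2\betapar}(1+\gamma\betapar)$ factor (as in the paper's $(\gamma_1 p)^{2p+1}$ prefactor); your concluding remark about the $\gamma$-constants coalescing shows you intend this, so it is only a bookkeeping inconsistency in the display.
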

\begin{proof}
  We write interchangeably $p_{\bullet}$ and $\beta_\bullet$, 
  for $\bullet \in \{\parperp, \parallel, \perp\}$.
  We use the covering of wedges $W_i$, $\hW_i$ 
  given by Lemma \ref{lemma:covering-omega_vef}.
  We have,
  for some constant $C_W > 1$, 
  \begin{equation*}
    \forall i \in \N\quad \forall x \in \hW_i 
    \qquad 
    C^{-1}_W R_i \leq \rv(x)  \leq C_W  R_i. 
  \end{equation*}
The arguments of Lemma~\ref{lemma:covering-omega_ef} give
a covering $ \bigcup_j H_j \supset W_{i}$
with half-balls $H_j = B_{c_1 \rf(x_j)}(x_j) \cap \Omega$, $x_j \in \bff$ 
and 
scaled half-balls $\hH_j = B_{\hc_1 \rf(x_j)}(x_j)\cap \Omega$ 
such that $\bigcup_j \hH_j \subset \hW_{i}$, 
provided one chooses the parameters $c_1,\hc_1 > 1$ small enough.

Consequently, as in the proof of Lemma \ref{lemma:regularity-omegaef}, we have
  \begin{multline*}
    \|\re^{\pparperp+\varepsilon}\rf^{\pperp-1/2+\varepsilon}\Dperpg^{\betaperp}\Dparperpg^{\betaparperp}\Dparg^{\betapar}
    \nabla U\|_{L^2_{\alpha}(W_i^{\boundY/8})}^2 
    \lesssim
    (\gamma_1 p)^{2p+1}\bigg[R_i^{-2\ppar-1}\bigg(\| \nabla U\|^2_{L^2_\alpha(\hW_i^{\boundY})} \\+  R_i^{s+1}\tN^{(\ppar)}_{\hW_i^{\boundY}}(F, f)\bigg)+\tN^{(\pparperp+\ppar)}_{\hW_i^{\boundY}}(F, f) + \tNpperp_{\hW_i^{\boundY/2}}(\Dparf^{\betapar}F, \Dparf^{\betapar}f)\bigg].
  \end{multline*}
  It follows that
  \begin{multline*}
    \|\rv^{\ppar+\varepsilon}\re^{\pparperp+\varepsilon}\rf^{\pperp-1/2+\varepsilon}\Dperpg^{\betaperp}\Dparperpg^{\betaparperp}\Dparg^{\betapar}  \nabla U\|_{L^2_{\alpha}(\omegavef^{\boundY/8})}^2 
    \\
    \begin{aligned}[t]
      &\leq \sum_{i\in \N} (C_WR_i)^{2\ppar+2\varepsilon} \|\re^{\pparperp+\varepsilon}\rf^{\pperp-1/2+\varepsilon} \Dperpg^{\betaperp}\Dparperpg^{\betaparperp}\Dparg^{\betapar} \nabla U\|_{L^2_{\alpha}(W_i^{\boundY/8})}^2
      \\
      & \lesssim
        \begin{multlined}[t][\arraycolsep]
        (\gamma p)^{2p+1}\sum_{i\in\N}\bigg[\| \rv^{-1/2+\varepsilon}\nabla
        U\|^2_{L^2_\alpha(\hW_i^{\boundY})} \\ +  R_i^{s+\varepsilon}\tN^{\ppar}_{\hW_i^{\boundY}}(F, f)+\tN^{(\pparperp+\ppar)}_{\hW_i^{\boundY}}(F, f) + \tNpperp_{\hW_i^{\boundY/2}}(\Dparf^{\betapar}F, \Dparf^{\betapar}f)\bigg].
      \end{multlined}
    \end{aligned}
  \end{multline*}
The finite overlap of the wedges $\hW_i$, Lemma \ref{lem:estH13D}, 
and equation \eqref{eq:CUFf-simplified} conclude the proof.
\end{proof}
%
\subsubsection{Unified weighted analytic regularity bounds for $U$}
\label{sec:UnifCSBd}
We unify the bounds in all neighborhoods in the following statement.
  \begin{proposition}
\label{prop:CSwgtSum}
    Let $\omega\subset\Omega$ be any set whose boundary intersect at most one
    $\bfv\in \cV$, one $\bfe\in \cE$, and one $\bff\in \cF$.
    Let $(\gperp, \gparperp, \gpar)$ be linearly independent unit vectors as in Def.~\ref{def:gs}.
    Then, there exists $\gamma>0$ such that for all $t<1/2$, there exists
    $C_t>0$ such that
    for all $\beta = (\betaperp, \betaparperp, \betapar)\in\N^3_0$ with $\beta_{\bfe_\perp} = (\beta_\perp, \beta_\parperp)$,
  \begin{equation*}
    \| r_{\partial \Omega}^{-t} r_{\bfv}^{|\beta|} \rho_{\bfv\bfe}^{|\beta_{\bfe_\perp}|} \rho_{\bfe\bff}^{\betaperp} D_{(\gperp, \gparperp, \gpar)}^{\beta} \nabla U \|_{L^2_\alpha(\omega^{\boundY/4})} \leq C_t \gamma^{2\betam+1}\betam^{2\betam}  \tN^{(\betam)}_{\Omega^\boundY}(F, f).
  \end{equation*}
\end{proposition}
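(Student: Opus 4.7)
The plan is to reduce the proposition to the seven neighborhood-specific weighted $H^p$-regularity estimates derived above (Lemmas \ref{lemma:regularity-omega_c}, \ref{lemma:regularity-omegave}, \ref{lemma:regularity-omegavf}, \ref{lemma:regularity-omegaef}, \ref{lemma:regularity-omegvaef}, together with the corollaries immediately after Lemma \ref{lemma:regularity-W} and Lemma \ref{lemma:regularity-halfball}), plus one interior estimate. Since $\omega$ abuts at most one vertex, one edge and one face, it lies in exactly one of the eight neighborhood types of the partition \eqref{eq:Nghbrhoods} (counting the interior). Moreover, the frame $(\gperp,\gparperp,\gpar)$ specified in Definition \ref{def:gs} coincides with the tangential/normal basis employed by the relevant lemma (up to a harmless rotation within the under-determined directions in lower-codimension cases), so the derivative $D^{\beta}_{(\gperp,\gparperp,\gpar)} \nabla U$ agrees with the quantity bounded by that lemma.

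The first key step is the weight identity
\[
  r_{\partial \Omega}^{-t} r_{\bfv}^{|\beta|} \rho_{\bfv\bfe}^{|\beta_{\bfe_\perp}|} \rho_{\bfe\bff}^{\betaperp} = r_{\partial\Omega}^{-t} r_\bfv^{\betapar} r_\bfe^{\betaparperp} r_\bff^{\betaperp},
\]
which follows at once from $\rho_{\bfv\bfe} = r_\bfe/r_\bfv$, $\rho_{\bfe\bff} = r_\bff/r_\bfe$ and $|\beta_{\bfe_\perp}| = \betaperp + \betaparperp$. Each individual lemma controls the $L^2_\alpha$-norm of a quantity of the form $r_\bfv^{\bullet+\varepsilon} r_\bfe^{\bullet+\varepsilon} r_\bff^{\bullet - 1/2 + \varepsilon} D^\beta \nabla U$, where the $-1/2+\varepsilon$ shift sits on $r_\bff$, $r_\bfe$ or $r_\bfv$ according to whether a face, an edge (but no face) or a vertex (but no edge or face) is abutted, and the other $r_\bullet$ factors enter only when the corresponding vertex/edge is abutted. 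On each neighborhood $r_{\partial\Omega}$ is equivalent to the distance to the closest abutted singular set, and the pointwise ordering $r_\bff \le r_\bfe \le r_\bfv$ holds whenever the corresponding components are abutted (since each set is contained in the closure of the next).

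With these ingredients in hand, the remaining step is to compare, case by case, the target integrand with the lemma integrand. A short pointwise computation yields a ratio of the form $r_\bfv^{-\varepsilon} r_\bfe^{-\varepsilon} r_\bff^{1/2-t-\varepsilon}$ (or a subset of these factors), which via the ordering above collapses to $r_\bff^{1/2-t-k\varepsilon}$ for some $k\in\{1,2,3\}$ depending on the neighborhood type. Choosing $\varepsilon \in (0,(1/2-t)/3)$ makes the exponent non-negative, hence the ratio uniformly bounded on $\omega$, and the target norm is controlled by the lemma's right-hand side. The interior case is handled separately by a finite covering of $\omega$ with balls at positive distance from $\partial\Omega$ and direct use of Corollary \ref{cor:CaccHighInt} together with Lemma \ref{lem:estH13D}. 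The main technical effort is thus purely bookkeeping: verifying that the directional frames, distance equivalences, and exponents align correctly across all eight neighborhood types.
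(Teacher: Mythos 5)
Your proposal is correct and takes the same route the paper intends: the paper states this proposition without an explicit proof, presenting it as a direct unification of Lemmas~\ref{lemma:regularity-omega_c}--\ref{lemma:regularity-omegvaef} and the corollaries following Lemmas~\ref{lemma:regularity-W} and~\ref{lemma:regularity-halfball}, and you have correctly supplied the missing bookkeeping (weight identity $r_{\bfv}^{|\beta|}\rho_{\bfv\bfe}^{|\beta_{\bfe_\perp}|}\rho_{\bfe\bff}^{\betaperp}=r_\bfv^{\betapar}r_\bfe^{\betaparperp}r_\bff^{\betaperp}$, pointwise comparison of weights using $r_\bff\le r_\bfe\le r_\bfv$ and $r_{\partial\Omega}\simeq$ distance to the nearest abutted set, and a choice of $\varepsilon<(1/2-t)/3$). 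One small imprecision: $\omega$ need not lie in \emph{exactly one} of the eight neighborhood types --- it may intersect several (e.g.\ $\omegav,\omegave,\omegavf,\omegavef$ for the same $\bfv$) --- but since these are finitely many and the frame of Definition~\ref{def:gs} is fixed by the abutted $\bfv,\bfe,\bff$, one simply sums the estimates over the intersections, so the argument still goes through.
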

\subsection{$H^p$-regularity for the solution $u$ in the polyhedron $\Omega$}
\label{sec:WgtHpRegu}
The preceding analytic regularity bounds on the solution 
$U$ of the CS extension \eqref{eq:extension3D} 
imply corresponding weighted, 
analytic regularity on the weak solution $u$ of the 
integral fractional Laplacian in the polyhedron $\Omega$
ie. \eqref{eq:weakform} via \eqref{eq:extension-b}.
Quantitative control of $u$ in terms of $U$ is achieved via
the multiplicative trace estimate given in the next lemma.
\begin{lemma}
  Let $\boundY>0$. There exists $C_{\mathrm{tr}, \boundY}>0$ such that, for
  all $V:\Omega \times (0, \boundY)\to \R$ with $V(x, \cdot)\in H^1_\alpha((0,
  \boundY))$ for all $x\in \Omega$, it holds that
  \begin{equation}
    \label{eq:trace-estimate}
    \abs{V(x,0)}^2 
    \leq 
    C_{\mathrm{tr}, \boundY} \left( 
      \norm{V(x,\cdot)}_{L^2_\alpha((0, \boundY))}^{1-\alpha}\norm{\partial_y V(x,\cdot)}_{L^2_\alpha((0, \boundY))}^{1+\alpha}
      +
      \|V(x,\cdot)\|^2_{L^2_\alpha((0, \boundY))}
    \right),
  \end{equation}
  where, for a function $v:\Rpos\rightarrow\R$, we write 
  $\|v\|^2_{L^2_\alpha((0, \boundY))}\coloneqq \displaystyle\int_{0}^{\boundY} y^\alpha |v(y)|^2\,dy$.
\end{lemma}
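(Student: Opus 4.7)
Fix $x\in\Omega$ and set $v(y)\coloneqq V(x,y)$, so that it suffices to prove the one–dimensional statement
\begin{equation*}
  |v(0)|^2 \leq C_{\mathrm{tr},\boundY}\bigl(\|v\|_{L^2_\alpha(0,\boundY)}^{1-\alpha}\|v'\|_{L^2_\alpha(0,\boundY)}^{1+\alpha} + \|v\|_{L^2_\alpha(0,\boundY)}^{2}\bigr)
\end{equation*}
for $v\in H^1_\alpha(0,\boundY)$, where $\alpha=1-2s\in(-1,1)$. The plan is to first establish an additive weighted trace inequality, and then upgrade it to the multiplicative form via a scaling/optimization argument.

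The starting point is the elementary identity $v(0) = v(\delta) - \int_0^\delta v'(t)\,dt$, valid for a.e. $\delta\in(0,\boundY)$, which yields
\begin{equation*}
  |v(0)|^2 \leq 2|v(\delta)|^2 + 2\Bigl(\int_0^\delta v'(t)\,dt\Bigr)^2.
\end{equation*}
Next I would write $\int_0^\delta v'(t)\,dt = \int_0^\delta t^{\alpha/2}v'(t)\cdot t^{-\alpha/2}\,dt$ and apply Cauchy--Schwarz; since $\alpha<1$, the integral $\int_0^\delta t^{-\alpha}\,dt = \delta^{1-\alpha}/(1-\alpha)$ is finite, and
\begin{equation*}
  \Bigl(\int_0^\delta v'(t)\,dt\Bigr)^2 \leq \tfrac{\delta^{1-\alpha}}{1-\alpha}\,\|v'\|_{L^2_\alpha(0,\delta)}^2.
\end{equation*}
Averaging the resulting pointwise bound over $\delta\in(0,\boundY)$ against the weight $\delta^\alpha$ (which is integrable near $0$ since $\alpha>-1$), and swapping the order of integration in the double integral, I would obtain an additive weighted trace estimate of the form
\begin{equation*}
  |v(0)|^2 \leq c_1\boundY^{-1-\alpha}\|v\|_{L^2_\alpha(0,\boundY)}^2 + c_2\boundY^{1-\alpha}\|v'\|_{L^2_\alpha(0,\boundY)}^2,
\end{equation*}
with constants $c_1,c_2$ depending only on $\alpha$.

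To recover the multiplicative form I would apply the additive bound on the truncated interval $(0,\lambda\boundY)$ for $\lambda\in(0,1]$, getting
\begin{equation*}
  |v(0)|^2 \leq A\lambda^{-1-\alpha} + B\lambda^{1-\alpha}, \qquad A\coloneqq c_1\boundY^{-1-\alpha}\|v\|_{L^2_\alpha}^2,\ B\coloneqq c_2\boundY^{1-\alpha}\|v'\|_{L^2_\alpha}^2,
\end{equation*}
and minimizing over $\lambda$. Calculus gives an optimal $\lambda_\star\sim (A/B)^{1/2}$. If $\lambda_\star\leq 1$, the minimum equals a constant multiple of $A^{(1-\alpha)/2}B^{(1+\alpha)/2}$, and by construction the powers of $\boundY$ inside $A$ and $B$ cancel, producing exactly the term $\|v\|^{1-\alpha}_{L^2_\alpha}\|v'\|^{1+\alpha}_{L^2_\alpha}$. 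In the complementary regime $\lambda_\star>1$ one has $\|v\|_{L^2_\alpha}^2\gtrsim \boundY^2\|v'\|_{L^2_\alpha}^2$, so the additive bound with $\lambda=1$ already gives $|v(0)|^2\lesssim \|v\|_{L^2_\alpha}^2$ with a $\boundY$–dependent constant. Combining the two regimes yields the desired inequality.

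The only genuinely delicate point is the balance of powers of $\boundY$: the cancellation of $\boundY^{-(1+\alpha)(1-\alpha)/2}$ from $A^{(1-\alpha)/2}$ against $\boundY^{(1-\alpha)(1+\alpha)/2}$ from $B^{(1+\alpha)/2}$ is exactly what makes the multiplicative term $\boundY$–independent; everything else is a careful bookkeeping of constants depending on $\alpha$ and $\boundY$. The condition $\alpha\in(-1,1)$ is used twice, once to ensure integrability of $y^\alpha$ near $0$ and once to ensure integrability of $y^{-\alpha}$, which is the real constraint enforcing that the trace at $y=0$ makes sense for $H^1_\alpha$ functions.
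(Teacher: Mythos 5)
Your proposal is correct, and it takes a genuinely different (more self-contained) route than the paper. The paper's proof simply invokes \cite[Lem.~3.7]{KarMel19}, which gives the multiplicative trace inequality on the full half-line $\Rpos$, and then reduces to that result by applying it to $W=\eta V$ for a cutoff $\eta\in C^\infty_0(-\boundY,\boundY)$ with $\eta(0)=1$; the product rule and the elementary inequality $(a+b)^{1+\alpha}\leq 2(a^{1+\alpha}+b^{1+\alpha})$ then produce the bounded-interval statement. You instead give a first-principles, one-dimensional proof: starting from $v(0)=v(\delta)-\int_0^\delta v'$, you square, split $v'=t^{-\alpha/2}\cdot t^{\alpha/2}v'$ and apply Cauchy--Schwarz (using $\alpha<1$), then average in $\delta$ against $\delta^\alpha$ (using $\alpha>-1$) to reach an additive bound $|v(0)|^2\leq c_1\boundY^{-1-\alpha}\|v\|_{L^2_\alpha}^2 + c_2\boundY^{1-\alpha}\|v'\|_{L^2_\alpha}^2$, and finally upgrade to the multiplicative form by applying this on $(0,\lambda\boundY)$ and minimizing over $\lambda\in(0,1]$. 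I checked the exponent bookkeeping: the minimum indeed scales as $A^{(1-\alpha)/2}B^{(1+\alpha)/2}$, the $\boundY$-powers cancel as you claim, and the regime $\lambda_\star>1$ correctly falls back to the $\|v\|^2$ term with a $\boundY$-dependent constant. What your approach buys is independence from the external reference and transparency about exactly where each of $\alpha>-1$ and $\alpha<1$ is used; what the paper's approach buys is brevity, since the heavy lifting is delegated to the cited lemma.
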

\begin{proof}
 From the proof of \cite[Lem.~3.7]{KarMel19}, we have, for all $W(x, \cdot) \in H^1_{\alpha}(\Rpos)$,
\begin{equation}
\label{eq:L3.7-KarMel19}
 \abs{W(x,0)}^2 
\leq 
C_{\mathrm{tr}} \left( 
\norm{W(x,\cdot)}_{L^2_\alpha(\Rpos)}^{1-\alpha}\norm{\partial_y W(x,\cdot)}_{L^2_\alpha(\Rpos)}^{1+\alpha}
+
\|W(x,\cdot)\|^2_{L^2_\alpha(\Rpos)}
\right).
\end{equation}
Let then $\eta \in C^\infty_0(-\boundY, \boundY)$ with $\eta (0) =  1$ 
and $\|\eta\|_{L^\infty(\R)} +
\|\eta'\|_{L^\infty(\R)}\leq C_\eta$.
Choose $W=\eta V$ in \eqref{eq:L3.7-KarMel19}. We obtain 
\begin{align*}
  \abs{V(x,0)}^2
  &= 
    \abs{(\eta V)(x,0)}^2  
  \\
  &\leq 
    C_{\mathrm{tr}}
    \left( 
    \norm{(\eta V)(x,\cdot)}_{L^2_\alpha(\Rpos)}^{1-\alpha}\norm{(\partial_y (\eta V))(x,\cdot)}_{L^2_\alpha(\Rpos)}^{1+\alpha}
    +
    \|(\eta V)(x,\cdot)\|^2_{L^2_\alpha(\Rpos)}
    \right)
  \\
  & \leq C_{\mathrm{tr}} C_\eta^2 \left( 2\| V(x, \cdot)\|_{L^2_\alpha((0, \boundY))}^{1-\alpha}  \| (\partial_yV)(x, \cdot)\|_{L^2_\alpha((0, \boundY))}^{1+\alpha}  + 3\| V(x, \cdot)\|_{L^2_\alpha((0, \boundY))}^2\right),
\end{align*}
where we have also used that $(a+b)^{1+\alpha}\leq {2}(a^{1+\alpha} + b
^{1+\alpha})$ for all $\alpha\in (-1, 1)$ and all non negative $a, b$.
\end{proof}
\begin{proof}[Proof of Thm.~\ref{thm:mainresult}]
Assume $|\beta|\geq 1$.
Using 
$V= D_{(\gperp, \gparperp, \gpar)}^{\beta} U $ in \eqref{eq:trace-estimate} together with 
multiplication by
$r_{\partial \Omega}^{-2t-2s} r_{\bfv}^{2|\beta|} 
 \rho_{\bfv\bfe}^{2|\beta_{\bfe_\perp}|} \rho_{\bfe\bff}^{2\betaperp}$ 
and integration over $\omega$ leads to
\begin{align*}
  &\norm{r_{\partial \Omega}^{-t-s} r_{\bfv}^{\betapar} r_{\bfe}^{\betaparperp} r_{\bff}^{\betaperp} D_{(\gperp, \gparperp, \gpar)}^{\beta} u}_{L^2(\omega)}^2 
\\
 &\qquad\quad \leq C_{\mathrm{tr},\boundY} 
  \norm{r_{\partial \Omega}^{-t-1} r_{\bfv}^{\betapar} r_{\bfe}^{\betaparperp} r_{\bff}^{\betaperp}  D_{(\gperp, \gparperp, \gpar)}^{\beta} U}_{L^2_\alpha(\omega^{\boundY/4})}^{1-\alpha}
   \norm{r_{\partial \Omega}^{-t} r_{\bfv}^{\betapar} r_{\bfe}^{\betaparperp} r_{\bff}^{\betaperp}   D_{(\gperp, \gparperp, \gpar)}^{\beta}\nabla U}_{L^2_\alpha(\omega^{\boundY/4})}^{1+\alpha} 
\\
  &\qquad\quad\quad + C_{\mathrm{tr},\boundY}   
  \norm{r_{\partial \Omega}^{-t-s} {r_{\bfv}^{\betapar} r_{\bfe}^{\betaparperp} r_{\bff}^{\betaperp}} D_{(\gperp, \gparperp, \gpar)}^{\beta} U}_{L^2_\alpha(\omega^{\boundY/4})}^{2}. 
\end{align*}
On each neighborhood $\omega$, it either holds that
$r_{\partial\Omega} \simeq r_{\bfv}$ 
(when $\partial \omega$ does not intersect with any face or edge of the boundary), 
$r_{\partial\Omega} \simeq r_{\bfe}$  
(when $\partial \omega$ intersects with an edge but no face of the boundary), 
or $r_{\partial\Omega} = r_{\bff}$. 
Consequently,  
as $|\beta|\geq 1$, 
there is a suitable $\widetilde{\beta} \in \N_0^3$ with $|\widetilde{\beta}|=|\beta|-1\geq 0$ 
such that 
\begin{align*}
  \norm{r_{\partial \Omega}^{-t-1} r_{\bfv}^{\betapar} r_{\bfe}^{\betaparperp} r_{\bff}^{\betaperp} 
        D_{(\gperp, \gparperp, \gpar)}^{\beta} U}_{L^2_\alpha(\omega^{\boundY/4})} 
\leq 
  \norm{r_{\partial \Omega}^{-t} r_{\bfv}^{\tbetapar} r_{\bfe}^{\tbetaparperp} r_{\bff}^{\tbetaperp}
        D_{(\gperp, \gparperp, \gpar)}^{\widetilde\beta}\nabla U}_{L^2_\alpha(\omega^{\boundY/4})}.
\end{align*}
Now, the statement follows from Proposition~\ref{prop:CSwgtSum}.

The case $\abs{\beta}=0$ essentially follows from a 1D weighted Hardy inequality similarly as in \cite{FMMS21}. 
Here, we illustrate the argument for the vertex-edge-face case $\omega = \omegavef$, 
noting that the remaining cases correspond verbatim to discussions in \cite{FMMS21}.

We use the coordinates $\{\gpar,\gparperp,\gperp\}$ introduced in
Definition~\ref{def:Coord} and -- by rotation and translation -- assume that the
local orthogonal coordinate system coincides with the canonical coordinates in $\R^3$. 
We introduce the equivalent vertex-edge-face neighborhood
\begin{align*}
\widetilde\omega_{\bf vef}^{\mu,\xi} := \{x \in \Omega: x_1 \in (0,\mu), x_2 \in (0,\xi x_1), x_3 \in (0,\xi x_2)\}
\end{align*}
and drop the superscripts in the following. 
We denote by $\widetilde u$ the function $u$ in the coordinate system in
$\widetilde \omega_{\mathbf{vef}}$.
We remark that there exists $c\geq 1$ such that
in $\widetilde\omega_{\mathbf{vef}}$ holds
\begin{equation}
  \label{eq:rv-xpar-equiv}
  x_{1} \leq r_{\mathbf{v}}(x) \leq c x_{1}, \qquad x_{2} \leq r_{\mathbf{e}}(x) \leq c x_{2}
\end{equation}
and we observe also $r_{\mathbf f}(x) = x_3 = r_{\partial\Omega}(x)$. 
Hence, for almost all $x_1\in (0,\mu)$ and $x_2 \in (0, \xi x_1)$, 
it holds that
\begin{align}
       \label{eq:u-ce-LinfL2}
  \bigg(  x_{3} \mapsto r_{\mathbf{f}}^{1-t-s}(D_{\gperp}\widetilde u)(x) \bigg) \in L^2((0, \xi x_2)) .
\end{align}
Now, the fundamental theorem of calculus, the Cauchy-Schwarz inequality, and 
\eqref{eq:u-ce-LinfL2} imply Hölder continuity of $\widetilde u(x_1,x_2,\cdot)$ for almost all $x_1,x_2$. As
$u \in \widetilde{H}^s(\Omega)$, we can therefore employ the Hardy inequality of 
     \cite[Lem.~{7.1.3}]{Kozlov1997}, which gives
     \begin{align*}
       \|
       r_{\mathbf{f}}^{-t-s}\widetilde u (x_1,x_2,\cdot)
       \|_{L^2((0, \xi x_{2}))}
       \leq C
       \|
       r_{\mathbf{f}}^{1-t-s}(D_{\gperp}\widetilde u)(x_1,x_2,\cdot) 
       \|_{L^2((0, \xi x_{2}))},
     \end{align*}
     with a constant $C$ independent of $x_1,x_2$.
Squaring, integrating in turn over $x_2\in (0,\xi x_1)$ and $x_1 \in (0,\mu)$, and using \eqref{eq:rv-xpar-equiv}, we obtain
     \begin{align*}
       \|
       r_{\partial\Omega}^{-t-s}\widetilde u 
       \|_{L^2(\widetilde\omega_{\mathbf{vef}})} 
       =
       \|
       r_{\mathbf{f}}^{-t-s}\widetilde u 
       \|_{L^2(\widetilde\omega_{\mathbf{vef}})}
       \leq
       C
       \|
       r_{\partial\Omega}^{-t-s} r_{\mathbf{f}} D_{\gperp}\widetilde u \|_{L^2(\widetilde\omega_{\mathbf{vef}})}.
     \end{align*} 
     The term in the right-hand side of the above inequality has been
     bounded in the first part of this proof; 
     this completes the proof except for the fact that 
     the region $\omega_{{\mathbf v}{\mathbf e} \mathbf{f}} \setminus \widetilde{\omega}_{{\mathbf v}{\mathbf e}\mathbf{f}}$ 
     is not covered yet. 
This region can be treated with modifying the parameter $\xi$, exactly as in \cite[Rem.~5.8]{FMMS21}.  
\end{proof}
\section{Conclusion}
\label{sec:Concl} 
For the Dirichlet integral fractional Laplacian $(-\Delta)^s$ 
in a bounded, polytopal domain $\Omega \subset \R^3$,
subject to a source term $f$ which is analytic in $\overline{\Omega}$,
we proved weighted, analytic regularity of weak solutions.
The analysis and the result extends the theory in polygons $\Omega\subset \R^2$, 
developed in our previous work \cite{FMMS21}, to dimension $d=3$.

As is well known from the numerical analysis of Galerkin approximations of solutions for elliptic PDEs,
weighted Sobolev regularity of solutions has direct consequences for the
approximation rate theory of numerical methods: 
boundary weighted Sobolev regularity and Besov regularity has recently been used to 
investigate the convergence rates of first order Galerkin FE discretizations 
on boundary-graded, shape-regular meshes in \cite{BorthNochGrdd23}.
The (boundary- and corner-) weighted analytic regularity proved in \cite{FMMS21} is 
the basis of \emph{exponential convergence rate bounds} for
$hp$-FEM in space dimensions $d=1,2$ \cite{FMMS22_999,FMMS-hp}.

Directions for natural extensions of the present results in three space dimensions suggest themselves:
first, the presently developed proof and the geometric structure of the weights in $\Omega$ 
should facilitate analogous weighted analytic regularity results for integral fractional diffusion such as
$(-\nabla \cdot A(x) \nabla)^s$, 
with an anisotropic diffusion coefficient $A(\cdot)$ being a uniformly
positive definite $d\times d$ matrix, again with analytic in $\overline{\Omega}$ entries.
Likewise, the exponential convergence rate bound established in \cite{FMMS-hp} in the two-dimensional
setting will generalize to the presently considered, polyhedral setting, albeit with 
rate given by $C\exp(-bN^{1/6})$, with $N$ denoting the number of the degrees of freedom of the 
$hp$-FE subspace, and with constants $b,C>0$ depending on $\Omega$, $f$ but not on $N$.
Here, the larger number of geometric situations for $\ge 3$ edges meeting in one, common vertex 
of $\partial \Omega$ will mandate significant extensions and additional technical issues as compared to
the proof in \cite{FMMS-hp}.
Details will be developed elsewhere.

\bibliography{bibliography}

\newcommand{\etalchar}[1]{$^{#1}$}
\providecommand{\bysame}{\leavevmode\hbox to3em{\hrulefill}\thinspace}
\providecommand{\MR}{\relax\ifhmode\unskip\space\fi MR }
\providecommand{\MRhref}[2]{%
  \href{http://www.ams.org/mathscinet-getitem?mr=#1}{#2}
}
\providecommand{\href}[2]{#2}
\begin{thebibliography}{BFM{\etalchar{+}}23}

\bibitem[AB17]{acosta-borthagaray17}
G.~Acosta and J.P. Borthagaray, \emph{A fractional {L}aplace equation:
  regularity of solutions and finite element approximations}, SIAM J. Numer.
  Anal. \textbf{55} (2017), no.~2, 472--495. \MR{3620141}

\bibitem[BFM{\etalchar{+}}23]{FMMS22_999}
B.~Bahr, M.~Faustmann, C.~Marcati, J.M. Melenk, and Ch. Schwab,
  \emph{{Exponential convergence of $hp-$FEM for the integral fractional
  Laplacian in 1D}}, Spectral and High Order Methods for Partial Differential
  Equations {ICOSAHOM 2020+1} (J.M. Melenk, I.~Perugia, J.~Sch{\"o}berl, and
  C.~Schwab, eds.), Lecture Notes in Computational Science and Engineering,
  vol. 137, Springer Verlag, 2023, pp.~291--306.

\bibitem[BG88]{BabGuoI}
I.~Babu\v{s}ka and B.Q. Guo, \emph{Regularity of the solution of elliptic
  problems with piecewise analytic data. {I}. {B}oundary value problems for
  linear elliptic equation of second order}, SIAM J. Math. Anal. \textbf{19}
  (1988), no.~1, 172--203. \MR{924554}

\bibitem[BLN22]{borthagaray2022fractional}
Juan~Pablo Borthagaray, Wenbo Li, and Ricardo~H. Nochetto, \emph{{Fractional
  Elliptic Problems on Lipschitz Domains: Regularity and Approximation}}, 2022.

\bibitem[BN23a]{BN21}
Juan~Pablo Borthagaray and Ricardo~H. Nochetto, \emph{Besov regularity for the
  {D}irichlet integral fractional {L}aplacian in {L}ipschitz domains}, J.
  Funct. Anal. \textbf{284} (2023), no.~6, Paper No. 109829, 33. \MR{4530901}

\bibitem[BN23b]{BorthNochGrdd23}
\bysame, \emph{Constructive approximation on graded meshes for the integral
  fractional {L}aplacian}, Constr. Approx. \textbf{57} (2023), no.~2, 463--487.
  \MR{4577390}

\bibitem[CS07]{CafSil07}
L.~Caffarelli and L.~Silvestre, \emph{An extension problem related to the
  fractional {L}aplacian}, Comm. Partial Differential Equations \textbf{32}
  (2007), no.~7-9, 1245--1260.

\bibitem[CS16]{caffarelli-stinga16}
L.A. Caffarelli and P.R. Stinga, \emph{Fractional elliptic equations,
  {C}accioppoli estimates and regularity}, Ann. Inst. H. Poincar\'{e} Anal. Non
  Lin\'{e}aire \textbf{33} (2016), no.~3, 767--807. \MR{3489634}

\bibitem[Dau88]{DaugeLNM}
Monique Dauge, \emph{Elliptic boundary value problems on corner domains},
  Lecture Notes in Mathematics, vol. 1341, Springer-Verlag, Berlin, 1988,
  Smoothness and asymptotics of solutions. \MR{961439}

\bibitem[DL54]{deny-lions55}
J.~Deny and J.~L. Lions, \emph{Les espaces du type de {B}eppo {L}evi}, Ann.
  Inst. Fourier (Grenoble) \textbf{5} (1954), 305--370. \MR{74787}

\bibitem[Eva98]{evans98}
L.C. Evans, \emph{Partial differential equations}, American Mathematical
  Society, 1998.

\bibitem[FMMS22]{FMMS21}
M.~Faustmann, C.~Marcati, J.M. Melenk, and Ch. Schwab, \emph{Weighted analytic
  regularity for the integral fractional {Laplacian} in polygons}, SIAM J.
  Math. Anal. \textbf{54} (2022), no.~6, 6323--6357.

\bibitem[FMMS23]{FMMS-hp}
M.~Faustmann, C.~Marcati, J.~Melenk, and Ch. Schwab, \emph{{Exponential
  Convergence of $hp$-FEM for the Integral Fractional Laplacian in Polygons}},
  SIAM Journ. Numer. Analysis (to appear) (2023).

\bibitem[GB97]{BabGuo3dII}
B.~Guo and I.~Babu\v{s}ka, \emph{Regularity of the solutions for elliptic
  problems on nonsmooth domains in {$\bold R^3$}. {II}. {R}egularity in
  neighbourhoods of edges}, Proc. Roy. Soc. Edinburgh Sect. A \textbf{127}
  (1997), no.~3, 517--545. \MR{1453280}

\bibitem[Gri11]{Grisvard}
P.~Grisvard, \emph{Elliptic problems in nonsmooth domains}, Classics in Applied
  Mathematics, vol.~69, Society for Industrial and Applied Mathematics (SIAM),
  Philadelphia, PA, 2011.

\bibitem[HMW13]{horger-melenk-wohlmuth13}
T.~Horger, J.M. Melenk, and B.~Wohlmuth, \emph{On optimal {$L^2$}- and surface
  flux convergence in {FEM}}, Comput. Vis. Sci. \textbf{16} (2013), no.~5,
  231--246.

\bibitem[KM19]{KarMel19}
M.~Karkulik and J.M. Melenk, \emph{{$\mathcal{H}$}-matrix approximability of
  inverses of discretizations of the fractional {L}aplacian}, Adv. Comput.
  Math. \textbf{45} (2019), no.~5-6, 2893--2919.

\bibitem[KMR97]{Kozlov1997}
V.A. Kozlov, V.G. Maz'ya, and J.~Rossmann, \emph{Elliptic boundary value
  problems in domains with point singularities}, Mathematical Surveys and
  Monographs, vol.~52, American Mathematical Society, Providence, RI, 1997.

\bibitem[Kwa17]{Kwasnicki}
M.~Kwa\'{s}nicki, \emph{Ten equivalent definitions of the fractional {L}aplace
  operator}, Fract. Calc. Appl. Anal. \textbf{20} (2017), no.~1, 7--51.

\bibitem[MR10]{MR162}
V.~Maz'ya and J.~Rossmann, \emph{Elliptic equations in polyhedral domains},
  Mathematical Surveys and Monographs, vol. 162, American Mathematical Society,
  Providence, RI, 2010. \MR{2641539}

\bibitem[MW12]{melenk-wohlmuth12}
J.M. Melenk and B.~Wohlmuth, \emph{Quasi-optimal approximation of surface based
  {L}agrange multipliers in finite element methods}, SIAM J. Numer. Anal.
  \textbf{50} (2012), no.~4, 2064--2087. \MR{3022210}

\bibitem[Sav98]{Savare}
G.~Savar\'{e}, \emph{Regularity results for elliptic equations in {L}ipschitz
  domains}, J. Funct. Anal. \textbf{152} (1998), no.~1, 176--201.

\end{thebibliography}
\bibliographystyle{amsalpha}
\appendix
\section{Localization of fractional norms}
\label{sec:AppA}
The following lemma is a slightly improved version of \cite[Lemma A.1]{FMMS21}
\begin{lemma}
\label{lemma:localization-fractional-norms}
Let $R>0$ such that $B_R\subset\Omega$, $c\in (0,1)$, $\eta \in C^\infty_0(B_{cR})$, and $s\in(0,1)$. 
Then,  
\begin{align}
\label{eq:lemma:localization-fractional-norms-10}
\|\eta f\|_{H^{-s}(\Omega)} & \leq C_{\rm loc} \|\eta\|_{L^\infty(B_{cR})} \|f\|_{L^2(B_{cR})},  \\
\label{eq:lemma:localization-fractional-norms-20}
  \|\eta f\|_{H^{1-s}(\Omega)} &
      \begin{multlined}[t][.5\textwidth]
        \leq \Cloctwo \big[\left( R^{s} \|\nabla \eta\|_{L^\infty(B_{cR})}  + (R^{s-1}+1) \|\eta\|_{L^\infty(B_{cR})}\right) \|f\|_{L^2(B_R)}  \\
        + \|\eta\|_{L^\infty(B_{cR})} |f|_{H^{1-s}(B_R)} \big], 
      \end{multlined}
\end{align}
where $C_{\rm loc}$ depends only on $\Omega$ and $s$, and $\Cloctwo$ depends additionally on $c$. 
\end{lemma}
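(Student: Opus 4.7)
The first bound \eqref{eq:lemma:localization-fractional-norms-10} will follow by a direct duality argument: since $H^{-s}(\Omega)$ is defined as the dual of $\widetilde{H}^s(\Omega)$, for any $v \in \widetilde{H}^s(\Omega)$ we have, using that $\operatorname{supp}(\eta) \subset B_{cR}$ and the continuous embedding $\widetilde{H}^s(\Omega) \hookrightarrow L^2(\Omega)$,
\begin{equation*}
  \langle \eta f, v \rangle_{L^2(\Omega)} = \int_{B_{cR}} \eta f v \, dx \leq \|\eta\|_{L^\infty(B_{cR})} \|f\|_{L^2(B_{cR})} \|v\|_{L^2(B_{cR})} \lesssim \|\eta\|_{L^\infty(B_{cR})} \|f\|_{L^2(B_{cR})} \|v\|_{\widetilde{H}^s(\Omega)}.
\end{equation*}
Taking the supremum over $v$ delivers \eqref{eq:lemma:localization-fractional-norms-10} with $C_{\rm loc}$ depending on the embedding constant (hence on $\Omega$ and $s$).

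For \eqref{eq:lemma:localization-fractional-norms-20}, I will split $\|\eta f\|_{H^{1-s}(\Omega)}^2 = \|\eta f\|_{L^2(\Omega)}^2 + |\eta f|_{H^{1-s}(\Omega)}^2$. The $L^2$ piece is immediate: $\|\eta f\|_{L^2(\Omega)} \leq \|\eta\|_{L^\infty} \|f\|_{L^2(B_{cR})}$. For the Slobodeckij seminorm I will split the difference in the standard product form
\begin{equation*}
  (\eta f)(x) - (\eta f)(y) = \eta(x)\bigl(f(x) - f(y)\bigr) + f(y)\bigl(\eta(x) - \eta(y)\bigr),
\end{equation*}
and estimate the two resulting double integrals separately. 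The first integral is bounded directly by $2\|\eta\|_{L^\infty}^2 |f|_{H^{1-s}(B_R)}^2$ after observing that $\eta(x)$ vanishes outside $B_{cR} \subset B_R$.

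The delicate piece is the second integral, which has the structure
\begin{equation*}
  I := \int_{\Omega}\int_{\Omega} \frac{|\eta(x) - \eta(y)|^2 |f(y)|^2}{|x-y|^{d+2(1-s)}}\, dy\, dx,
\end{equation*}
where, due to the compact support of $\eta$ in $B_{cR}$, effectively one of $x$ or $y$ must lie in $B_{cR}$ and the other may range over $\Omega$. The cleanest route is to first perform the change of variables $\tilde x = x/R$, $\tilde\eta(\tilde x) = \eta(R\tilde x)$, $\tilde f(\tilde x) = f(R\tilde x)$, which maps the problem to unit scale with the rescalings $\|\nabla\tilde\eta\|_{L^\infty} = R\|\nabla\eta\|_{L^\infty}$, $|\tilde\eta\tilde f|_{H^{1-s}}^2 = R^{2(1-s)-d}|\eta f|_{H^{1-s}}^2$, and so on. At unit scale, a standard product rule for fractional Sobolev seminorms yields
\begin{equation*}
  |\tilde\eta \tilde f|_{H^{1-s}(\R^d)}^2 \lesssim \|\tilde\eta\|_{L^\infty}^2 |\tilde f|_{H^{1-s}(B_1)}^2 + \bigl(\|\tilde\eta\|_{L^\infty}^2 + \|\nabla\tilde\eta\|_{L^\infty}^2\bigr) \|\tilde f\|_{L^2(B_1)}^2,
\end{equation*}
which I will establish by splitting the domain $\{|x-y| \leq 1\}$ (using the Lipschitz bound $|\eta(x)-\eta(y)| \leq \|\nabla\eta\|_{L^\infty}|x-y|$, which leaves an integrable singularity since $2-2(1-s) > 0$) from $\{|x-y| > 1\}$ (using the trivial bound $|\eta(x)-\eta(y)| \leq 2\|\eta\|_{L^\infty}$). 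Undoing the rescaling converts the three unit-scale terms into $\|\eta\|_{L^\infty}^2 |f|_{H^{1-s}(B_R)}^2$, $R^{2s-2} \|\eta\|_{L^\infty}^2 \|f\|_{L^2(B_R)}^2$, and $R^{2s} \|\nabla\eta\|_{L^\infty}^2 \|f\|_{L^2(B_R)}^2$, which, combined with the $L^2$ piece, gives exactly \eqref{eq:lemma:localization-fractional-norms-20} after taking square roots. The main technical obstacle is the careful handling of the far-field contribution in the $I$-integral, which the rescaling + split-by-distance dichotomy resolves uniformly in the geometric parameters $c$ and $R$.
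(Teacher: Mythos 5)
Your handling of \eqref{eq:lemma:localization-fractional-norms-10} matches the paper's (both are the embedding $L^2 \hookrightarrow H^{-s}$, which you make explicit by duality). For \eqref{eq:lemma:localization-fractional-norms-20} you take a genuinely different route. The paper decomposes $\Omega\times\Omega$ into four regions determined by $B_{cR}\subset B_{\tR}\subset B_R$ with $\tR=(1+c)R/2$, namely $B_{\tR}\times B_R$, $B_{\tR}\times(B_R^c\cap\Omega)$, $(B_{\tR}^c\cap\Omega)\times B_{cR}$, and $(B_{\tR}^c\cap\Omega)\times(B_{cR}^c\cap\Omega)$: in the last region the integrand vanishes, in the two mixed regions one of the cut-off values is zero so the numerator collapses to a single $|\eta f|^2$ and one only needs to integrate $|x-z|^{-d-2+2s}$ over the complement of a ball, and the product-rule triangle inequality with the Lipschitz bound on $\eta$ is used only in the inner region $B_{\tR}\times B_R$, where no far-field $f$-contribution can appear. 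Your plan instead rescales to unit length and splits by $|x-y|$ small versus large, which collapses the far/near dichotomy into a single step and makes the $R$-powers fall out mechanically; it is more compact and arguably more transparent.

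Two points need repair for the plan to close. First, the clause that the integral $\int\int \eta(x)^2|f(x)-f(y)|^2/|x-y|^{d+2-2s}\,dy\,dx$ is ``bounded directly by $2\|\eta\|_{L^\infty}^2|f|_{H^{1-s}(B_R)}^2$'' is not correct as written, because the $y$-variable ranges over all of $\Omega$ and not over $B_R$, so a far-field piece survives; in your actual rescaled argument this piece is absorbed into the $\|\tilde f\|_{L^2}$-term by the distance split, so the clause should simply be dropped rather than used. Second, and more substantively, the unit-scale threshold $\{|x-y|\leq 1\}$ does not keep the near-field variables inside $B_1$: with $\tilde\eta$ supported in $B_c$ and $|x-y|\leq 1$, the partner variable can reach $B_{1+c}$, and undoing the rescaling then produces $|f|_{H^{1-s}(B_{(1+c)R})}$ and $\|f\|_{L^2(B_{(1+c)R})}$, where $B_{(1+c)R}$ need not lie in $\Omega$. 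Choose a threshold at most $1-c$; then $\{|x-y|\le 1-c\}$ together with $\operatorname{supp}\tilde\eta\subset B_c$ forces both variables into $B_1$, undoing the rescaling produces precisely $B_R$, and the $c$-dependence enters the constant $\Cloctwo$ exactly as asserted.
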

\begin{proof}
\eqref{eq:lemma:localization-fractional-norms-10} 
follows directly from the embedding $L^2\subset H^{-s}$. 
For 
\eqref{eq:lemma:localization-fractional-norms-20}, 
we start from the definition of the Slobodecki semi-norm 
\begin{equation*}
\abs{\eta f}_{H^{1-s}(\Omega)}^2 
= 
\int_{\Omega} \int_{\Omega} \frac{|\eta(x)f(x) - \eta(z)f(z)|^2}{\abs{x-z}^{d+2-2s}}\,dz\,dx
\end{equation*}

We denote the intermediate radius between $R$ and $cR$ as $\tR = \frac{1+c}{2}R$
and write $\tc = \frac{1-c}{2}$ so that $R-\tR = \tR -cR = \tc R$.
We split the integration over $\Omega \times \Omega$ into four subsets, 
\begin{itemize}
\item $B_{\tR} \times B_{R}$, 
\item $B_{\tR}\times B_{R}^c \cap\Omega$, 
\item $B_{\tR}^c \cap\Omega \times B_{cR}$, 
\item $B_{\tR}^c \cap\Omega \times B_{cR}^c \cap\Omega$.
\end{itemize}

For the last case, i.e., for all
$(x, z)\in B_{\tR}^c \cap\Omega \times B_{cR}^c \cap\Omega$, 
we have that $\eta(x)=\eta(z) = 0$ and the integral is zero.
Then, for all
$(x, z) \in B_{\tR}\times B_{R}^c\cap \Omega$, we have $\abs{x-z} \geq \tc R$. 
Hence, using polar coordinates centered at $x$,
\begin{align*}
  &\int_{B_{\tR}} \int_{B_R^c\cap\Omega} \frac{|\eta(x)f(x) - \eta(z)f(z)|^2}{\abs{x-z}^{d+2-2s}}\,dz\,dx = \int_{B_{\tR}} \int_{B_{R}^c\cap\Omega} \frac{|\eta(x)f(x)|^2}{\abs{x-z}^{d+2-2s}}\,dz\,dx \\
  &\qquad\quad\leq \int_{B_{\tR}}\abs{\eta(x)f(x)}^2 \int_{B_R^c}\frac{1}{\abs{x-z}^{d+2-2s}}dz\, dx \lesssim
    \int_{B_{\tR}}\abs{\eta(x)f(x)}^2 \int_{\tc R}^\infty r^{-3+2s}dr\, dx \\
  &\qquad\quad\lesssim (\tc R)^{-2+2s} \norm{\eta}_{L^\infty(B_{cR})}^2 \int_{B_{\tR}}\abs{f(x)}^2  dx \lesssim R^{-2+2s} \norm{\eta}_{L^\infty(B_{cR})}^2 \norm{f}_{L^2(B_{\tR})}^2.
\end{align*}
For the  integration over $B_{\tR}^c\cap \Omega \times B_{cR}$, we write using polar coordinates (centered at $z$)
\begin{align*}
&\int_{B_{\tR}^c\cap \Omega} \int_{B_{cR}} \frac{|\eta(z)f(z)|^2}{\abs{x-z}^{d+2-2s}}\,dz\,dx 
  = 
  \int_{B_{cR}}|\eta(z)f(z)|^2 \int_{B_{\tR}^c\cap\Omega} \frac{1}{\abs{x-z}^{d+2-2s}}\,dx\,dz \\
&\qquad\quad 
  \lesssim \int_{B_{cR}}|\eta(z)f(z)|^2 \int_{\tc R}^\infty \frac{1}{r^{3-2s}}\,dr\,dz 
  \lesssim R^{2s-2}\norm{\eta}_{L^\infty(B_{cR})}^2 \norm{f}_{L^2(B_{cR})}^2.
\end{align*}
Finally, for the integration over $B_{\tR} \times B_{R}$, we use the triangle
inequality
\begin{align*}
  & \int_{B_{\tR}} \int_{B_{R}} \frac{|\eta(x)f(x) - \eta(z)f(z)|^2}{\abs{x-z}^{d+2-2s}}\,dz\,dx
  \\
  &\qquad\quad  \lesssim \int_{B_{\tR}} \int_{B_{R}} \frac{|\eta(x)f(x) - \eta(x)f(z)|^2}{\abs{x-z}^{d+2-2s}} \,dz\,dx 
    + \int_{B_{\tR}} \int_{B_{R}} \frac{|\eta(x)f(z) - \eta(z)f(z)|^2}{\abs{x-z}^{d+2-2s}}\,dz\,dx\\
  &\qquad \quad \eqqcolon (I)+ (II)
\end{align*}
We have
\begin{align*}
  (I) &\leq \| \eta \|_{L^\infty(B_{cR})} \int_{B_{\tR}} \int_{B_{R}} \frac{|f(x) - f(z)|^2}{\abs{x-z}^{d+2-2s}} \,dz\,dx \leq \| \eta \|_{L^\infty(B_{cR})} |f|_{H^{1-s}(B_{R})}.
\end{align*}
Since $\abs{\eta(x)-\eta(z)} 
\leq \norm{\nabla \eta}_{L^\infty(B_{cR})} \abs{x-z}$ and using polar coordinates (centered at $z$) we estimate
\begin{align*}
(II)
&  \leq  \norm{\nabla \eta}_{L^\infty(B_{cR})}^2\int_{B_{R}} |f(z)|^2 \int_{B_{\tR}} \frac{1}{\abs{x-z}^{d-2s}}\,dx\,dz \\
&\lesssim  \norm{\nabla \eta}_{L^\infty(B_{cR})}^2\int_{B_{R}} |f(z)|^2 \int_{0}^{2R} r^{-1+2s}\,dr\,dz \lesssim  \norm{\nabla \eta}_{L^\infty(B_{cR})}^2  \norm{f}_{L^2(B_{R})}^2 R^{2s}.
\end{align*}
The straightforward bound $\|\eta f\|_{L^2(\Omega)}\leq \|\eta\|_{L^\infty(B_{cR})} \| f\|_{L^2(B_{cR})}$
concludes the proof.
\end{proof}
\end{document}